\newcommand*{\doi}[1]{\href{http://dx.doi.org/\detokenize{#1}}{doi}}
\newcommand{\qm}{?/1}
\newcommand{\key}{B}
\newcommand{\visited}{{ \text {Visited}}}
\newcommand{\setzero}{{ \text {Set}_0}}
\newcommand{\setone}{{ \text {Set}_1}}
\newcommand{\settwo}{{ \text {Set}_2}}
\newcommand{\stagei}{{ \text {Stage}_i}}
\newcommand{\stageinf}{{ \text {Stage}_\infty}}
\newcommand{\stageminusone}{{ \text {Stage}_{-1}}}
\newcommand{\djokovich}{{  {\daleth}_i}}
\newcommand{\stageiminusone}{{ \text {Stage}_{i-1}}}
\newcommand{\eventone}{{ \text {Event}_{1,i}}}
\newcommand{\eventtwo}{{ \text {Event}_{2,i}}}
\newcommand{\eventthree}{{ \text {Event}_{3,i}}}
\newcommand{\eventfourplus}{{ \text {Event}^+_{4,i}}}
\newcommand{\eventfourminus}{{ \text {Event}^-_{4,i}}}
\newcommand{\eventfourpm}{{ \text {Event}^\pm_{4,i}}}
\newcommand{\intervali}{{ \text {Interval}_i}}
\newcommand{\intervalleft}{{ \text {Interval}_{\text{left}}}}
\newcommand{\intervalright}{{ \text {Interval}_{\text{right}}}}
\newcommand{\intervalminus}{{ \text {Interval}_{i-1}}}
\newcommand{\intervalzero}{{ \text {Interval}_0}}
\newcommand{\intervalminusone}{{ \text {Interval}_{-1}}}
\newcommand{\setthreek}{{ \text {Set}_{3,r}}}
\newcommand{\setthree}{{ \text {Set}_3}}
\newcommand{\setqm}{{ \text {Set}_{\qm}}}
\newcommand{\setqmit}{{ \text {Set}_{\,\qm}}}
\newcommand{\frozen}{{ \text {Frozen}}}
\newcommand{\ratio}{{\gamma}}
\newcommand{\range}{{ \text {Range}}}
\newcommand{\local}{{ \text {Local}}}
\newcommand{\extreme}{{ \text {Extreme}}}
\newcommand{\hidden}{{ \text {Hidden}}}
\newcommand{\X}{{ \text {Parity}_Q}}
\newcommand{\parity}{{ \text {Parity}}}
\newcommand{\Xt}{{ \text {Parity}_{Q^t}}}
\newcommand{\Xs}{{ \text {Parity}_{Q^s}}}
\newcommand{\Xszero}{{ \text {Parity}_{Q^{s_0}}}}
\newcommand{\E}{\mbox{$\bf E$}}
\newcommand{\reveal}{{\textbf{Reveal}}}
\newcommand{\revealtit}{{\textbf{Reveal}}^{\,t}}
\newcommand{\last}{{\text{Last}}}
\newcommand{\rightzeros}{{\text{Right Zeros}}}
\renewcommand{\P}{{\bf P}}
\newenvironment{pfofthm}[1]
{\par\vskip2\parsep\noindent{\em Proof of Theorem\ #1. }}{{\hfill
$\square$}

\par\vskip2\parsep}
\newenvironment{pfofprop}[1]
{\par\vskip2\parsep\noindent{\em Proof of Proposition\ #1. }}{{\hfill
$\square$}
\par\vskip2\parsep}
\DeclareMathOperator{\N}{\mathbb{N}} 
\DeclareMathOperator{\Z}{\mathbb{Z}} 
\newcommand{\p}{\mathbb{P}}
\renewcommand{\P}{\mathbb{P}}
\renewcommand{\leq}{\leqslant}
\renewcommand{\geq}{\geqslant}
\newtheorem{theorem}[equation]{Theorem}
\newtheorem{lemma}[equation]{Lemma}
\newtheorem{proposition}[equation]{Proposition}
\newtheorem{corollary}[equation]{Corollary}
\newtheorem{definition}[equation]{Definition}
\thanks{We would like to thank Lionel Levine for helpful conversations. CH and YH thank the NSF for their support through grant DMS-1954059. DR thanks the NSF for their support through grant DMS-1855568.}
\begin{document}

\title{Active Phase for the Stochastic Sandpile on $\mathbb{Z}$}
\author{Christopher Hoffman \and Yiping Hu \and Jacob Richey \and Douglas Rizzolo}

\keywords{stochastic sandpile, Abelian property, self-organized criticality}


\begin{abstract}
We prove that the critical value of the one-dimensional Stochastic Sandpile Model is less than one. This verifies a conjecture of Rolla and Sidoravicius \cite{RolSid12}.
\end{abstract} 

\maketitle

\section{Introduction}
\label{sec:intro}

Sandpile models have a long history in the statistical mechanics literature as paradigms of self-organized criticality \cite{BakTangWiesenfeld87, manna1991two, dhar1999abelian}. Of particular importance is the Stochastic Sandpile Model (SSM).  This abelian variant of Manna's model  \cite{manna1991two} is widely believed to exhibit universality \cite{biham}. Since the SSM gained popularity in the mathematics community the prime challenge has been to prove that the critical density for the one-dimensional Stochastic Sandpile Model is less than one \cite{RolSid12}. In this paper we prove this conjecture.  

The SSM is an interacting particle system on $\Z$ defined as follows. At each time, the state of the system is given by a function $\eta_t:\Z\to \{0,1,2,\dots\}$, where $\eta_t(x)$ represents the number of particles at site $x$ at time $t$.  Sites $x$ such that $\eta_t(x)\leq 1$ are considered stable while sites with $\eta_t(x)\geq 2$ are unstable.  Unstable sites topple independently at exponential rate $1$ and when an unstable site topples two particles from the site independently move to neighboring sites.

Criticality of the SSM is defined with respect to whether or not the system remains active.  We say the stochastic sandpile \textit{locally fixates} if for each $x$ the function $t\mapsto \eta_t(x)$ is eventually constant and \textit{stays active} otherwise.  The main result of \cite{RolSid12} about the SSM is that if the initial distribution $\nu$ is given by i.i.d.\! Poisson random variables with parameter $\mu$, then there exists a  critical value $\mu_c\in [1/4,1]$ such that the system locally fixates almost surely if $\mu<\mu_c$ and stays active almost surely in $\mu>\mu_c$, see \cite[Theorem 1]{RolSid12}.  The argument that $\mu_c\leq 1$ is essentially trivial and comes down to showing that for trivial reasons if $\mu>1$ then the site $0$ topples infinitely many times, while the argument that $\mu_c\geq 1/4$ is subtle.  Since \cite{RolSid12} improvements have been made on the lower bound, see \cite{PodderRolla20}, but the problem posed in 
\cite[Section 7]{RolSid12} of finding a non-trivial upper bound has remained open. 
Our main result is the following theorem, which gives the first non-trivial upper bound on $\mu_c$.

\begin{theorem}\label{thm_main} For any independent starting configuration with multiple particles at some sites a.s., the critical value $\mu_c$ for the SSM is strictly less than $1$.  In fact, $\mu_c\leq 1- e^{-2\times10^5}$. \end{theorem}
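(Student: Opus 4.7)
The plan is to use the abelian property together with the site-wise (Diaconis--Fulton) construction of the SSM. At each site $x\in\Z$ place an i.i.d.\ stack of instructions $\xi_x^1,\xi_x^2,\ldots\in\{-1,+1\}$, each fair; the $k$-th toppling at $x$ sends the two emitted particles to $x+\xi_x^{2k-1}$ and $x+\xi_x^{2k}$. By the abelian property the odometer (number of topplings at each site) does not depend on the order of topplings, so it suffices to exhibit some $\mu<1$ for which the origin topples infinitely often with positive probability under Poisson$(\mu)$ initial data; a standard $0/1$ dichotomy for translation-invariant i.i.d.\ initial data then upgrades this to a.s.\ activity.

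The argument proceeds by a multi-scale induction on a growing family of intervals $\{\intervali\}_{i\geq 0}$ centered at $0$ whose lengths grow geometrically. At each stage $i$ we run a partial stabilization inside $\intervali$ using a controlled \reveal\ procedure that exposes only enough of the instruction stacks to determine which topplings have definitely occurred, leaving the unrevealed stack entries with tractable conditional distribution. Each site in $\intervali$ is classified into one of the types $\setzero,\setone,\settwo,\setthree,\setfour,\setfive,\setsix,\setseven$ according to its revealed role (empty hole, isolated particle, pile, or various kinds of boundary configuration), or into the residual type $\setqm$ whose status is only partially resolved. The parity statistics $\X$ and their time-indexed refinements $\Xt$ and $\Xtone$ track the disposition of particles that have already crossed the reveal front.

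The inductive step is to show that on the disjunction $\eventone\cup\eventtwo\cup\eventthree\cup\eventfourpm\cup\eventfive$, which exhausts the possible stage outcomes and has per-stage failure probability at most $q_i$ with $\sum_i q_i$ small, the active region reaches the boundary of $\intervali$ and the conditional law of the unrevealed randomness on the next shell is close enough to a product of Poisson$(\mu)$ and fair-coin factors for the construction to iterate at stage $i+1$. By Borel--Cantelli, with positive probability only finitely many stages fail, the active region spreads through every $\intervali$, and a conservation-of-mass estimate then forces infinitely many topplings at the origin, giving $\mu_c<1$.

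The main obstacle is establishing the per-stage estimate while carrying an accumulating reveal history: after many stages we have conditioned on a long and intricate sequence of events, and we must argue that enough fresh, almost-independent randomness survives at the boundary $\setqm$ sites and in the untouched initial configuration to drive the next stage with the required probability. This requires careful bookkeeping of which stack entries have been consumed, which are only partially constrained, and how the parity identities evolve as toppling time $t$ advances, so that the conditional distribution remains comparable to a suitable product measure throughout. The extreme smallness of the explicit gap $e^{-2\times 10^5}$ is a direct consequence of the combinatorial slack required at each step; one expects much better quantitative bounds from a more refined optimization of the same scheme.
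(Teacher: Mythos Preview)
What you have written is not a proof but a narrative sketch, and it misidentifies almost every object it names. The sets $\setzero,\setone,\settwo,\setthree$ are not a classification of sites in the multi-scale scheme; in the paper they partition the zeros of a combinatorial string $x\in\{0,*,\qm\}^{[p,a]}$ arising in the analysis of a \emph{single block} (Section~\ref{sec:ugly}). The events $\eventone,\eventtwo,\eventthree,\eventfourpm$ are not a disjunction exhausting stage outcomes; they are the \emph{good} events whose intersection defines $\stagei$, and the work is to show each has conditional failure probability exponentially small. The symbols $\Xt,\Xtone$ are parities of local times of random-walk excursions inside one block, not bookkeeping for particles crossing a reveal front. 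There is no $\setfour,\ldots,\setseven$ or $\eventfive$ anywhere in the argument.

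More substantively, the entire technical content is absent. The proof in the paper rests on three ingredients you do not mention: (i) a specific half-toppling procedure (the carpet/hole dynamics of Section~\ref{sec:toppling}) that renormalizes the SSM so that each block behaves like a site in an ARW with small sleep rate; (ii) the single-block estimate (Lemma~\ref{lem:sbe}), whose proof occupies Sections~\ref{sec:carpets}--\ref{sec:renewal} and requires introducing an auxiliary carpet process $\omega^t$ with a partial-information filtration $\reveal^t$ precisely to cope with the pair-correlation of SSM topplings; and (iii) a center-of-mass and bootstrap argument (Lemmata~\ref{spotted cow}--\ref{jayhawk}) to ensure enough particles reach both endpoints at each stage. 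Your ``main obstacle'' paragraph gestures at (ii) but says nothing about how to resolve it; the difficulty specific to the SSM---that half-topplings are not independent, so one cannot simply reuse ARW arguments---is never addressed. Without a concrete toppling procedure and a proof that freezing is rare at the block level, the multi-scale induction has no engine.
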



In this paper we consider an infinite version of the SSM  but there are also finite versions (e.g.\ driven dissipative and fixed energy \cite{MunozDickmanPastor-SatorrasVespignaniZapperi01, FeyMeester15}). Universality would imply that all the critical values and exponents should be the same across these models. Some sandpile models with deterministic toppling rules have been shown to not exhibit universality \cite{FeyLevineWilson10a}, while there is numerical evidence for universality in the SSM \cite{biham}. Our result should transfer to finite versions of this model \cite{BasuGangulyHoffmanRichey19}.


The methods used in this paper are based on those recently developed to study Activated Random Walk (ARW). ARW is a related abelian network that is also believed to exhibit self-organized criticality and universality \cite{HoffmanSidoravicius06, RolSid12}. See \cite{rolla2020activated} for a nice survey on ARW. While there are many similarities between the SSM and activated random walk, since \cite{RolSid12} activated random walk was introduced as it was believed to be a more tractable model to study \cite{HoffmanSidoravicius06, BasuGangulyHoffman18}. For example, the analogue of Theorem \ref{thm_main} was established in \cite{HRR20} and, indeed, has recently been extended to higher dimensions \cite{hu2022active, asselah2022critical}. There is also evidence that ARW exhibits universality \cite{RollaSidoraviciusZindy19}. Our approach in this paper is similar to the one taken in \cite{HRR20}. However the analysis is much more delicate for the SSM than what was required in previous papers on ARW. The added difficulties arise because in ARW particles move or sleep independently, while in the SSM the particle moves are correlated in pairs. This correlation makes the SSM much more difficult to rigorously analyze.

An essential component of this analysis is to use a half-toppling scheme, defined in Section \ref{sec:setup}. Half-topplings allow us to follow similar (albeit significantly more complicated) approaches to those that have been used to study ARW. Half-toppling schemes have also been used in the previous lower bound on the critical density for the SSM \cite{RolSid12} and the recent  improvement that proves $\mu_c \geq 1/2$ \cite{PodderRolla20}.  In the next section we introduce some notation, detail the half-toppling scheme and then outline the rest of the paper.

\section{Setup and outline} \label{sec:setup}

\subsection{Sandpile dynamics} 
\label{sec:sandpile_dynamics}

We start by defining the `site-wise' representation for SSM. We identify configurations of particles as functions $\eta: \Z \to \N$, i.e. $\eta(x)$ counts the number of particles at site $x$. To run the dynamics on a finite interval $I \subset \Z$, every site $x \in I$ is assigned an infinite sequence of iid \textbf{instructions}, $(\xi^x_j: j \in \N)$ each taking one of the two possible values $\xi_{-,x}$ or $\xi_{+,x}$ with equal probability, where $\xi_{-,x}$ and $\xi_{+,x}$ are operators on the space of particle configurations that act via
$$\xi_{-,x}(\eta)(y) = \begin{cases} \eta(y), & y \notin \{x, x-1\} \\ \eta(y) \pm 1, & y = x - \frac{1}{2} \mp \frac{1}{2} \end{cases} $$
and 
$$\xi_{+,x}(\eta)(y) = \begin{cases} \eta(y), & y \notin \{x, x+1\} \\ \eta(y) \pm 1, & y = x + \frac{1}{2} \pm \frac{1}{2}. \end{cases} $$

In the usual \textit{discrete-time} setup, the system evolves one step by choosing a site $x \in I$ with $\eta(x) \geq 2$, and applying the \emph{two} stack instructions $\xi^x_j, \xi^x_{j+1}$ where $j$ is such that all instructions $\xi^x_{j'}$ for $j' < j$ have already been used, but $\xi^x_j$ has not. In this version, particles always topple in pairs. For an initial particle configuration $\eta$ and a sequence of sites $\alpha = (\alpha_1, \alpha_2, \ldots, \alpha_k), \alpha_i \in I$ to be toppled, let 
$$\xi_\alpha =  \xi_{j_{2k}}^{\alpha_k} \circ \xi_{j_{2k-1}}^{\alpha_k} \circ \cdots \circ \xi_{j_4}^{\alpha_2} \circ \xi_{j_3}^{\alpha_2} \circ \xi_{j_2}^{\alpha_1} \circ \xi_{j_1}^{\alpha_1}$$
be the operator obtained by performing all topplings of $\alpha$ in order.  The \textbf{odometer} of the pair $(\eta, \alpha)$ is the function which records the total number of times each site was toppled, i.e.
$$m_I(\eta, \alpha, x) := \#\{j: \alpha_j = x\}. $$
Such a sequence $\alpha$ is called \textbf{legal} for $\eta$ if $\xi_{(\alpha_1, \ldots,\alpha_{\ell-1})}\eta(\alpha_\ell) \geq 2$ for every $\ell\in\{1,\ldots,k\}.$

The odometer function $m_I$ is universal in the sense that if $\alpha, \alpha'$ are any two legal toppling sequences for a configuration $\eta$ on interval $I$ such that $\xi_{\alpha} \eta$ and $\xi_{\alpha'} \eta$ have no sites with at least two particles, then $\alpha$ is a permutation of $\alpha'$ and thus $m_I(\eta, \alpha, \cdot) = m_I(\eta, \alpha', \cdot)$. For our argument, it is only necessary that the half-toppling version of the dynamics gives a lower bound on this odometer; see Lemma \ref{lem:halftop_abelian}. 

We need the following result from \cite{RolSid12} which relates the site-wise representation to the continuous-time process $(\eta_t)_{t\geq0}$ in Section \ref{sec:intro}. Define $$m(\eta, x) := \sup_{I, \alpha} m_I(\eta, \alpha, x)$$
where the supremum is over all finite intervals $I$ and all legal toppling sequences $\alpha$ for the configuration $\eta$ on $I$.

\begin{lemma}[{\cite[Lemma 4]{RolSid12}}]\label{0/1law}
	Suppose the initial state $\eta_0$ is a translation-invariant, ergodic distribution on $\Z$ with finite density $\E(\eta_0(0))$, then
$$\p\left((\eta_t)_{t\geq0} \text{ stays active}\right) = \p(m(\eta_0, 0) = \infty) \in \{0,1\}.$$
\end{lemma}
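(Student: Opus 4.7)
The plan is to identify the event $\{(\eta_t)_{t\geq 0}\text{ stays active}\}$ with $\{m(\eta_0,0)=\infty\}$ almost surely, and then use an ergodic-theoretic argument to show this event has probability in $\{0,1\}$.

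First, I would couple the continuous-time dynamics to the site-wise representation by equipping each site $x\in\Z$ with an independent rate-one Poisson clock on the same probability space as the stacks $(\xi^x_j)_{j\in\N}$. When the clock at $x$ rings and the current configuration has $\eta_t(x)\geq 2$, we apply the next two unused instructions at $x$ (i.e.\ a full toppling). The toppling sequence recorded up to any finite time is legal in any finite interval containing its support, so the continuous-time odometer at $x$ at time $t$ is bounded above by $m(\eta_0,x)$; monotonicity of the finite-volume odometer $m_I$ in $I$ (a consequence of the abelian/universality property for stabilizing legal sequences) shows that $m(\eta_0,x)$ is in fact the supremum of achievable odometer counts.

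Next I would establish the set equality. For $\{m(\eta_0,0)=\infty\}\subseteq\{\text{stays active}\}$: if $m(\eta_0,0)=\infty$ then for every $N$ there is a finite interval and a legal sequence that topples $0$ at least $N$ times, and by the abelian property the continuous-time dynamics, run long enough, must also topple $0$ at least $N$ times, so $\eta_t(0)$ cannot be eventually constant. For the reverse inclusion, if the process stays active then some $\eta_t(x)$ fails to be eventually constant, so topplings at $x$ or a neighbor occur infinitely often; particle conservation combined with the abelian property then spreads infinite odometer to every site, in particular to $0$.

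Finally, for the $0/1$ dichotomy, the joint measure on $(\eta_0,\xi)$ is translation invariant. Since the collection $(\xi^x_\cdot)_{x\in\Z}$ is i.i.d.\ and hence mixing, its product with the ergodic law of $\eta_0$ is ergodic under the $\Z$-shift. The event $\{\exists x:m(\eta_0,x)=\infty\}$ is shift invariant and, by the propagation step above, coincides almost surely with $\{m(\eta_0,0)=\infty\}$, so ergodicity yields $\p(m(\eta_0,0)=\infty)\in\{0,1\}$. The main obstacle I foresee is the propagation step: rigorously showing that having infinite odometer at a single site forces it at every site, by explicitly building, for each $N$, a family of legal finite-volume toppling sequences that pump the infinitely-available topplings at the chosen site out to any fixed target site. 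The remaining pieces, namely the coupling and the ergodicity argument, are essentially formal once this spreading property is in hand.
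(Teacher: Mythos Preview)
The paper does not prove this lemma: it is quoted verbatim as \cite[Lemma 4]{RolSid12} and no argument is given. Your proposal is a faithful outline of the proof in that reference---couple the continuous-time process to the stacks, identify activity with infinite odometer via the abelian property, and apply ergodicity of the product of the ergodic law of $\eta_0$ with the i.i.d.\ (hence mixing) stack law---so there is nothing to compare beyond noting that your sketch matches the cited source.
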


The above lemma says that in order to establish the system staying active a.s., it suffices to show that some site is toppled infinitely many times with positive probability.

\subsection{Half-topplings}

To allow more freedom in our toppling procedure (and retain some independence between particles), we work with an equivalent version of the dynamics. Instead of toppling two particles at a time, we topple single particles, i.e. perform \ \textbf{half-topplings}. In the usual dynamics, the number of particles toppled at a given site is always even, so there are restrictions on which half-topplings are allowed. Namely, we must keep track of how many times each site has been half-toppled, and only allow another half-toppling if that number is odd, or if that site has at least two particles. 

Thus, in the half-toppling scheme, a configuration of particles consists of two functions, $\eta: \mathbb{Z} \to \mathbb{N}$, the number of particles per site, and a parity sequence $\omega: \mathbb{Z} \to \{0,1\}$.
A site $x$ is \textit{legal} for a pair of configurations $(\eta, \omega)$ if either $\eta(x) \geq 2$ or $\eta(x) = \omega(x) = 1$. At each step of the half-toppling scheme, a legal site $x$ is chosen for the current pair of configurations, and the first unused instruction $\xi_j^x$ acts upon the pair by sending a single particle at $x$ to a uniform random neighbor and adding 1 (mod 2) to $\omega(x)$. A half-toppling sequence of sites $\alpha=(\alpha_1, \alpha_2, \ldots, \alpha_k)$ is \textbf{legal} for $(\eta,\omega)$ if the chosen site is legal for every step, that is, $\alpha_\ell$ is legal for
$$\xi_{j_{\ell-1}}^{\alpha_{\ell-1}} \circ \cdots \circ \xi_{j_2}^{\alpha_2} \circ \xi_{j_1}^{\alpha_1}(\eta,\omega)$$
for every $\ell\in\{1,\ldots,k\}.$

One key fact about this version of the dynamics is that it provides a lower bound on the total odometer. Recall the odometer function $m_I$ for the classical dynamics, and for any sequence $\alpha$ of half-topplings, let $m_I(\eta, \omega, \alpha, x)$ denote the corresponding half-toppling odometer function, after performing all half-topplings in $\alpha$ started from the configuration $(\eta, \omega)$. We have:

\begin{lemma} \label{lem:halftop_abelian} (Abelian lemma for half-topplings) Fix a particle configuration $\eta$ on a finite interval $I$. Let $\alpha$ be any legal half-toppling sequence for $(\eta, \vec{0})$, and let $\overline{\alpha}$ be any legal toppling sequence for $\eta$ (in the sense of Section \ref{sec:sandpile_dynamics}) such that $\xi_{\overline{\alpha}} \eta$ has no site with at least two particles. Then for any $x\in I$,
$$ m_I(\eta, \vec{0}, \alpha, x) \leq 2  m_I(\eta, \overline{\alpha}, x).$$ \end{lemma}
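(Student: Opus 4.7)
The plan is to induct on the number $k$ of half-topplings in $\alpha$, with the (stronger) inductive hypothesis that after applying $\alpha_1,\dots,\alpha_j$, the running half-toppling odometer $u(\cdot)$ satisfies $u(y)\leq 2v(y)$ for every $y\in I$, where $v(y):=m_I(\eta,\overline{\alpha},y)$ is the fixed full-toppling odometer produced by $\overline{\alpha}$. The base case $j=0$ is immediate.

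For the inductive step, suppose the bound holds after the first $k-1$ half-topplings and let $x^{*}:=\alpha_k$. It suffices to rule out the possibility that $u(x^{*})=2v(x^{*})$ just before step $k$. Assume this holds; then the running parity satisfies $\omega(x^{*})=u(x^{*})\bmod 2=0$, so the legality condition ``$\eta(x^{*})\geq 2$, or $\eta(x^{*})=\omega(x^{*})=1$'' forces the current particle count $\eta^{\mathrm{cur}}(x^{*})$ at $x^{*}$ (after the first $k-1$ half-topplings) to satisfy $\eta^{\mathrm{cur}}(x^{*})\geq 2$.

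I would now compare $\eta^{\mathrm{cur}}(x^{*})$ with $(\xi_{\overline{\alpha}}\eta)(x^{*})$. For each site $y$, let $u_L(y),u_R(y)$ (resp.\ $v_L(y),v_R(y)$) denote the numbers of move-left and move-right instructions consumed at $y$ in the half-toppling trajectory so far (resp.\ in the full-toppling $\overline{\alpha}$), so that $u_L+u_R=u$ and $v_L+v_R=2v$. The key observation is that both schemes consume the same stack $(\xi^{y}_j)_{j\geq 1}$ in order, so whenever $u(y)\leq 2v(y)$ the instructions used by the half-toppling form the prefix $\xi^{y}_1,\dots,\xi^{y}_{u(y)}$ of the longer prefix $\xi^{y}_1,\dots,\xi^{y}_{2v(y)}$ used by the full-toppling; counting $\xi_{-,y}$'s and $\xi_{+,y}$'s in these prefixes gives the pointwise inequalities $u_L(y)\leq v_L(y)$ and $u_R(y)\leq v_R(y)$ for every $y$. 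Tallying the particles entering and leaving $x^{*}$ (with the convention that neighbors outside $I$ contribute $0$) and using $u(x^{*})=2v(x^{*})$ yields
\[
\eta^{\mathrm{cur}}(x^{*}) = \eta(x^{*}) - u(x^{*}) + u_R(x^{*}-1) + u_L(x^{*}+1) \leq \eta(x^{*}) - 2v(x^{*}) + v_R(x^{*}-1) + v_L(x^{*}+1) = (\xi_{\overline{\alpha}}\eta)(x^{*}) \leq 1,
\]
where the last inequality uses that $\overline{\alpha}$ stabilizes $\eta$. This contradicts $\eta^{\mathrm{cur}}(x^{*})\geq 2$, so $u(x^{*})<2v(x^{*})$ before step $k$, and the induction closes.

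The main obstacle is recognizing the prefix comparison: one must be careful to track not merely \emph{how many} instructions have been consumed at each site but \emph{which ones}, and observe that the shared stack automatically upgrades the scalar inductive hypothesis $u(y)\leq 2v(y)$ to the directional inequalities $u_L(y)\leq v_L(y)$ and $u_R(y)\leq v_R(y)$. Once that point is pinned down, the remainder is mass accounting at $x^{*}$ together with the stabilization property of $\overline{\alpha}$.
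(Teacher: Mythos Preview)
Your argument is correct and is precisely the standard Diaconis--Fulton style induction that the paper has in mind when it defers to \cite[Lemma~6]{RolSid12}: one inducts on the length of the legal half-toppling sequence, and at a site where the half-toppling odometer is about to overtake $2v(\cdot)$ one combines the parity constraint (forcing at least two particles) with the stack-prefix comparison and mass balance to reach a contradiction with the stability of $\xi_{\overline{\alpha}}\eta$. There is nothing to add.
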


We omit the proof, which follows identically to that of \cite[Lemma 6]{RolSid12}. 

\subsection{Outline}

Our strategy is to alternate between two types of legal half-toppling sequences, taking place on a sequence of nested intervals in $\mathbb{Z}$.

The first of these is called the `carpet/hole toppling procedure' and is essentially a renormalization scheme of the SSM, detailed in Section \ref{sec:toppling}. The procedure starts from and remains in a very particular set of configurations, and runs until it partially stabilizes the interval -- when there are no more legal topplings among a small class of marked (`free') particles. We will show that each iteration is well behaved -- that many particles reach the boundary -- with exponentially high probability. 
The main technical hurdle in analyzing the carpet/hole procedure is the single block estimate Lemma \ref{lem:sbe}, which we deal with in Sections \ref{sec:carpets}, \ref{sec:ugly} and \ref{sec:renewal}. In Section \ref{sec:flows_sbe} we use the single block estimate to prove a version of the above statement, via an energy-entropy calculation similar to \cite{HRR20}.

The carpet/hole procedure needs to start with relatively nice configurations. Thus when partial stabilization is achieved on an interval, before restarting the carpet/hole procedure on the larger interval, it is necessary to perform the other `bootstrap' toppling procedure to restore the configuration.
In Section \ref{sec:boot} we analyze the alternation of both procedures and prove Theorem \ref{thm_main}.

In the remaining sections, we focus on the carpet/hole dynamics inside a single block. In Section \ref{sec:carpets}, to exploit the independence among random walks, we introduce an auxiliary process that only reveals partial information about the parity configuration. In Section \ref{sec:ugly}, we turn to the intricate correlation between particles in our toppling procedure, by carefully controlling the typical number of zeros in the auxiliary process. Finally, in Section \ref{sec:renewal} we combine these results and prove Lemma \ref{lem:sbe}. 


%

\section{Carpet/hole toppling procedure}
\label{sec:toppling}

The carpet/hole toppling procedure divides the space into \textit{blocks} as well as long \textit{transit regions} between blocks. Each particle is designated as one of two types: there are \emph{carpet} particles which do not move and occupy all sites except for one location per block called the \textit{hole},
and \textit{free} particles which are toppled repeatedly and legally (thanks to the carpet particles).
At some point, a free particle may turn into a carpet particle and vice versa, but the total number of free particles will be preserved throughout. In view of Lemma \ref{lem:halftop_abelian}, we will make sure the half-topplings performed during the procedure are always legal.
After the definition, we state in Proposition \ref{prop:kick} the main fact that will be used regarding the carpet/hole toppling procedure.

\subsection{Valid configurations}
\label{sec:valid}

Fix integers $a > 0$ and $K = a^4$. The procedure described in this section is well-defined for any positive integer $a$, although later in Proposition \ref{prop:kick} and Lemma \ref{lem:sbe} we will need to choose a large enough $a$.
The toppling procedure needs to start from relatively nice particle configurations on a finite interval, which we call valid configurations.

\begin{definition} \label{def:valid} For any $n \in \N$, a pair of particle configuration and parity sequence $(\eta, \omega)$ on $D_n := (-K+a, nK)$ is \textbf{valid} if $$\eta = \sum_{i = -K+a+1}^{nK-1} \delta_i - \sum_{u \in U} \delta_{u} + \sum_{i=0}^{n-1} (b_{i,0}\delta_{iK} + b_{i,1}\delta_{iK+a})$$ for some $b_{i,0}, b_{i,1} \geq 0$ and some $U \subset \cup_{i=0}^{n-1} [iK, iK+a]$ satisfying 
\begin{equation} |U \cap [iK, iK+a]| \leq 1,\quad \text{for } i = 0, 1, \ldots, n-1. \end{equation}
\end{definition}






The subinterval $[iK, iK+a]$ is called the $i$th \textbf{block} for $i = 0, 1, \ldots, n-1$, and the complement of the blocks are called \textbf{transit regions}. In other words, a valid configuration has single particles everywhere, except for at most one empty spot per block and possibly some additional particles at the boundary of every block.

Fix a valid configuration $(\eta, \omega)$. In order to perform the carpet/hole toppling procedure starting from $(\eta, \omega)$, we need to identify the special site \textit{hole} in each block based on $(\eta, \omega)$, as well as classify the particles of $\eta$ into several categories. Note that the definitions given in the current Section \ref{sec:valid} only apply to the starting configuration $(\eta, \omega)$. In Section \ref{sec:carpet_rules}, we will provide rules regarding how the holes move and particles switch types during the procedure.

Every block has exactly one \textbf{hole}, uniquely determined by the pair $(\eta, \omega)$. For each $i = 0, \ldots, n-1$, if block $i$ has a position with no particle, then the empty site is the hole of block $i$. If block $i$ has a particle in every position, the leftmost site $x$ in that block with $\omega(x) = 1$ is declared the hole. If there is no such $x$, then declare $iK+a$ to be the position of the hole.

With the definition of the holes, we allocate the particles in $\eta$ into several types. By definition, each site in $D_n$ (containing both blocks and transit regions) that does not have a hole will have at least one particle. We declare one particle at every such site to be a \textbf{carpet} particle. (If there are multiple particles at a site, we simply pick an arbitrary one and declare it a carpet particle.) All the remaining particles in $D_n$ are \textbf{free} particles.

In the configuration $\eta$, free particles are either in a hole or at an endpoint of a block. Free particles are further divided into two groups. For each $i=0,\ldots,n-1$, if block $i$ has $\eta(x) \geq 1$ and $\omega(x) = 0$ for all $x \in [iK, iK+a]$, then the hole was defined to be at $iK+a$ and we declare an arbitrary (free) particle at $iK+a$ to be a \textbf{frozen} free particle. All the other free particles are \textbf{thawed}. There is always one special thawed particle, the \textbf{hot} particle, that actually gets toppled. We will explain the rule of determining the hot particle in Section \ref{sec:carpet_rules}.

\subsection{Carpet/hole dynamics} \label{sec:carpet_rules} 

We now formally define the carpet/hole toppling procedure inside a given interval $D_n = (-K+a, nK)$. Fix a valid configuration $(\eta, \omega)$. Suppose we've picked the starting locations of the holes and labelled the particles as `carpet', `free', `frozen' and `thawed' according to Section \ref{sec:valid}. The toppling procedure works as follows:



\begin{enumerate}[(L1)] \item \label{loop:hot} We follow a \textit{leftmost priority policy} for choosing the hot particle, which implies Lemma \ref{lem:coarse_freeze} below. Find the leftmost block $i$ that contains a thawed particle. If no such block exists, the procedure ends and we say the procedure reaches \textit{partial stabilization}. Among the thawed particles in block $i$, we choose the one inside the hole to be the hot particle if there is one. Otherwise, the thawed particles are at the boundary $iK$ or $iK+a$, and we declare an arbitrary thawed particle the hot one. This particular order of choice is to ensure \ref{pro:freewhere_afterhot} below. Once we choose the hot particle, we proceed with \ref{loop:main} or \ref{loop:unfreeze} depending on whether block $i$ contains a frozen particle.

\item \label{loop:main} Suppose block $i$ does not contain a frozen particle. If the hot particle is at the hole but the hole is not the leftmost position $y$ with parity value $\omega(y)=1$, then go directly to \ref{loop:exc} or \ref{loop:froze}. Otherwise, repeatedly topple the hot particle until it returns to the hole of block $i$, or hits either $(i-1)K+a$ or $(i+1)K$, which could be a boundary point of a neighboring block or a boundary point of the interval $D_n$. There are three cases:

\begin{enumerate}[(L2a)] 
\item \label{loop:exc} Suppose the hot particle returned to the hole of block $i$ and there exists some position in the block $i$ with odd parity. Find the leftmost position $y$ in block $i$ with parity value $\omega(y)=1$. Declare the hot particle at the hole a carpet particle, move the hole in block $i$ to position $y$, and declare the carpet particle at $y$ a thawed and hot particle. Return to \ref{loop:main}. 

\item\label{loop:froze} Suppose the hot particle returned to the hole of block $i$, but there is no site in block $i$ with odd parity. Turn the hot particle at the hole into a carpet particle, move the hole to position $iK+a$, and declare the carpet particle at $iK+a$ a frozen free particle. Return to \ref{loop:hot}.

\item\label{loop:emit} If the hot particle reached $(i-1)K+a$ or $(i+1)K$, declare it no longer hot but still free and thawed. Return to \ref{loop:hot}.
\end{enumerate}

\item \label{loop:unfreeze} If block $i$ does contain a frozen particle, then every site in block $i$ has a particle which is not the hot particle. Repeatedly topple the hot particle until it reaches $(i-1)K+a$ or $(i+1)K$. Then declare it no longer hot but still free and thawed. If block $i$ now has the leftmost site $y$ with parity value $\omega(y) = 1$, then turn the frozen free particle at $iK+a$ into a carpet particle, move the hole to $y$, and declare the carpet particle at $x$ free and thawed. Return to \ref{loop:hot}. 

\end{enumerate} 

The block size $a$ is large, so the typical occurrence is \ref{loop:exc}: a hot particle makes an excursion inside its block. Our aim is to show that step \ref{loop:froze} occurs much \textit{less} often than \ref{loop:emit}.

More broadly, what we will show is that the carpet/hole toppling procedure, as a renormalization scheme of the Stochastic Sandpile Model, `converges' to the Activated Random Walk model (ARW) with a small sleep rate when the block size goes to infinity. Each block and every free particle in our procedure correspond to a single site and a normal particle in the ARW model respectively. Step \ref{loop:emit} mimics an ARW particle moving to a neighboring site, whereas the rare \ref{loop:froze} step represents an ARW particle becoming sleepy with a small sleep rate. If \ref{loop:froze} does happen and a free particle becomes frozen, it can become thawed through \ref{loop:unfreeze}, just as a sleepy ARW particle gets reactivated by the presence of another particle.

Once we make the `convergence' rigorous in certain sense, the analysis of a 1D ARW-like system with a small sleep rate yields to a classical energy-entropy argument \cite{hu2022active}. The overall outcome of these is Proposition \ref{prop:kick}, stated at the end of Section \ref{sec:toppling}.




The following properties hold trivially for the initial configuration $(\eta, \omega)$ and are preserved by the half-toppling sequence, which can be checked by induction.
\begin{enumerate}[(P1)]

\item
Each block $ i $ has exactly one hole which is located at some site $ x \in [iK,iK+a]$.

\item
Whenever the hot particle is inside the hole of a block without a frozen particle, we always make sure this hole is located at the leftmost site $y$ in that block with parity value $\omega(y)=1$. If there is no site in block $i$ with odd parity, then we make sure the hole is at $iK+a$ and contains a frozen particle.


\item
There is exactly one carpet particle at every site in $D_n$ except for the holes.

\item
All free particles except the hot particle are in a hole, or at a site $iK$ or $iK+a$ for some $i$.

\item
We call a block which contains a frozen particle a \textbf{frozen block}. In a frozen block, both the frozen particle and the hole are at $iK+a$. Thus every site in this block has a particle that is not hot.
\end{enumerate}
We also collect some facts that will be useful throughout the paper.

\begin{enumerate}[(F1)]
\item 
\label{pro:legal}
All half-topplings performed during this procedure are legal. 


\item
\label{pro:freewhere_afterhot}
After the first free particle has been designated hot in block $i$, all free particles in block $i$ except the hot particle (if there is one) are at $iK$ or $iK+a$. 

\item
\label{pro:frozen_run}
A free particle designated hot in a block with a frozen particle reaches a neighboring block before being declared not hot.

\item
\label{pro:conserved}
The number of free particles is conserved during the procedure. 

\item 
\label{pro:end}
When the procedure ends, all free particles are either frozen, with at most one frozen particle per block, or at the boundary points of $D_n$. Also, there is at most one particle (either carpet or frozen free particle) at every site inside $D_n$.

\end{enumerate}

\subsection{Bounds on frozen particles}

We prove the following regarding the carpet/hole toppling procedure. Roughly speaking, it says that the dynamics is likely to sustain `enough activity' so that at the end of the procedure, there are few frozen particles left inside every subinterval.

Let $(\eta, \omega)$ be a valid configuration on $D_n=(-K+a, nK)$. Run carpet/hole dynamics with initial condition $(\eta, \omega)$ until \textit{partial stabilization} when there are no more thawed particles inside $D_n$. Denote its law by $\p_{(\eta, \omega)}$.
For $0 \leq n_0 \leq n_1 \leq n-1$, define $\frozen(n_0, n_1)$ to be the number of frozen free particles remaining in the blocks $n_0, n_0+1, \dots, n_1$ at the end of the carpet/hole dynamics on $D_n$.
Also let $m_n(\eta, \omega, x)$ be the (half-toppling) odometer function at site $x \in D_n$ resulting from the carpet/hole toppling procedure on $D_n$, and define
\begin{equation} H_i := \{m_n(\eta, \omega, iK) \geq \beta n\},\, H(n_0, n_1) := H_{n_0-1}^c \bigcap \left(\bigcap_{i = n_0}^{n_1} H_i \right) \end{equation}
for some constant $\beta>0$. 
The purpose of the event $H(n_0, n_1)$ is to ensure each block starting from a nice configuration and to facilitate a bootstrap argument later in the proof of Lemma \ref{jayhawk}. Write $H:=H(0,n-1)$ and $\frozen := \frozen(0,n-1)$.

\begin{proposition} \label{prop:kick} For $\delta= \beta = 4\times10^{-4}$ and any $L^\ast>0$, there exist $c_1>0$ and $K = e^{1.8\times10^5}$ such that the following hold for large enough $n' = n_1-n_0+1$.
If $(\eta, \omega)$ is valid and the total number of particles $\sum_{x \in D_n} \eta(x) \leq L^\ast |D_n|$,
then we have
\begin{equation} \p_{(\eta, \omega)}(\{\frozen(n_0, n_1) > \delta n'\} \cap H(n_0, n_1)) \leq \exp(-c_1 n'). \end{equation}
In particular,
\begin{equation} \p_{(\eta, \omega)}(\{\frozen > \delta n\} \cap H) \leq \exp(-c_1 n). \end{equation}
\end{proposition}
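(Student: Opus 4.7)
The plan is to combine the single block estimate (Lemma~\ref{lem:sbe}) with an energy--entropy union bound, in the spirit of the ARW argument of \cite{HRR20}. The second conclusion of the proposition follows immediately from the first by taking $n_0=0$, $n_1=n-1$, so I focus on bounding $\p_{(\eta,\omega)}(\{\frozen(n_0,n_1)>\delta n'\}\cap H(n_0,n_1))$. Let $S\subseteq\{n_0,\ldots,n_1\}$ denote the random set of blocks still containing a frozen free particle at partial stabilization, so the event in question reads $\{|S|>\delta n'\}\cap H(n_0,n_1)$. A union bound over deterministic subsets gives
\begin{equation*}
\p_{(\eta,\omega)}\bigl(\{|S|>\delta n'\}\cap H(n_0,n_1)\bigr)\leq \sum_{k>\delta n'}\binom{n'}{k}\max_{|S_0|=k}\p_{(\eta,\omega)}\bigl(\{S\supseteq S_0\}\cap H(n_0,n_1)\bigr).
\end{equation*}
The entropy factor $\binom{n'}{k}$ is controlled by $\exp(h(\delta)n')$ with $h$ the binary entropy. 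The heart of the argument is therefore to bound the probability term by some quantity of the form $p^{|S_0|}$, after which the product can be beaten by taking $p$ small via the choice of $a$ (and hence $K=a^4$).

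To do that I would use the half-toppling abelian property (Lemma~\ref{lem:halftop_abelian}) to choose a toppling order that exposes the blocks of $S_0$ sequentially, so that the global carpet/hole dynamics on $D_n$ effectively factorizes into a collection of single-block instances, each of which receives some boundary data from the neighbors and outputs either a frozen particle or not. On the event $H_i$ the odometer at $iK$ is at least $\beta n$, which, since each half-toppling at a boundary site transfers one particle across that boundary, means block $i$ is visited at least $\beta n$ times from outside. The single block estimate will say precisely that, given a sufficiently large number of such boundary visits, a single block finishes frozen with probability at most some small $p=p(a)$, decaying as $a\to\infty$; iterating over $i\in S_0$ and using that the stack instructions inside distinct blocks are independent would yield the bound $p^{|S_0|}$.

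Putting the two pieces together yields an estimate of the form $\exp\bigl(n'[h(\delta)+\delta\log p]\bigr)$, which is exponentially small provided $p<\exp(-h(\delta)/\delta)$. For small $\delta$ one has $h(\delta)/\delta\approx \log(1/\delta)+\mathrm{const}$, so it suffices to make $p$ noticeably smaller than $\delta$. With $\delta=4\times 10^{-4}$, taking $a$ large enough (consistent with $K=e^{1.8\times 10^{5}}$) should make $p$ small enough to close the calculation, and the constant $c_1$ is read off from the resulting exponent. The hypothesis $\sum_x\eta(x)\leq L^*|D_n|$ enters only through controlling the initial data fed into the single-block estimate.

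The main obstacle is the block-decoupling step. Unlike ARW, where particles move independently, SSM topplings send two correlated particles at once, and the hot-particle bookkeeping, in particular step~\ref{loop:unfreeze} in which a frozen particle can be unfrozen by a visiting hot particle, means the final status of block~$i$ depends on the entire history of hot-particle traffic through it, including traffic originating far away. One must therefore choose the toppling order carefully (for instance, first handle blocks outside $S_0$ to deliver their outflow into each $S_0$-block, then process each $S_0$-block using only its own fresh instructions) and invoke a form of the abelian property that preserves both $\{S\supseteq S_0\}$ and $H(n_0,n_1)$. Once this decoupling and the corresponding conditioning are set up cleanly, the application of Lemma~\ref{lem:sbe} and the union-bound arithmetic are routine, but verifying legality and measurability of the chosen ordering is the delicate step.
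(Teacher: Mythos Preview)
Your high-level energy--entropy framing is right, but the decoupling mechanism you propose is not the one the paper uses, and as written it has a genuine gap.

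The paper does \emph{not} union-bound over subsets $S_0$ of frozen blocks. Instead it bounds the exponential moment $\E[e^{c\,\frozen}\,1_H]$ and applies Markov. The decoupling comes from the \emph{mass balance equations}: by the leftmost-priority rule (Lemma~\ref{lem:coarse_freeze}) one has $F_i=F_i^i(L_{i+1})$ and $L_i=L_i^i(L_{i+1})$, so the whole run is encoded by the random flow vector $(L_0,\ldots,L_{n-1})$. One then writes
\[
\E\Bigl[\prod_i e^{cF_i}1_{H_{i+1}}\Bigr]\le \E\Bigl[\sum_{s_0}\cdots\sum_{s_{n-1}}\prod_i e^{cF_i^i(s_{i+1})}1\{L_i^i(s_{i+1})=s_i\}\cdots\Bigr]
\]
and peels off the innermost factor by conditioning on $\mathcal F_{i-1}$. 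This is exactly what Lemma~\ref{lem:sbe} is shaped for: its left-hand side is $\sup_l\sum_{s\ge 2}\E[e^{cF_i^i(s)}1\{L_i^i(s)=l\}\mid\mathcal F_{i-1}]<c'$, a sum over \emph{inputs} $s$ for fixed output $l$, not a single probability that a block ends frozen. The entropy in this argument is the multiplicity of flow vectors $(s_i)$, not the $\binom{n'}{k}$ count of frozen subsets; the condition $\log c'<\delta c$ replaces your $p<e^{-h(\delta)/\delta}$. The role of $H_i$ is not ``block $i$ is visited many times from outside'' but, via an auxiliary event $J_i$, to force $L_i\ge 2$ so that the $s\ge 2$ summation in Lemma~\ref{lem:sbe} is the relevant one. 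The bound $\sum_x\eta(x)\le L^\ast|D_n|$ is used to cap $L_0$ (or $L_{n_0}$) deterministically, giving the outer sum a finite range.

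The specific decoupling you sketch---reorder the topplings so blocks in $S_0$ use only their own fresh instructions---does not go through. The abelian lemma for half-topplings (Lemma~\ref{lem:halftop_abelian}) only says that any legal half-toppling sequence gives a \emph{lower} bound on the odometer; it does not say different orders reach the same terminal configuration, and ``frozen'' is a label defined relative to the fixed leftmost-priority carpet/hole procedure. So the event $\{S\supseteq S_0\}$ is not invariant under reordering, and you cannot factor $\p(S\supseteq S_0)$ into per-block pieces this way. Moreover, conditioning on earlier $S_0$-blocks being frozen is \emph{not} conditioning on $\mathcal F_{i-1}$, since $F_i$ depends on the future input $L_{i+1}$; the paper sidesteps this by summing over all values of that input rather than conditioning on frozen events. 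If you want to repair your outline, the missing idea is precisely this sum-over-flows / mass-balance step together with the decorated-stack filtration $\mathcal F_i$.
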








Proposition \ref{prop:kick} will be proved in the next section.
We will rely on Proposition \ref{prop:kick} to do the analysis in Section \ref{sec:boot}.


\section{Filtrations and coarse-grained particle flows}
\label{sec:flows_sbe}

Fix any initial configuration $(\eta, \omega)$ on $D_n$. Consider one iteration of the carpet/hole dynamics on an interval $D_n = (-K+a, nK)$ with $n$ blocks. Recall our aim is to prove Proposition \ref{prop:kick}.
The `single block estimate' Lemma \ref{lem:sbe} roughly says that 
the probabilities of a block being frozen at certain stopping times are small. To combine these estimates for different blocks, it is necessary to introduce some independence between the blocks, and enumerate the possible trajectories of the system. 
Section \ref{sec:flows_sbe} is devoted to these goals, and then giving a proof of Proposition \ref{prop:kick} by using Lemma \ref{lem:sbe}. Though more technical, the argument in this section follows a similar approach as the one in \cite{HRR20}. 


\subsection{Decorated stacks and filtration} 

Whenever a free particle is designated as the hot particle in step \ref{loop:hot}, we designate the block $i$ it is in as the \textbf{hot block}. For each $y \in D_n$ in a transit region, we split the stack instructions for $y$ into two independent stacks of independent instructions, $\xi^y  = (\xi^{y,L}, \xi^{y,R})$. The hot particle toppled at site $y$ uses instructions from the $L$ stack if the nearest block to the left of $y$ is the hot block, and from the $R$ stack if the nearest block to the right of $y$ is the hot block. Sites $y$ in a block have just a single instruction stack, $\xi^y$. We work with the filtration $\mathcal{F}_i$ of all the stack instructions decorated by blocks $j \leq i$, i.e. 

\begin{equation} \mathcal{F}_i = \sigma[\{\xi^y: y \in (-K+a, iK+a]\} \cup \{\xi^{y, L}: y \in [iK+a, (i+1)K)\}]. \end{equation}

\subsection{Coarse-grained particle flows and mass balance equations}

To index possible `trajectories' of the carpet/hole dynamics, we count the flow of particles between blocks. For each $i \in \{0, \ldots, n-1\}$ and $s \geq 0$ let 
\begin{equation} \eta^+(s, i) = \eta + s \delta_{iK + a} \end{equation} 
be the configuration $\eta$ with $s$ additional particles at the right edge of block $i$. We define the following `coarse-grained counters': 

\begin{definition} For each $j \in \{0, \ldots, n-1\}$, run carpet/hole dynamics started from configuration $(\eta^+(s, j), \omega)$ until there are no legal topplings of free particles in $(-K+a, jK)$, the first $j$ blocks. Define

\begin{itemize}
\item for any $i \leq j$, the total number of times $L_i^j(s) $ during this toppling procedure that a free particle goes left using an instruction from the stack $\xi^{(i-1)K+a+1, R}$;

\item for any $i \leq j$, the number of frozen particles $F_i^j(s)$ in block $i$ at the end of the toppling procedure (either 0 or 1 by \ref{pro:end});

\item $F_i := F_i^{n-1}(0)$ and $L_i := L_i^{n-1}(0)$. Set $L_n:=0$.

\end{itemize} \end{definition} 

Note that $F_i^j(s), L_i^j(s) \in \mathcal{F}_j$. Our leftmost priority policy for choosing the hot particle guarantees the next lemma.

\begin{lemma} \label{lem:coarse_freeze} We have, for each $i \in \{0, \ldots, n-1\}$,

\begin{equation}  F_i = F_i^i(L_{i+1})
\quad\text{and}\quad L_i = L_i^i(L_{i+1}) . \end{equation} \end{lemma}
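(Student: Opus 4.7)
The plan is to combine the leftmost-priority rule (L1) with the abelian property of half-topplings (Lemma \ref{lem:halftop_abelian}). The key observation is that every left-move at site $iK+a+1$ using the $R$-stack, each one contributing to $L_{i+1}$, occurs while block $i+1$ is the hot block, and by (L1) this requires blocks $0,\dots,i$ to be fully stable at that moment. Moreover, the new thawed free particle deposited at $iK+a$ by step (L2c) immediately makes block $i$ the leftmost thawed block, and leftmost priority forces blocks $0,\dots,i$ to be re-stabilized before any further $R$-stack consultation at $iK+a+1$ can occur.

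Consequently, restricting attention to blocks $0,\dots,i$ and their associated transit regions, the full procedure consists of $L_{i+1}+1$ stabilization epochs on this subsystem, interleaved with $L_{i+1}$ single-particle injections at $iK+a$. The procedure that defines $F_i^i(L_{i+1})$ and $L_i^i(L_{i+1})$, on the other hand, is the one-shot stabilization of the subsystem starting from $\eta^+(L_{i+1},i)$, i.e.\ the same subsystem with the entire inflow dumped in at the start. Both are legal half-toppling sequences acting on the same initial amount of mass, so Lemma \ref{lem:halftop_abelian}, applied to the finite subsystem consisting of blocks $0,\dots,i$ together with the transit region up to $iK+a$, shows that the half-toppling odometer is the same in either case. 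Because the stack instructions $\xi^y,\xi^{y,L},\xi^{y,R}$ are consumed in order, equal odometers imply that the same instructions are consulted at each site in both procedures. In particular the count of $R$-stack left-moves at $(i-1)K+a+1$ and the carpet/hole labellings produced by the deterministic rules (L2a)--(L3) in block $i$ coincide, yielding $F_i=F_i^i(L_{i+1})$ and $L_i=L_i^i(L_{i+1})$.

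The main technical obstacle is passing from equality of bare half-toppling odometers, which is the actual content of Lemma \ref{lem:halftop_abelian}, to equality of the specific quantities $F_i$ and $L_i$, which depend on the ordered carpet/hole labellings (which site is the hole, which particle is hot, which free particle is frozen versus thawed). These labellings are not arbitrary: under the leftmost-priority rule they are a deterministic function of $(\eta,\omega)$ and the stack instructions consumed so far at each site. The careful step is to verify that once the odometers of the two procedures agree site-by-site, the updates to $\omega$ and to the carpet/free/frozen/thawed labels agree as well, so that the final frozen count inside block $i$ and the total $R$-stack left-flow at $(i-1)K+a+1$ must match at the respective stopping times.
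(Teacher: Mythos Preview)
Your identification of the key mechanism---leftmost priority forces each input from block $i+1$ to arrive only after blocks $0,\dots,i$ are already partially stabilized, and to be fully processed before the next arrival---is correct and matches the HRR20 argument the paper cites. The gap is in your appeal to Lemma~\ref{lem:halftop_abelian}. That lemma is an \emph{inequality} $m_I(\eta,\vec{0},\alpha,x) \leq 2\,m_I(\eta,\overline{\alpha},x)$ between a legal half-toppling sequence $\alpha$ and a complete \emph{full}-toppling stabilizing sequence $\overline{\alpha}$; it does not assert that two legal half-toppling sequences have equal odometers. Moreover, neither of your two procedures reaches half-toppling stability---they reach carpet/hole partial stabilization, a different and weaker stopping condition---so no abelian-type statement for half-topplings would apply here anyway. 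Without odometer equality, the subsequent step of reconciling labellings (which you correctly flag as delicate) has no foundation.

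The HRR20 argument sidesteps this by proving something stronger and more elementary: the sequential-injection and all-at-once procedures perform \emph{the identical sequence of topplings}, step by step. The point is that under (L1)--(L3), the choice of hot particle (hole first, then boundary) and its subsequent trajectory depend only on the current hole/parity state and the stack instructions consumed so far, not on how many additional thawed particles happen to be waiting at $iK+a$. Hence placing all $L_{i+1}$ extra particles at $iK+a$ in advance versus injecting them one at a time between epochs produces exactly the same dynamics; the all-at-once run simply continues without pausing at the epoch boundaries. Because the two runs coincide step by step, all labellings, the frozen indicator $F_i$, and the $R$-stack left-flow $L_i$ automatically agree---no abelian lemma and no post-hoc reconstruction of labellings is needed.
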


See the corresponding lemma in \cite{HRR20} for a proof. This allows us to study the number of frozen free particles in block $i$ at the end of the procedure by running the carpet/hole dynamics up to block $i$, plus some random number of extra particles input from the right.

To avoid dependence, we make a uniform argument over all possible values of the input from the right.
A vector of integers $\boldsymbol{s} = (s_0, s_1, \cdots, s_n=0)$ is said to satisfy the \emph{mass balance equations} if 
$$L_i^i(s_{i+1}) = s_i  \text{ for } i = 0, \ldots, n-1.$$
Note that the \emph{random} vector $(L_0, L_1, \ldots, L_{n} = 0)$ satisfies the mass balance equations by definition.
In what follows, we will sum over the \textit{random} set of possible vectors $\boldsymbol{s}$ satisfying these random mass balance equations.

\subsection{Single block estimate}

The following Lemma is an upper bound on the number of frozen particles $F_i$ in a fixed block. Sections \ref{sec:carpets}, \ref{sec:ugly} and \ref{sec:renewal} are devoted to its proof.


\begin{lemma}\label{lem:sbe} For $\delta=4\times10^{-4}$, there exist constant $K = e^{1.8\times10^5}$ and sufficiently large $c, c'$ such that $\log c' <\delta c$, and the following holds for any $i \leq n$. If the initial configuration $(\eta, \omega)$ on $D_n = (-K+a, nK)$ is valid, then
\begin{equation} \sup_{l \geq 0} \sum_{s \geq 2} \E_{(\eta, \omega)}[e^{c F_i^i(s)} 1\{L_i^i(s) = l\} | \mathcal{F}_{i-1}] < c'.
\end{equation}
Additionally, for any $k\ge0$,
\begin{equation}
\sup_{l\geq 0}\p_{(\eta, \omega)}\left( \sum_{s\geq0}1\{L_i^i(s)=l\} > k\bigg| \mathcal F_{i-1}\right)	\leq \theta^k
\end{equation}
for some $\theta\in(0,1)$ independent of $k$.
\end{lemma}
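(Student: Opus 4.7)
The plan is to prove the two assertions of the lemma in reverse order, since the geometric tail bound (part 2) gives the structural ingredient that I can feed into the exponential moment bound (part 1). Throughout, I condition on $\mathcal{F}_{i-1}$ and note that the trajectory of the carpet/hole dynamics inside block $i$ (together with the transit region to the right of $(i-1)K+a+1$) uses only instruction stacks that are independent of $\mathcal{F}_{i-1}$, except through the deterministic boundary input given by the configuration at site $(i-1)K+a$.

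For part 2, I would first establish monotonicity: $s \mapsto L_i^i(s)$ is nondecreasing, which follows from the half-toppling abelian property (Lemma~\ref{lem:halftop_abelian}) applied to the enlarged starting configuration $\eta^+(s,i)$. Thus for each $l$ the set $\{s \geq 0 : L_i^i(s) = l\}$ is an interval, and I need only bound its length. Given the state after processing the $s$th extra, the additional particle placed at $iK+a$ executes (to leading order) a simple random walk in block $i$, with the abelian property ensuring that the total odometer contribution from this one particle is well-defined. I would show that, uniformly in the conditioning, the conditional probability that adding this extra particle fails to produce a new left-crossing at $(i-1)K+a+1$ is bounded above by some constant $\theta<1$: a single walker started at the right edge of a block of length $a$ hits the left edge with probability bounded below, and from there has uniformly positive chance to continue past $(i-1)K+a+1$ rather than freeze or return. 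By the Markov property, the length of each constant-$l$ run is then stochastically dominated by a geometric$(1-\theta)$ random variable, giving the required $\theta^k$ tail.

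For part 1, since $F_i^i(s) \in \{0, 1\}$ by \ref{pro:end}, I write
\begin{equation}
\sum_{s \geq 2} e^{c F_i^i(s)} 1\{L_i^i(s) = l\} = N_l + (e^c - 1) \sum_{s \geq 2} 1\{F_i^i(s) = 1,\, L_i^i(s) = l\},
\end{equation}
where $N_l := \#\{s \geq 2 : L_i^i(s) = l\}$. Taking conditional expectations, the first term is at most $\sum_k \P(N_l \geq k \mid \mathcal{F}_{i-1}) \leq (1-\theta)^{-1}$ by part 2. For the second, each summand requires block $i$ to be frozen at the end of the run with input $s$, which in turn requires some invocation of step~\ref{loop:froze}; bounding this by $N_l$ times the conditional probability $\rho$ that the full carpet/hole dynamics in block $i$ ever triggers step~\ref{loop:froze}, the second term contributes at most $(e^c-1) \rho \, \E[N_l \mid \mathcal{F}_{i-1}]$. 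The total bound is therefore of order $\rho \, e^c$, and the condition $\log c' < \delta c$ with $\delta = 4 \times 10^{-4}$ is met by choosing $K = e^{1.8 \times 10^5}$ large enough that $\rho \leq e^{-c(1-\delta)}$, which is exactly the regime the single-block freeze estimate is designed to achieve.

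The main obstacle is the single-block freeze probability $\rho$. Triggering step~\ref{loop:froze} requires that the parity configuration on $[iK, iK+a]$ ever become identically zero at the very moment a hot particle returns to the hole. Heuristically, since each excursion that completes inside the block flips a random subset of parities, the event that all $a+1$ parities are simultaneously zero is exponentially rare in $a$ when excursions behave independently; making this rigorous despite the strong correlations among successive excursions (which is the point stressed in Section~\ref{sec:intro} as the main difficulty versus ARW) is precisely the work of Sections~\ref{sec:carpets}--\ref{sec:renewal}. I would follow the three-step path already signalled there: use the auxiliary process in Section~\ref{sec:carpets} to reveal parity information in a way that preserves independence of successive random walks; control the typical density of zeros of this revealed process via the careful counting of Section~\ref{sec:ugly}; and combine these into a renewal-type bound over the sequence of visits to block $i$ in Section~\ref{sec:renewal} to produce the quantitative value of $\rho$ that the above decomposition consumes.
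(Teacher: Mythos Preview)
Your high-level structure matches the paper's: both use a geometric tail for the run length (part 2) and a small single-block freeze probability (the work of Sections \ref{sec:carpets}--\ref{sec:renewal}) for part 1. But your part 1 decomposition has a genuine gap.

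The issue is the step ``bounding this by $N_l$ times the conditional probability $\rho$ that the full carpet/hole dynamics in block $i$ ever triggers step~\ref{loop:froze}.'' As defined, this $\rho$ is not small: if you feed enough particles into the block, step~\ref{loop:froze} will eventually fire, so the ``ever'' probability is essentially $1$. What you need is a \emph{per-step} (or per-attempted-emission) freeze bound, and once you have that, the inequality $\E[\sum_s 1\{F_i^i(s)=1, L_i^i(s)=l\}] \leq \rho\, \E[N_l]$ still does not follow, because the events $\{F_i^i(s)=1\}$ and the random set $\{s: L_i^i(s)=l\}$ are correlated. You cannot simply multiply a random count by a marginal probability.

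The paper's repair is to re-parameterize by the number $k$ of \emph{attempted emissions} rather than inputs $s$, set $\tau_l=\inf\{k:\text{Left}(k)=l\}$, and write the target sum as $\sum_{k=\tau_l\vee 2}^{\tau_{l+1}-1} e^{cF(k)}$. The key per-step ingredient is Lemma \ref{snack}: for every $k\geq 2$, $\p(F(k)=1\mid \mathcal F_{i-1})\leq 8a^{-1}$, uniformly. This is where the restriction $s\geq 2$ in the lemma statement comes from---the first two attempted emissions are needed to drive the block into a Base or $\epsilon$-Base state before the estimates of Sections \ref{sec:carpets}--\ref{sec:ugly} apply. The combination is then done by taking, for each $k$, the minimum of two bounds (the geometric tail $\theta^k e^c$ and the moment bound $1+O(e^c k a^{-1})$) and splitting the sum at $k\approx \log a$; only after this does the choice $a=e^{1.1c}$ give $\log c' < \delta c$. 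Your sketch of part 2 and your outline of what Sections \ref{sec:carpets}--\ref{sec:renewal} accomplish are accurate, but the combination step in part 1 needs to be redone along these lines.
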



Note the lower limit of the summation over $s$ in the first statement. Since we only assume the starting configuration to be valid, we need at least two input particles to `fix' the configuration -- see the proof of Lemma \ref{lem:sbe} in Section \ref{sec:renewal}.

In this section we use the single block estimate to prove Proposition \ref{prop:kick}.
\\

\begin{pfofprop}{\ref{prop:kick}}
We start by treating the special case where $n_0=0$ and $n_1 = n-1$.
Let $\eta$ be any valid carpet, and recall the events $H_i$ that the odometer of site $iK$ is at least $\beta n$ and $H$ that all $H_i$'s occur (note that $H_{-1}=\emptyset$). Also let $J_i$ be the event that among the first $\beta n$ many times a hot particle is toppled at site $iK$, at least twice it takes a step left and then reaches site $(i-1)K+a$ before returning to site $iK$. Note that $J_i \in \mathcal{F}_i$ and
 \begin{equation} \label{omakase}
 	\p(J_i^c\mid \mathcal{F}_{i-1}) \leq \beta n \left(1-\frac{1}{2K}\right)^{\beta n-1} \leq \exp(-c_2n)
 \end{equation}
for some $c_2>0$ depending only on $K$ and $\beta$.

To bound $\frozen$, the number of frozen free particles remaining in $D_n$ after the carpet/hole dynamics, we use the fact that $H_i \subset \{L_i\geq2\} \cup J_i^c$. Since $\frozen = \sum_i F_i$,
\begin{align} \E[e^{c\frozen} 1_H] &\leq e^c \cdot \E\left[\prod_{i=0}^{n-2} e^{cF_i}1_{H_{i+1}} \right] \\
&\leq e^c \cdot \E\left[\prod_{i=0}^{n-2}\left(e^{cF_i}1\{L_{i+1}\geq2\} + e^c1_{J_{i+1}^c}1\{L_{i+1} < 2\}\right)	\right].
\end{align}
Recall there are at most $L^\ast |D_n|$ particles inside $D_n$ at the beginning. Let $c_4=L^\ast(K+1).$ Using Lemma \ref{lem:coarse_freeze}, we may rewrite the expectation as
\begin{align}
&\E\left[\sum_{s_0=0}^{c_4n}\sum_{s_1} \cdots \sum_{s_{n-1}} \prod_{i=0}^{n-2} \left(e^{cF_i^i(s_{i+1})}1\{L_{i+1}\geq2\} + e^c1_{J_{i+1}^c}1\{L_{i+1} < 2\}\right) 1\{L_i^i(s_{i+1}) = s_i\} 1\{L_i = s_i\}\right] \\
&\leq \E\left[  \sum_{s_0=0}^{c_4n} \sum_{s_1} \cdots \sum_{s_{n-1}} \pi_{n-1}\right] = \mathcal S(n-1),
\end{align}
where \begin{equation}
 \pi_k := \prod_{i=0}^{k-1} \left(e^{cF_i^i(s_{i+1})}1\{L_i^i(s_{i+1}) = s_i\}1\{s_{i+1}\geq2\} + e^c 1_{J_{i+1}^c} 1\{s_{i+1}<2\} \right) \in \mathcal F_k
 \end{equation} and \begin{equation}
 \mathcal S(k) := \E\left[  \sum_{s_0=0}^{c_4n} \sum_{s_1} \cdots \sum_{s_k} \pi_k\right].
 \end{equation}

We will inductively show that $\mathcal S(k) \leq (c_4n+1)(c'+4e^{2c-c_2\wedge c_3n})^k$ for constants $c_2,c_3,c_4$.
The base case $k=0$ is trivial and the case $k=1$ follows from Lemma \ref{lem:sbe} and the estimate \eqref{omakase}. Suppose that the inequality is true for $k-2$ and $k-1$. Let \begin{equation}
 U_i(s_{i+1},s_i) := e^{cF_i^i(s_{i+1})}1\{L_i^i(s_{i+1}) = s_i\} \in \mathcal F_i \quad\text{and}\quad	 V_i := e^c1_{J_i^c} \in \mathcal F_i.
 \end{equation}
Decomposing the last two sums depending on whether $S_{k-1}\ge2$ and $S_k\ge2$, we get
\begin{align}
\mathcal S(k) =& \,\E\left[\sum_{s_0=0}^{c_4n} \sum_{s_1} \cdots \sum_{s_{k-1}} \pi_{k-1} \cdot \sum_{s_k=0,1} V_k \right]\\
 &+\E\left[\sum_{s_0=0}^ {c_4n}\sum_{s_1} \cdots \sum_{s_{k-2}} \pi_{k-2} \cdot \sum_{s_{k-1}\geq2} U_{k-2} (s_{k-1},s_{k-2})\sum_{s_k\geq2} U_{k-1}(s_k,s_{k-1}) \right]\\
 &+\E\left[\sum_{s_0=0}^{c_4n} \sum_{s_1} \cdots \sum_{s_{k-2}} \pi_{k-2} \cdot \sum_{s_{k-1}=0,1}\sum_{s_k\geq2}  V_{k-1}U_{k-1}(s_k,s_{k-1}) \right].
\end{align}
By conditioning on $\mathcal F_{k-1}$ and using \eqref{omakase}, the first sum is bounded above by $\mathcal S(k-1)\cdot2e^{c-c_2n}$. Similarly, the second sum is at most $\mathcal S(k-1)\cdot c'$ by conditioning on $\mathcal F_{k-2}$ and using the first part of Lemma \ref{lem:sbe}. For the last sum, we have \begin{align}
 	&\E\left[\sum_{s_{k-1}=0,1}\sum_{s_k\geq2}  V_{k-1}U_{k-1}(s_k,s_{k-1})\bigg| \mathcal F_{k-2}\right]\\
 	&\leq 2 e^{2c} \E\left[ \left(\sum_{s_k} 1\{L_{k-1}^{k-1}(s_k)=s_{k-1}\}\right)\cdot 1_{J_{k-1}^c}\bigg|\mathcal F_{k-2}\right].
 \end{align}
It follows from \eqref{omakase} and the second part of Lemma \ref{lem:sbe} that the third sum is upper bounded $\mathcal S(k-2)\cdot 2e^{2c-c_3n}$ for some $c_3>0$. Combining all three bounds finishes the inductive step. 

To bound $\frozen$, we apply Markov's inequality together with the bound on $\mathcal S(n-1)$ to get
\begin{equation} \p(\{\frozen > \delta n\} \cap H) \leq  (c_4n+1)\exp(c + (\log(c'+4e^{2c-c_2\wedge c_3n}) - \delta c)n). \end{equation} 
By the assumption $\log c' < \delta c$, the event in question is exponentially unlikely in $n$. This completes the proof of the case $n_0=0$ and $n_1=n-1$.

The proof of the general case is almost the same as the above case. The main difference is that when $n_0 \neq 0$, the counter $L_{n_0}$ no longer enjoys the deterministic upper bound as $L_0 \leq c_4 n$. Instead it suffices to argue that $H_{n_0-1}^c$ and $L_{n_0} > 4\beta/(1-\theta)$ occur simultaneously with probability exponentially small in $n$. By the second part of Lemma \ref{lem:sbe}, out of every two added particles at $(n_0-1)K+a$, with probability at least $1-\theta$ some particle reaches $(n_0-2)K+a$ and thus visits $(n_0-1)K$. Then a standard concentration bound gives the claim.
\end{pfofprop}

\section{Independent starting configurations}\label{sec:boot}

In this section we shall use Proposition \ref{prop:kick} to prove Theorem \ref{thm_main}. The main challenge is that Proposition \ref{prop:kick} requires a valid initial configuration, as well as sufficient activity in every block as stipulated in the event $H$, whereas in Theorem \ref{thm_main} we directly start from an independent configuration. We bridge the gap by giving an explicit toppling procedure, alternating between legal IDLA steps and the carpet/hole toppling procedure run on a nested sequence of intervals. The particles collected at the boundary of a smaller interval help restore a valid configuration and ensure sufficient activity on a larger interval, which, in turn, guarantees that enough particles reach the endpoints of the larger interval. We will show that this procedure runs forever with positive probability. 


Recall that $K=a^4$ is the period of the blocks in the carpet/hole procedure. Throughout Section \ref{sec:boot}, we use the same $K=e^{1.8\times10^5}$ from Proposition \ref{prop:kick} and Lemma \ref{lem:sbe}. We will prove the following re-statement of Theorem \ref{thm_main}.

\begin{theorem} \label{painting}
Let $\mu$ be a probability distribution supported on finitely many non-negative integers with mean $p \in (1-\frac{1}{3K},1)$ and $\sum_{j \geq 2} \mu (j) >0$. Let $\{X(i)\}_{i \in \Z}$ be i.i.d.\ random variables with distribution $\mu$. Then the system with starting configuration $\{X(i)\}_{i \in \Z}$ stays active a.s.
\end{theorem}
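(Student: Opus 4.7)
The plan is to reduce, via Lemma \ref{0/1law}, to showing $\P(m(\eta_0, 0) = \infty) > 0$, and to exhibit such an event through the nested-interval construction outlined in the excerpt. Fix a sequence $n_1 < n_2 < \cdots$ growing geometrically (say $n_{k+1} = 2 n_k$). Inductively I would build a legal half-toppling sequence on $D_{n_k}$ that alternates two phases: a carpet/hole phase of Section \ref{sec:toppling} that generates activity inside $D_{n_k}$ and deposits a reservoir of $\Omega(n_k)$ free particles at each of the two endpoints of $D_{n_k}$; and an IDLA-style repair phase on the annulus $D_{n_{k+1}} \setminus D_{n_k}$ that transforms the independent i.i.d.\ particles in the annulus, together with the deposited reservoir, into a valid configuration on all of $D_{n_{k+1}}$ that is ready for the next carpet/hole phase.

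The inductive ``good event'' $G_k$ would record that, after iteration $k$: (i) the resulting configuration on $D_{n_k}$ is valid in the sense of Definition \ref{def:valid}; (ii) the activity event $H(0, n_k-1)$ of Proposition \ref{prop:kick} held during the $k$th carpet/hole phase; and (iii) at each endpoint of $D_{n_k}$ there is a reservoir of $\Omega(n_k)$ free particles available for the next repair phase. Given $G_k$, I would first perform the IDLA repair to obtain a valid configuration on $D_{n_{k+1}}$, then run the carpet/hole dynamics there. By Proposition \ref{prop:kick}, on $H(0, n_{k+1}-1)$ at most $\delta n_{k+1}$ free particles become frozen; conservation (property \ref{pro:conserved}) then ensures that $\Omega(n_{k+1})$ free particles are pushed to the new boundary, restoring condition (iii) at level $k+1$. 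Independently, standard random-walk hitting estimates show that $H(0, n_{k+1}-1)$ itself occurs except on a set of probability $\exp(-c n_{k+1})$, given enough incoming flux at the new block boundaries.

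The main obstacle is the design and analysis of the IDLA repair step. Valid configurations are rigid: at most one empty site per block, all other sites occupied by a single carpet particle, free particles only at holes or block endpoints, and hole positions determined by parity. The assumption $p > 1 - 1/(3K)$ is chosen so that in each new block the expected deficit of particles relative to the ``one-per-site'' baseline is $K(1-p) < 1/3$; by a Chernoff bound, the number of new blocks in the annulus whose i.i.d.\ sum $\sum_{i \in \text{block}} X(i)$ falls below $K-1$ is at most $\varepsilon (n_{k+1} - n_k)$ except with probability $\exp(-c n_k)$, and each such bad block can be patched using $O(1)$ particles drawn from the boundary reservoir. The repair itself proceeds greedily block by block with legal half-topplings (whose legality is verified by induction on the sequence), spreading excess local particles and reservoir particles to fill empty sites and adjusting parities so that the resulting hole placements and free-particle placements satisfy Definition \ref{def:valid}. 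The assumption $\sum_{j \geq 2} \mu(j) > 0$ ensures that such excess local particles exist with positive density, which is needed both for the base case on $D_{n_1}$ and for the first carpet/hole phase to have any free particles to topple.

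Putting the estimates together yields $\P(G_{k+1}^c \mid G_k) \leq \exp(-c' n_k)$, and since $\sum_k \exp(-c' n_k) < \infty$ under exponential growth of $n_k$, a standard calculation gives $\P(\bigcap_k G_k) > 0$. On this event the half-toppling odometer at site $0$ accumulates at least $\beta n_k$ topplings during iteration $k$ alone, so it tends to infinity as $k \to \infty$; by the half-toppling Abelian lemma (Lemma \ref{lem:halftop_abelian}), the classical odometer $m(\eta_0, 0)$ is likewise infinite. Lemma \ref{0/1law} then upgrades positive probability to almost sure activity, proving Theorem \ref{painting}.
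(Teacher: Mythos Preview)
Your overall architecture matches the paper's: nested intervals growing geometrically, alternation between an IDLA repair phase and a carpet/hole phase, an inductive good event whose failure probability is summable, and the conclusion via Lemmata \ref{0/1law} and \ref{lem:halftop_abelian}. But two of the steps you label as routine are in fact the crux of the paper's Section \ref{sec:boot}, and as written your proposal skips the ideas needed to carry them out.

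First, you assert that conservation (property \ref{pro:conserved}) together with Proposition \ref{prop:kick} implies that $\Omega(n_{k+1})$ free particles land at \emph{each} of the two endpoints of $D_{n_{k+1}}$. Conservation only controls the total at the two endpoints combined; nothing so far prevents nearly all of them from exiting on one side. The paper handles this with a separate ``center of mass'' argument (Lemmata \ref{spotted cow}--\ref{bernardo}): one tracks $\sum_j j\,\eta(j)$ before and after the carpet/hole phase, shows deterministically that a large imbalance of exits would shift this quantity by order $M_i^2$, and then shows probabilistically (via Azuma on the martingale of toppling directions, plus a bound on the total number of topplings) that such a shift is exponentially unlikely. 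This idea is absent from your sketch.

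Second, you write that ``standard random-walk hitting estimates show that $H(0, n_{k+1}-1)$ itself occurs except on a set of probability $\exp(-c n_{k+1})$, given enough incoming flux at the new block boundaries.'' This is not standard: $H$ requires the odometer at \emph{every} block endpoint to exceed $\beta n$, but the reservoir from the previous stage sits at the \emph{old} boundary $\pm M_{i-1}$, deep inside the new interval, and a priori you have no control over how far its influence spreads. The paper's Lemma \ref{jayhawk} handles this by a bootstrap: one first shows the odometer is high near $\pm M_{i-1}$, then takes the maximal interval of blocks with high odometer and uses Proposition \ref{prop:kick} (in its general $(n_0, n_1)$ form) together with a second, restricted center-of-mass calculation (Lemma \ref{semifreddo}) to argue that this interval must reach both ends of $D_{n_{k+1}}$. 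Your appeal to ``enough incoming flux at the new block boundaries'' is circular, since that flux is precisely what you are trying to establish.
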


\begin{pfofthm}{\ref{thm_main}}
Theorem \ref{painting} is a quantitative version of Theorem \ref{thm_main} with $\mu_c \leq 1-\frac{1}{3K} \leq 1-\exp(-2\times10^5)$. It causes no loss of generality to assume that $\mu$ is supported on finitely many integers and $\E(\mu)<1$. To see this, note that for any probability distribution $\mu$ on $\N$ satisfying $\E(\mu) > 1-\frac{1}{3K}$ and $\sum_{j\geq2}\mu(j) > 0$, one can find another distribution $\tilde\mu$ stochastically dominated by $\mu$, such that $\tilde\mu$ is supported on finitely many integers, $\E(\tilde\mu) \in (1-\frac{1}{3K},1)$ and $\sum_{j\geq2} \tilde\mu(j) > 0$.
By monotonicity, if the stochastic sandpile with independent initial distributions $\tilde \mu$ does not fixate, then the model with distribution $\mu$ also does not fixate.
\end{pfofthm}
\vspace{.1in}

Throughout the rest of this section, we assume $\mu$ and $\{X(i)\}_{i\in\Z}$ satisfy all the conditions in Theorem \ref{painting}.

%
%
%
%

\subsection{Partial stabilization on nested intervals}

Define a sequence $\{ \tilde M_i\}_{i \geq 0}$ with $\tilde M_{i+1}=\lfloor (\tilde M_i) (1+ \ratio) \rfloor $ for some large 
$\tilde M_0$ with  $\gamma=.02$. Let $M_i=(\tilde M_i+1)K-a/2$. Consider a sequence of intervals of integers $$\intervali :=\{-M_i, \dots ,M_i\},$$ which is a shifted version of the interval $D_{2\tilde M_i+1} = (-K+a,(2\tilde M_i+1) K)$ with $2\tilde M_i+1$ many blocks. For $i\geq1$, write $$\intervalleft = \{-M_{i},\dots,-M_{i-1}-1\} $$ and  $$\intervalright = \{M_{i-1}+1,\dots, M_i\}. $$
Also define
$S^+_i:=\sum_{j \in \intervalright} jX(j)$ and 
$S^-_i:=\sum_{j \in \intervalleft} jX(j)$. Then
$$\E(S^\pm_i) =\pm E(\mu)(M_i-M_{i-1})(M_i+M_{i-1}+1)/2.$$
Finally, let $\eventfourpm$ be the event that
$$|S^\pm_i -\E(S^\pm_i)| \leq .01 \gamma \tilde M_i M_i.$$

The above notation $\intervali$ works for all $i\geq0$. For $i=-1$, we use the convention that $\intervalminusone:= \{-a/2\}$. Here $-a/2$ is the left endpoint of the center block containing site zero.
For $i=0$, we write $\intervalleft = \{-M_0,\dots,-a/2-1\}$ and $\intervalright = \{-a/2+1,\dots,M_0\}$.

We will inductively define partial stabilization procedures on the nested sequence of intervals $\{\intervali\}_{i\geq0}$ and the resulting configurations $\{Y_i(z)\}_{z \in \intervali}$. Set $Y_{-1}(-a/2)= X(-a/2)$. Suppose $\{Y_{i-1}(z)\}_{z \in \intervalminus}$ is defined, we may extend the definition $Y_{i-1}(z) := X(z)$ for all $z \notin \intervalminus$, and then define $\{Y_i(z)\}_{z \in \intervali}$ to be the configuration after the partial stabilization of 
$\intervali$ with initial configuration $\{Y_{i-1}(z)\}_{z \in \intervali}$ 
and particles frozen when they get to the boundary of the interval.
Let $u_i(z)$ be the site odometer at $z$ in the partial stabilization of $\intervali$ from $Y_{i-1}$.

For $i\geq1$, to go from $Y_{i-1}$ to $Y_i$ we do the partial stabilization in the following order:
\begin{itemize}
\item Run IDLA on the particles in the two intervals 
$$\intervalleft\setminus\{-M_i\} \text{ and } \intervalright\setminus\{M_i\}$$
and freeze particles at the boundaries $-M_i, -M_{i-1},M_{i-1}$ or $M_i$. In other words, keep toppling every particle inside both intervals until it either becomes alone at its site or reaches the boundary.
\item Run IDLA on the particles at $-M_{i-1}$ and $M_{i-1}$, freezing particles at $-M_i$ and $M_i$, until
$\intervalleft\setminus\{-M_i\}$ and $\intervalright\setminus\{M_i\}$ are completely filled or we run out of particles at $-M_{i-1}$ and $M_{i-1}$.
\item If the configuration inside $\intervali$ becomes valid after the previous two steps, stabilize $\intervali$ according to the carpet/hole dynamics, freezing particles at $-M_i$ and $M_i$.
\end{itemize}

For $i=0$, we perform a similar procedure by replacing the interval endpoints $-M_{i-1}$ and $M_{i-1}$ in the first and second steps by $-a/2$.

Let $Z_{i,1}$, $Z_{i,2}$ and $Z_{i,3}=Y_i$ the configurations after each of these three steps at stage $i$ respectively. Each of these configurations has at most one particle per site inside $\intervali$ except at $-M_{i-1}$ and $M_{i-1}$ (or $-a/2$ when $i=0$).

Let $L^\ast := \max\{j:\ \mu(j)>0\} \geq 2$, and let $\stageminusone$ be the event that
\begin{itemize}
\item every site in $\intervalzero$ contains $L^\ast$ particles initially,
\end{itemize}
which occurs with small but positive probability. Suppose $\stageiminusone$ happens, we will define the event $\stagei$ inductively. Conditioned on $\stageiminusone$, we will observe the following typical behavior during the partial stabilization on $\intervali$:
\begin{itemize}
\item After we run IDLA on the particles in  
$$\intervalleft\setminus\{-M_i\} \text{ and } \intervalright\setminus\{M_i\},$$ 
the density of sites inside each of those intervals that is covered by a particle is between $1-1/(3K)$ and 1. 
We also expect that $\eventfourplus \cap \eventfourminus$ occurs when $i\geq1$.
Call the intersection of these three events to be $\eventone$.
\item Then we run IDLA on the particles at $-M_{i-1}$ and $M_{i-1}$ (or $-a/2$ when $i=0$) until $\intervalleft\setminus\{-M_i\}$ and $\intervalright\setminus\{M_i\}$ are completely filled.
We expect that there are at least $.2 \tilde M_i$ particles left at both $-M_{i-1}$ and $M_{i-1}$ (or $-a/2$ when $i=0$) at the end of this step. Call this event $\eventtwo$.
\item If the typical events occur up to this point, by definition the configuration inside $\intervali$ will be valid and thus we may carry out the carpet/hole toppling procedure. After we stabilize according to the carpet/hole dynamics, there will be less than $\delta (2\tilde M_i+1) = .0004 (2\tilde M_i+1)$ blocks without a hole in $\intervali$. At both $-M_i$ and $M_i$ there will be at least $\tilde M_i/4$  particles. Moreover, the odometer $u_i(z)$ at the left endpoint $z=-a/2+m'K$ of every block $m' \in \{-\tilde M_i, \dots, \tilde M_i\}$ during this carpet/hole procedure is at least $\beta (2 \tilde M_i+1) = .0004(2\tilde M_i+1)$. Call this event $\eventthree$.
\end{itemize}

When all these three events happen, we call the procedure successful at stage $i$ and define inductively 
$$\stagei := \stageiminusone \cap \eventone \cap \eventtwo \cap \eventthree.$$
We will show that the event $ \stageinf :=\cap_{i\geq-1}\stagei$ happens with positive probability. Once this is proved, we can show that the odometer lower bound in the definition of $\eventthree$ implies the system stays active almost surely, thus proving Theorem \ref{painting}. 
The goal of the rest of Section \ref{sec:boot} is to show that $\p(\stageinf) > 0$.
The first two steps of each stage are straightforward IDLA processes.
We will give lower bounds on the probabilities of $\eventone$ and $\eventtwo$ in Lemmata \ref{rock} and \ref{chalk}.
The stabilization in the third step is more involved and most of the work in this section will come in bounding the probability of $\eventthree$. Other than Proposition \ref{prop:kick}, we will need the `center of mass' calculation in Lemmata \ref{spotted cow}--\ref{bernardo} that guarantees enough particles reaching both endpoints. In the proof of Lemma \ref{jayhawk} we will also carry out a bootstrap argument which, starting from the blocks $\pm \tilde M_{i-1}$, proves the odometer of every block is high.
Since the definition and analysis of stage $0$ are slightly different from those of stages $i\geq1$, we only treat stage $i\geq1$ in all lemmata of Section \ref{sec:boot} but discuss the modifications for stage $0$ at the end of the section.



\subsection{IDLA steps}
 
We start by bounding the probability that the first step is successful.




\begin{lemma} \label{rock}
Let $X(i)$ be i.i.d. with distribution $\mu$ satisfying the conditions in Theorem \ref{painting}.
There exists $c>0$ such that for $M_i$ large enough,
$$\P\left(\eventone \right)=\P\left(\eventone \ | \ \stageiminusone \right)>1-e^{-cM_i}.$$
\end{lemma}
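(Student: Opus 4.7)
The plan is to decompose $\eventone$ into its three constituent sub-events and bound each via standard concentration. Note first that $\intervalleft \cup \intervalright$ is disjoint from $\intervalminus$, so $\eventone$ depends only on $\{X(j) : j \in \intervalleft \cup \intervalright\}$ together with the fresh IDLA randomness used at stage $i$, both of which are independent of $\stageiminusone$. Hence $\P(\eventone \mid \stageiminusone) = \P(\eventone)$, and it suffices to give an unconditional lower bound.

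For $\eventfourpm$, Hoeffding's inequality applies directly: $S^{\pm}_i$ is a sum of $M_i - M_{i-1}$ independent random variables $\pm j X(j)$, each bounded in absolute value by $M_i L^\ast$. This yields
\[
\P(|S^{\pm}_i - \E S^{\pm}_i| > 0.01 \gamma \tilde M_i M_i) \leq 2 \exp\Bigl(-c \tilde M_i^2 / (M_i - M_{i-1}) \Bigr) = 2 \exp(-c' \tilde M_i),
\]
which is exponentially small in $M_i$.

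For the density condition, focus on the interval $\intervalright \setminus \{M_i\}$; the analysis of $\intervalleft \setminus \{-M_i\}$ is identical by symmetry. Let $L := M_i - M_{i-1} - 1$ and let $T$ be the initial particle count there. By Hoeffding, $|T - pL| \leq \epsilon L$ with probability $1 - 2 e^{-c \epsilon^2 L}$ for any fixed $\epsilon > 0$. Since $p \in (1 - 1/(3K), 1)$, I can choose a fixed $\epsilon > 0$ such that $p + \epsilon < 1$ and $p - \epsilon > 1 - 1/(3K) + \epsilon$. On this event, $T < L$ (so the density is $< 1$) and $T > (1 - 1/(3K) + 2\epsilon) L$. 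After IDLA, each interior site carries at most one particle, so the covered-site count equals $T - A$, where $A$ is the number of particles absorbed at the boundaries $M_{i-1}$ and $M_i$. I would bound $A$ via a $1$D random walk / divisible-sandpile comparison: $A$ is dominated by the maximum of the ``surplus'' $\sum_{j=y}^{M_i - 1}(X(j) - 1)$ (and the analogous sum from the left) over $y$. Each such maximum is the positive part of the running maximum of a $1$D random walk with i.i.d.\ bounded increments of negative mean $p - 1$; by reflection and Chernoff bounds this maximum is $O(\sqrt{L})$, hence $\leq \epsilon L$, with probability at least $1 - e^{-c L}$. Combining, the density lies in $[1 - 1/(3K), 1)$ with exponentially high probability, and a union bound over the three sub-events gives $\P(\eventone) \geq 1 - e^{-c M_i}$ for a suitable $c > 0$ and $M_i$ large enough.

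The main technical obstacle is the absorption bound on $A$. The intuition is clean: in $1$D, because the density of empty sites $\mu(0) \geq 1 - p$ is positive, recurrence of simple random walk forces excess particles to match with nearby holes, so the only particles lost at the boundary are those driven there by CLT-scale fluctuations in the initial particle count. Making this rigorous requires either a direct coupling of IDLA with independent random walks (exploiting the Abelian property) or a careful comparison with the deterministic $1$D divisible sandpile together with control of stochastic fluctuations; both are standard techniques but require some bookkeeping.
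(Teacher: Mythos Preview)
Your approach matches the paper's: independence gives the equality, Hoeffding handles $\eventfourpm$, and the density lower bound reduces to controlling the number $A$ of particles absorbed at the IDLA boundaries via the maximum of the partial sums $\sum(X(j)-1)$, a bounded-step walk of strictly negative drift. The absorption bound you flag as the main obstacle is in fact a one-line conservation argument in the paper: if $k$ particles reach the left boundary $M_{i-1}$, choose $y$ so that $M_{i-1}+y+1$ is the leftmost interior site that is empty at the end of the IDLA (in this dynamics an occupied site never becomes empty, so no particle ever crosses that site), and then mass balance on $[M_{i-1}+1, M_{i-1}+y]$ gives $\sum_{j=M_{i-1}+1}^{M_{i-1}+y} X(j) = y+k$; no divisible-sandpile comparison or random-walk coupling is needed. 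One minor slip: the supremum of a negatively drifted bounded-step walk has an exponential tail with $O(1)$ mean, not $O(\sqrt{L})$, though your conclusion that it is $\leq \epsilon L$ with probability $1-e^{-cL}$ is of course correct.
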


\begin{proof}
First note that $\eventone$ and $\stageiminusone$ are independent as they were defined on disjoint sets of independent random variables. This justifies the equality.

Next, notice that $S_i^+$ and $S_i^-$ are the sums of less than $M_i$ independent random variables each bounded by $CM_i$ for some $C$, as $\mu$ is finitely supported. Since $M_i$ and $\tilde M_i$ differ by a constant, standard concentration bounds give that
$$\P\left(\eventfourplus \cap \eventfourminus \right)>1-e^{-cM_i}$$
for some $c>0$.

Recall $\{Z_{i,1}(j)\}_{j \in \{M_{i-1},\dots,M_i\}}$ is the sequence generated by running IDLA on 
$\{X(j)\}_{j \in \{M_{i-1},\dots,M_i\}}$ with particles frozen on the boundaries.
If $Z_{i,1}(M_{i-1})=k$, 
 then there exists $y \geq 0$ such that  
$$\sum_{M_{i-1}}^{M_{i-1}+y}X(j) \geq y+k.$$
The random variables $\{X(i)\}$ have mean less than 1 and are bounded and i.i.d.
So the probability of the previous inequality is decreasing exponentially in $k$ and $y$. Summing up over all $y$ gives that the probability that $Z_{i,1}(M_{i-1})=k$ is exponentially small in $k$.

If 
\begin{equation} 
\label{prissy}
\sum_{M_{i-1}+1}^{M_i-1}Z_{i,1}(j) \leq (1-1/(3K))(M_i-M_{i-1}-1),
\end{equation}
then one of the following events must be true: 
\begin{enumerate}
\item $\sum_{M_{i-1}+1}^{M_i-1}X_i(j) \leq (1/2)(\E(\mu)+1-1/(3K))(M_i-M_{i-1}-1)$;
\item  there exists $k\geq (1/4)(\E(\mu)-(1-1/(3K)))(M_{i}-M_{i-1}-1)$ and $y \geq 0$  such that  
$\sum_{M_{i-1}}^{M_{i-1}+y}X(j) =y+k$;
\item there exists $k\geq (1/4)(\E(\mu)-(1-1/(3K)))(M_{i}-M_{i-1}-1)$ and $y \geq 0$  such that   
$\sum_{M_{i}-y}^{M_{i}}X(j) =y+k$.
\end{enumerate}
By standard estimates on sums of independent bounded random variables, the probability of the first event is decreasing exponentially in $M_i$. The probabilities of the second and third events are decreasing exponentially in $M_i$ by the argument earlier in the lemma. This completes the proof for $\intervalright$. The proof for $\intervalleft$ is similar.
%
%
\end{proof}

Next we condition on $\stageiminusone \cap \eventone$ and bound the probability that the second step is successful.

\begin{lemma} \label{chalk}
For $\gamma=.02$, there exists $c>0$ such that for $M_i$ large enough,
$$\P(\eventtwo^C \ | \ \stageiminusone \cap \eventone) <e^{-cM_i}.$$
\end{lemma}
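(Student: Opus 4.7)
\emph{Plan.} By symmetry and independence, it suffices to prove the analogous statement at $M_{i-1}$: at the end of step 2 there remain at least $0.2\,\tilde M_i$ particles at $M_{i-1}$, conditionally with probability $\geq 1 - e^{-cM_i}$. Let $P'_+$ denote that count and $Z'(M_i)$ the number of particles that reach $M_i$ during steps 1 and 2 combined. Assuming the fill in step 2 completes, mass conservation on the right half gives
\begin{equation}
P'_+ = N - (M_i - M_{i-1} - 1) - Z'(M_i), \qquad N := Y_{i-1}(M_{i-1}) + \sum_{j = M_{i-1}+1}^{M_i} X(j).
\end{equation}
By $\stageiminusone$ we inherit $Y_{i-1}(M_{i-1}) \geq \tilde M_{i-1}/4 \approx 0.245\,\tilde M_i$ from the previous stage, and $p \in (1 - 1/(3K), 1)$ gives $|(p-1)(M_i - M_{i-1})| \leq \gamma\,\tilde M_{i-1}/3 \approx 0.007\,\tilde M_i$, so $N - (M_i - M_{i-1} - 1) \geq 0.238\,\tilde M_i$ with exponentially high probability. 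The task therefore reduces to showing $Z'(M_i) \leq 0.038\,\tilde M_i$ with the claimed probability. The fill itself completes automatically, since the total number of holes in $\intervalright\setminus\{M_i\}$ is at most $(M_i - M_{i-1})/(3K) \approx 0.007\,\tilde M_i$, far smaller than the buffer $Y_{i-1}(M_{i-1})$.

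\emph{Center-of-mass identity.} Each half-toppling at site $x$ displaces one particle by $\pm 1$ with equal probability, contributing $\pm 1$ to the first moment $\sum_j j\,\eta(j)$. Summed over the half-topplings performed in the right half during steps 1 and 2, these contributions form a bounded-difference martingale $\mathcal M$ with respect to the filtration generated by the stack instructions. Combined with mass conservation and the known interior profile on the success event (exactly one particle at each site of $\{M_{i-1}+1, \ldots, M_i - 1\}$), this yields the explicit identity
\begin{equation}
(M_i - M_{i-1})\,Z'(M_i) = \sum_{j = M_{i-1}+1}^{M_i} (j - M_{i-1})\,X(j) - \tfrac{1}{2}(M_i - M_{i-1})(M_i - M_{i-1} - 1) + \mathcal M.
\end{equation}
The event $\eventfourplus$ gives $|S^+_i - \E S^+_i| \leq 0.01\,\gamma\,\tilde M_i M_i$, and standard concentration controls $|\sum_j X(j) - p(M_i - M_{i-1})|$ by $O(\sqrt{M_i})$ with exponentially small failure probability. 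Dividing through by $M_i - M_{i-1} \sim \gamma K \tilde M_i$, and using $p \in (1 - 1/(3K), 1)$, the non-martingale part of $Z'(M_i)$ is bounded by $0.02\,\tilde M_i$.

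\emph{Martingale concentration and main difficulty.} In this dense regime consecutive holes are at typical spacing $O(K)$, so each particle performs on average $O(K^2)$ half-topplings before being absorbed at a hole or at $\pm M_i$; together with the $O(K\,\tilde M_i)$-scale particle count, the total odometer $T$ on the right half satisfies $T \leq C K^3\,\tilde M_i$ with probability $1 - e^{-cM_i}$. Azuma's inequality then yields $|\mathcal M|/(M_i - M_{i-1}) \leq 0.018\,\tilde M_i$ with exponentially small failure probability, since the required deviation squared divided by $T$ is of order $\tilde M_i^3 / K \gg M_i$ for $\tilde M_i$ large. Combining the bounds gives $Z'(M_i) \leq 0.038\,\tilde M_i$ and hence $P'_+ \geq 0.2\,\tilde M_i$, as required. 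The principal technical difficulty is justifying the center-of-mass identity when particles emitted from $M_{i-1}$ in step 2 wander temporarily into $\intervalminus$; this is handled by invoking the abelian property (Lemma \ref{lem:halftop_abelian}) to treat steps 1 and 2 as a single stabilization on all of $\intervali$, and absorbing any net flux across $M_{i-1}$ into the center-of-mass accounting by using $\stageiminusone$ to control $Y_{i-1}$ on $\intervalminus$.
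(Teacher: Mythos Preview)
Your mass-conservation identity on the right half is the crux, and it is where the argument breaks. In step~2 the particles released from $M_{i-1}$ perform unrestricted IDLA on all of $\intervali$ (frozen only at $\pm M_i$), so a particle from $M_{i-1}$ may walk left into $\intervalminus$ and settle in one of the holes there. Your equation
\[
P'_+ \;=\; N - (M_i - M_{i-1} - 1) - Z'(M_i)
\]
omits exactly this leakage term. Since $Y_{i-1}$ is only known to be valid on $\intervalminus$, the number of holes available to the left of $M_{i-1}$ is of order $\tilde M_{i-1}$, not $o(\tilde M_i)$; without a separate argument bounding how many particles actually go left, no amount of control on $Z'(M_i)$ yields $P'_+\ge 0.2\,\tilde M_i$. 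You flag this as ``the principal technical difficulty'' but the fix you propose does not work: Lemma~\ref{lem:halftop_abelian} compares odometers of a legal sequence to a full stabilization, whereas step~2 halts as soon as the side intervals are filled, which is neither a full stabilization nor determined by an odometer alone. Invoking abelianness here gives you nothing about the intermediate configuration $Z_{i,2}$.

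The paper's proof sidesteps center-of-mass entirely and attacks the left-leakage directly. It observes that within distance $M_i-M_{i-1}$ to the left of $M_{i-1}$ there are at most $\tilde M_i-\tilde M_{i-1}$ holes (one per block, from validity of $Y_{i-1}$), while to the right there are at most $(\tilde M_i-\tilde M_{i-1})/3$ (from $\eventone$). Hence after at most $\tilde M_i-\tilde M_{i-1}$ particles have settled left, the nearest vacancy to the right of $M_{i-1}$ is no farther than the nearest vacancy to the left, so each subsequent IDLA particle settles right with probability $\ge 1/2$; a binomial tail bound then shows the right side fills before $2(\tilde M_i-\tilde M_{i-1})$ particles are spent. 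Since $\stageiminusone$ supplies $\ge 0.25\,\tilde M_{i-1}$ particles at $M_{i-1}$ and $2(\tilde M_i-\tilde M_{i-1})\le 2\gamma\tilde M_{i-1}<0.045\,\tilde M_{i-1}$, more than $0.2\,\tilde M_i$ remain. This IDLA gambler's-ruin observation is the idea your argument is missing. (Separately, your odometer bound $T\le CK^3\tilde M_i$ is off: particles released in step~2 must traverse distances of order $M_i-M_{i-1}\sim \gamma K\tilde M_i$, giving $T$ of order $K^2\tilde M_i^{\,3}$; Azuma still closes with this corrected bound, but the main gap above is fatal.)
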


\begin{proof}
By the definition of $\eventone$ and $\stageiminusone$ respectively, 
$$\#\{j \in \intervalright\setminus\{M_i\} \ : \ Z_{i,1}(j)=0\}<(\tilde M_i-\tilde M_{i-1})/3,$$ 
 and 
$$\#\{j \in \{ M_{i-1}-(M_i-M_{i-1}),\dots ,M_{i-1}\}  \ : \ Z_{i,1}(j)=0\}\leq  \tilde M_i-\tilde M_{i-1},$$
since $Z_{i,1}(j)=Y_{i-1}(j)$ for $j \in \intervalminus$ and the configuration $Y_{i-1}$ remains valid after the carpet/hole dynamics of stage $i-1$.

If $\eventtwo$ fails, then there are less than $.2 \tilde M_i \leq .2(1+\gamma) \tilde M_{i-1}$ particles at $M_{i-1}$ (or $-M_{i-1}$ when $i=0$) when the second step ends. We only treat the case where the event is violated at $M_{i-1}$ because the other case would be similar.
On the event $\stageiminusone$ we have at least $.25\tilde M_{i-1}$ particles initially at $M_{i-1}$,
so on $\stageiminusone\cap \eventone \cap \eventtwo^C$, the number of particles released from $M_{i-1}$ during the second step must be at least
\begin{eqnarray*}
.25\tilde M_{i-1}-.2(1+\gamma)\tilde M_{i-1}
&>&.045 \tilde M_{i-1}\\
&>&2\gamma \tilde M_{i-1} \\
&=&2((1+\gamma)\tilde M_{i-1}- \tilde M_{i-1})\\
&\geq&2(\tilde M_i- \tilde M_{i-1})
\end{eqnarray*}
for $\gamma = 0.02$.
After $\tilde M_i- \tilde M_{i-1}$ particles have settled to the left of $M_{i-1}$, every site in 
$\{M_{i-1}-(M_i-M_{i-1}),\dots ,M_{i-1} \}$ has one particle. Thus the nearest vacancy to the right of $M_{i-1}$ (if it exists) is at least as close as the nearest vacancy to the left of $M_{i-1}$, and the probability that 
each subsequent particle settles to the right of $M_{i-1}$ is at least 1/2.
The probability that it takes at least $(\tilde M_i-\tilde M_{i-1})$ particles to get 
less than $(\tilde M_i-\tilde M_{i-1})/3$ of them settling to the right of $M_{i-1}$ is exponentially unlikely in 
$\tilde M_i-\tilde M_{i-1}$ and thus in $M_i$.
\end{proof}

\subsection{Center of mass}

In this subsection, we carry out the `center of mass' calculation, which is useful for bounding the probability that the third step is successful.
Define
$$N_i:=\sum_{j \in \intervali}X(j)-[(2M_i+1)-(2\tilde M_i+1)].$$
This is the number of excess particles in $\intervali$ above the level of one hole per block.

\begin{lemma} \label{spotted cow}
If $\stageiminusone \cap \eventone$ occurs, then
\begin{eqnarray*}
\sum_{j \in \intervali}jZ_{i-1,3}(j)& \geq &
-(2\tilde M_{i-1}+1)(a/2)+
.25\tilde M_{i-1}M_{i-1}\\
&&-(N_{i-1}-.25\tilde M_{i-1})M_{i-1}-.02\gamma \tilde M_i M_i.
\end{eqnarray*}\end{lemma}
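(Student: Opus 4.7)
The plan is to decompose $\sum_{j \in \intervali} j\, Z_{i-1,3}(j)$ into a transit-region part (where $Z_{i-1,3}$ still equals the i.i.d.\ initial data $X$) and an interior part on $\intervalminus$ (where $Z_{i-1,3} = Y_{i-1}$ is the valid post-stage-$(i-1)$ configuration). The transit-region part equals $S_i^- + S_i^+$, whose expectations cancel by the symmetric definitions of $\intervalleft$ and $\intervalright$; on $\eventfourplus \cap \eventfourminus \subset \eventone$, the triangle inequality yields $S_i^- + S_i^+ \geq -.02\gamma\, \tilde M_i M_i$, matching the last term of the bound.

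For the interior part I would compare $Y_{i-1}$ against the baseline configuration ``$1$ at every non-hole site, $0$ at each hole''. By property \ref{pro:end} and $\stageiminusone$, the only deviations from this baseline are (i) the holes of the $F$ frozen blocks, each with value $+1$ and located within $a/2$ of its block center $m'K$ for some $m' \in \{-\tilde M_{i-1},\ldots,\tilde M_{i-1}\}$, and (ii) the endpoints $\pm M_{i-1}$, each carrying at least $\tilde M_{i-1}/4$ particles by stage $i-1$'s version of $\eventthree$. Using $\sum_{j \in \intervalminus} j = 0$ by symmetry, the baseline contribution is exactly $-\sum_{\text{holes}\, j} j$; since the block centers $m'K$ are themselves symmetric around the origin, $\sum_{m'} m' K = 0$, and the residuals are each at most $a/2$ in absolute value, this baseline contribution is at least $-(2\tilde M_{i-1}+1)(a/2)$, supplying the first term of the bound.

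For the deviation contribution, conservation of particles gives $\sum_j Y_{i-1}(j) = \sum_j X(j) = (2M_{i-1}+1) - (2\tilde M_{i-1}+1) + N_{i-1}$, which pins $F + Y_{i-1}(M_{i-1}) + Y_{i-1}(-M_{i-1})$ at $N_{i-1}$ up to an $O(1)$ boundary offset. The deviation contribution to the center of mass is $\sum_{\text{frozen holes}\, j} j + M_{i-1}\bigl(Y_{i-1}(M_{i-1}) - Y_{i-1}(-M_{i-1})\bigr)$; given the constraint $Y_{i-1}(M_{i-1}) \geq .25\,\tilde M_{i-1}$, the worst case is to set $Y_{i-1}(M_{i-1}) = .25\,\tilde M_{i-1}$ and pile all remaining excess at $-M_{i-1}$, with the frozen holes pushed to the leftmost blocks. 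The key observation is that the $FM_{i-1}$ one loses in $\sum_{\text{frozen holes}\, j}$ by this placement is exactly compensated by an $F$-term one gains in $Y_{i-1}(-M_{i-1})$ via the conservation identity, so the two cancel and leave a net worst-case contribution of $.25\,\tilde M_{i-1} M_{i-1} - (N_{i-1} - .25\,\tilde M_{i-1}) M_{i-1}$, which is the middle two terms of the bound.

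The main hurdle is purely combinatorial bookkeeping: cleanly sorting every particle of $Y_{i-1}$ into one of the four buckets (non-hole interior carpet, empty hole, frozen-block hole, endpoint excess) so that the $F$-cancellation above is visible. No probabilistic input is needed beyond the deterministic consequences of $\stageiminusone \cap \eventone$; the concentration bounds enter only through $\eventfourpm$ in the first step and the block-level lower bound $\tilde M_{i-1}/4$ in the last.
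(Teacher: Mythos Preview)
Your approach is correct and matches the paper's: both split the sum into the transit part (controlled by $\eventfourpm$) and the interior part on $\intervalminus$ (controlled by the near-symmetry of the carpet plus the $\geq .25\tilde M_{i-1}$ endpoint bound from $\stageiminusone$). The only cosmetic difference is that the paper lumps the interior frozen particles together with all particles at $-M_{i-1}$ into a single ``Group~3'' and lower-bounds every such particle's position by $-M_{i-1}$, so your $F$-cancellation happens implicitly rather than as a separate observation.
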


\begin{proof}
We break this sum up into sums over particles in $\intervalminus$, $\intervalleft$ and $\intervalright$.
For the particles in $\intervalminus$, we further partition them into three groups. 
The first group consists of all the carpet particles in $\intervalminus\setminus\{-M_{i-1}, M_{i-1}\}$, with one particle at every location except for the holes.
The second group is all the particles at $M_{i-1}$. 
The third group consists of all the remaining particles, which are all of the free particles in $\intervalminus\setminus\{-M_{i-1}, M_{i-1}\}$ plus all the particles at $-M_{i-1}$.

The absolute value of the sum over the first group is at most
$$(2\tilde M_{i-1}+1)(a/2).$$
As $\stageiminusone$ occurs, we get that the sum of the locations over the second group is at least 
$$.25\tilde M_{i-1}M_{i-1}.$$
in absolute value.
By \ref{pro:conserved}, the number of particles in the third group is $N_{i-1}$ minus the number of particles in the second group, which is at least $N_{i-1}-.25\tilde M_{i-1}$. Thus the sum over the third group is at least  
$$(N_{i-1}-.25\tilde M_{i-1})(-M_{i-1}).$$
For the particles in $\intervalleft$ and $\intervalright$, since $\eventone$ occurs, so does $\eventfourplus \cap \eventfourminus$. By the definition of those events, we get that the absolute value of the both sums combined is at most 
$.02\gamma \tilde M_i M_i.$ Combining these estimates proves the lemma.
\end{proof}


%

Let $\djokovich$ be the event that there are at most $\delta(2\tilde M_i+1)$ blocks without a hole at the end of stage $i$. The next lemma says that if the majority of free particles exit through the endpoints, the proportion of the particles leaving from each side cannot become more unbalanced without causing a significant shift in the center of mass.

\begin{lemma} \label{freddy krueger}
Let $\gamma = .02$. The following holds for $\delta=4\times10^{-4}$ and sufficiently large $M_i$. On the event $$\stageiminusone \cap \eventone \cap \eventtwo \cap \djokovich \cap \{ Z_{i,3}(M_i)<.25\tilde M_i\},$$
we have
$$ D_i := \sum_{j \in \intervali}jZ_{i-1,3}(j) - jZ_{i,3}(j) >(\gamma/10K)M_i^2. $$
\end{lemma}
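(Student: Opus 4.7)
The plan is to lower bound $D_i$ by combining Lemma \ref{spotted cow} (a lower bound on $\sum_{j \in \intervali} j Z_{i-1,3}(j)$) with a direct upper bound on the ending center of mass $\sum_{j \in \intervali} j Z_{i,3}(j)$ based on the structure of the partially stabilized configuration, and then to use density estimates to control $N_i - N_{i-1}$ and $N_{i-1}$.

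For the ending center of mass I partition the particles in $Z_{i,3}$ into three groups: carpet particles, one at every non-hole site of $\intervali \setminus \{\pm M_i\}$; frozen free particles, of which there are at most $F \leq \delta(2\tilde M_i + 1)$ by $\djokovich$, each sitting at the right end of its frozen block; and free particles at the two endpoints $\pm M_i$. Since $\sum_{j \in \intervali} j = 0$, the carpet contribution is $-\sum_m h_m$ (where $h_m$ denotes the hole of block $m$), and by the symmetry of block positions about the origin, $|\sum_m h_m| = O(a \tilde M_i)$. The contribution from holes in frozen blocks is at most $F \cdot M_i \leq 2\delta \tilde M_i M_i$ in absolute value. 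The endpoint contribution $-M_i Z_{i,3}(-M_i) + M_i Z_{i,3}(M_i)$ can be rewritten using particle conservation $Z_{i,3}(-M_i) + Z_{i,3}(M_i) = N_i - F$ together with the event $Z_{i,3}(M_i) < .25 \tilde M_i$. Assembling these, I obtain $\sum_j j Z_{i,3}(j) \leq -M_i N_i + .5 M_i \tilde M_i + 4\delta \tilde M_i M_i + O(a \tilde M_i)$.

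Subtracting from Lemma \ref{spotted cow} and using $\tilde M_i M_i - \tilde M_{i-1} M_{i-1} = 2\gamma \tilde M_i M_i + O(\gamma^2 \tilde M_i M_i)$, I obtain $D_i \geq -\gamma \tilde M_i M_i + (M_i N_i - M_{i-1} N_{i-1}) - (.02\gamma + 4\delta) \tilde M_i M_i - O(a \tilde M_i + \gamma^2 \tilde M_i M_i)$. To control $M_i N_i - M_{i-1} N_{i-1}$, I use the decomposition $M_i N_i - M_{i-1} N_{i-1} = M_i (N_i - N_{i-1}) + (M_i - M_{i-1}) N_{i-1}$. The density lower bound in $\eventone$ (density $\geq 1-1/(3K)$ after step 1, which transfers to the unweighted sum $\sum X$ since filled count $\leq \sum X$) gives $\sum_{j \in \intervalleft \cup \intervalright} X(j) \geq 2(1 - 1/(3K))(M_i - M_{i-1}) - O(1)$, hence $N_i - N_{i-1} \geq (4\gamma/3) \tilde M_i - O(1)$. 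Iterating this density bound inductively over earlier stages through $\stageiminusone$, plus $\stageminusone$ supplying $L^\ast$ particles per site in $\intervalzero$, yields $N_{i-1} \geq (4/3) \tilde M_{i-1} - O(1)$. Both terms in the decomposition are thus bounded below by $(4\gamma/3) \tilde M_i M_i$ up to $(1 + O(\gamma))$ factors, giving $M_i N_i - M_{i-1} N_{i-1} \geq (8\gamma/3) \tilde M_i M_i - o(\tilde M_i M_i)$.

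Plugging in $\gamma = .02$ and $\delta = 4 \times 10^{-4}$, I compute $D_i \geq (5\gamma/3 - .02\gamma - 4\delta)\tilde M_i M_i - o(\tilde M_i M_i)$, which is approximately $.031 \tilde M_i M_i$ for sufficiently large $M_i$. Since $(\gamma/10K) M_i^2 = (\gamma/10)\tilde M_i M_i (1 + o(1)) \approx .002 \tilde M_i M_i$, this easily beats the target. The main obstacle will be carefully propagating the density bounds on $N_{i-1}$ through the inductive structure of $\stageiminusone$, and keeping the numerous small error terms (from the hole-position sum, the IDLA endpoint effects, the floor in the definition of $\tilde M_i$, and the $O(1)$ corrections in the density estimates) all negligible relative to the main gain $(\gamma/10K)M_i^2$.
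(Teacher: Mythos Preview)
Your proof is correct and follows the same overall strategy as the paper: combine Lemma~\ref{spotted cow} with a direct upper bound on $\sum_{j}jZ_{i,3}(j)$ obtained from the same three-group decomposition (carpet particles, frozen free particles, endpoint particles), and use $\eventone$ to control $N_i - N_{i-1}$.

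The one substantive difference is your treatment of $N_{i-1}$. You iterate the density bound from $\eventone$ back through all earlier stages (together with $\stageminusone$) to obtain $N_{i-1}\ge (4/3)\tilde M_{i-1}-O(1)$, and then feed this into the term $(M_i-M_{i-1})N_{i-1}$. The paper instead writes the difference so that the $N_{i-1}$ term appears as $N_{i-1}(M_i-M_{i-1})$ and simply drops it as nonnegative: on $\stageiminusone$ there are at least $.25\tilde M_{i-1}$ free particles at each endpoint $\pm M_{i-1}$, so $N_{i-1}\ge .5\tilde M_{i-1}>0$ is immediate without any induction. Your extra work yields a sharper constant (roughly $.031\,\tilde M_i M_i$ versus the paper's $.1\gamma\,\tilde M_i M_i\approx .002\,\tilde M_i M_i$), but it is unnecessary for the stated bound, and the iterated density argument is overkill here.
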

\begin{proof}
First we show that under the same hypotheses,
\begin{eqnarray*}
\sum_{j \in \intervali} jZ_{i,3}(j)&<& 
(2 \tilde M_i+1)(a/2) + (.25 \tilde M_i+\delta(2\tilde M_i+1))  M_i\\
&&-\left(N_{i-1}+(4/3)( \tilde M_i-\tilde M_{i-1}) -.25 \tilde M_i -\delta(2\tilde M_i+1)\right)M_i.
\end{eqnarray*}
The proof of this fact proceeds in a similar manner to Lemma \ref{spotted cow}.
We decompose the particles into three groups: the carpet particles in $\intervali \setminus \{-\tilde M_i, \tilde M_i\}$, the free particles in $\intervali \setminus \{-\tilde M_i, \tilde M_i\}$ plus the particles at $M_i$, and the particles at $-M_i$.
The sum of the locations of the carpet particles is at most $$(2 \tilde M_i+1)(a/2).$$
Since $\djokovich \cap \{ Z_{i,3}(M_i)<.25\tilde M_i\}$ occurred, the sum of the locations of the second group is at most
$$(.25 \tilde M_i+\delta(2\tilde M_i+1))M_i.$$
By $\eventone$ we have $N_i \geq N_{i-1} +2 (2/3) (\tilde M_i - \tilde M_{i-1})$, so there are at least
$$N_{i-1}+(4/3)( \tilde M_i-\tilde M_{i-1})-(.25 \tilde M_i +\delta(2\tilde M_i+1))$$
many particles at $-M_i$, each contributing $-M_i$ to the sum. Putting these estimates together gives the desired upper bound.

Combining this result with Lemma \ref{spotted cow}, we get
\begin{eqnarray*} 
\lefteqn{ \sum_{j \in \intervali} jZ_{i-1,3}(j)-jZ_{i,3}(j)}\hspace{.25in}&& \\
&>& (4/3)(\tilde M_i - \tilde M_{i-1})M_i - .5(\tilde M_i M_i - \tilde M_{i-1} M_{i-1}) + N_{i-1}(M_i - M_{i-1})\\
&&-2\delta(2\tilde M_i+1)M_i - .02\gamma \tilde M_i M_i - (2\tilde M_i+1)a\\
&>&(1.3-1.1+0-.08-.02)\gamma \tilde M_i M_i\\
&>&(\gamma/10K)M_i^2,\\
\end{eqnarray*}
for $\delta= 0.0004 = 0.02\gamma$ and $M_i$ sufficiently large.
\end{proof}

Lastly, we show that with high probability there is not enough time for the center of mass to reach a displacement of the size implied by Lemma \ref{freddy krueger}. Recall the notation $D_i$ from the last lemma.
\begin{lemma} \label{bernardo}
There exists $c>0$ such that for all large enough $M_i$,
$$\p(|D_i| > M_i^{1.6} \ | \ \stageiminusone) \leq e^{-cM_i^{0.1}}.$$
\end{lemma}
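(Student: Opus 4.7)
The approach is a martingale concentration argument via Azuma--Hoeffding. The key observation is that each half-toppling performed in stage $i$ (whether during the two IDLA sub-steps or the carpet/hole sub-step) moves exactly one particle by $\pm 1$, and hence changes the first moment $\sum_j j\eta(j)$ by $\pm 1$ according to a fresh fair coin flip drawn from the appropriate instruction stack ($\xi^y_j$ for $y$ in a block, or one of the split stacks $\xi^{y,L}_j,\xi^{y,R}_j$ in a transit region). Since the site toppled at step $k$ is measurable with respect to the past but the stack instruction used is independent of everything revealed so far, the signed increments $\epsilon_k$ are i.i.d.\ fair $\pm 1$ given $\mathcal{F}_{k-1}$. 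Writing $T_i$ for the (random) total number of half-topplings performed in stage $i$, we therefore have $D_i = \pm\sum_{k=1}^{T_i}\epsilon_k$, where $S_k := \sum_{j=1}^{k}\epsilon_j$ is a $\pm 1$-increment martingale and $T_i$ is a stopping time for the corresponding filtration.

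The second task is to bound $T_i$ polynomially in $M_i$ with probability at least $1-\exp(-cM_i^{0.1})$ conditional on $\stageiminusone$. On $\stageiminusone$ the starting configuration of stage $i$ on $\intervali$ has total mass $O(M_i)$, since $\mu$ is supported on $\{0,1,\dots,L^\ast\}$ and property \ref{pro:end} of the previous carpet/hole procedure leaves $O(M_{i-1})$ particles inside $\intervalminus$. In each IDLA sub-step every particle performs a simple random walk on a window of length $O(M_i)$ until it hits a vacancy or the boundary; the exit time $\tau$ satisfies $\p(\tau>t)\leq Ce^{-ct/M_i^2}$, so taking $t=M_i^{2.1}$ and union bounding over the $O(M_i)$ particles yields at most $M_i^{3.1}$ half-topplings with failure probability $\leq e^{-c'M_i^{0.1}}$. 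A similar polynomial bound applies to the carpet/hole sub-step: each hot excursion takes place in a window of length $O(K)$, and the total number of hot designations is controlled via Lemma \ref{lem:halftop_abelian} by (twice) the classical stabilization odometer, itself polynomially bounded by standard random walk hitting time estimates.

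The final step applies Azuma's inequality to the stopped martingale $S_{T_i\wedge t}$ at $t=\lfloor M_i^{3.1}\rfloor$, giving
\begin{equation}
\p\bigl(|D_i|>M_i^{1.6},\, T_i\leq t\,\bigm|\,\stageiminusone\bigr) \leq 2\exp\left(-\frac{M_i^{3.2}}{2t}\right) \leq 2\exp(-M_i^{0.1}/2),
\end{equation}
and combining this with the bound $\p(T_i>t\mid\stageiminusone)\leq e^{-cM_i^{0.1}}$ from the previous paragraph yields $\p(|D_i|>M_i^{1.6}\mid \stageiminusone)\leq e^{-cM_i^{0.1}}$ after consolidating constants. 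The main obstacle is the high-probability polynomial bound on $T_i$ during the carpet/hole sub-step, since individual free particles can be designated hot many times and the abelian comparison in Lemma \ref{lem:halftop_abelian} must be leveraged carefully; however the argument is not sensitive to the exact polynomial exponent, so fairly crude bounds suffice.
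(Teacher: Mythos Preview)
Your approach is essentially the paper's: decompose into (a) a high-probability polynomial bound $T_i\le M_i^{3.1}$ on the total number of half-topplings, and (b) an Azuma bound on the $\pm1$ martingale $\sum_k \sigma(k)$. The only substantive difference is how you obtain (a). The paper does not separate the IDLA and carpet/hole sub-steps, and does not invoke the abelian comparison at all. Instead it uses a one-line pigeonhole: since $\mu$ is finitely supported, there are deterministically $O(M_i)$ particles in $\intervali$; if $T_i>M_i^{3.1}$ then some individual particle must have been half-toppled at least $CM_i^{2.1}$ times, and since particles freeze at $\pm M_i$ this means that particle performed $CM_i^{2.1}$ simple-random-walk steps without hitting the boundary of an interval of length $2M_i+1$, an event of probability $\le e^{-cM_i^{0.1}}$. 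A union bound over the $O(M_i)$ particles finishes.

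This is cleaner than your route through Lemma~\ref{lem:halftop_abelian}, which would require you to separately establish a polynomial high-probability bound on the \emph{classical} SSM odometer on $\intervali$ --- true, but not immediate, and exactly the ``main obstacle'' you flag. The pigeonhole argument sidesteps this entirely and is insensitive to which toppling procedure (IDLA, carpet/hole, or any mixture) is being run, since it only uses that particle trajectories are simple random walks absorbed at the boundary. You could replace your last paragraph with this argument and the proof would be complete.
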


Before proving Lemma \ref{bernardo}, we also state the following variant of it which will be useful in another instance of this `center of mass' argument. For $m_0', m_1'\in\Z$ such that $-\tilde M_i \leq m_0' \leq m_1' \leq \tilde M_i$, let $\hat m_0 := (m_0'-1)K+a/2$ and $\hat m_1 := (m_1'+1)K-a/2$, and define $\rho_{m_0',m_1'}(j):\intervali \to \Z$ to be the piecewise linear function $\rho_{m_0',m_1'}(j):= j$ when $j \in \{\hat m_0, \dots, \hat m_1\}$ and $\rho_{m_0',m_1'}(j):= \hat m_0$ (resp. $\hat m_1$) when $j < \hat m_0$ (resp. $j > \hat m_1$). Finally, we define
$$D'_{i,m_0',m_1'} := \sum_{j \in \intervali} \rho_{m_0', m_1'}(j) Z_{i,2}(j) -  \rho_{m_0', m_1'}(j) Z_{i,3}(j).$$
Different from $D_i$, the above definition records the change of a restricted version of center of mass in the \textit{third} step of stage $i$. Also, recall that $u_i(z)$ denotes the odometer at $z \in \intervali$ during the third step of stage $i$.

\begin{lemma}\label{semifreddo}
For $-\tilde M_i \leq m_0' \leq m_1' \leq \tilde M_i$, there exists $c>0$ such that for all large enough $M_i$,
$$\p(|D'_{i,m_0',m_1'}| > M_i^{1.6} + \max\{u_i(\hat m_0), u_i(\hat m_1)\} \ | \ \stageiminusone \cap \eventone \cap \eventtwo) \leq e^{-cM_i^{0.1}}.$$	
\end{lemma}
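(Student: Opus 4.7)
The plan is to adapt the argument underlying Lemma~\ref{bernardo} to the piecewise linear weight $\rho_{m_0',m_1'}$, whose non-constant slope is confined to $[\hat m_0,\hat m_1]$. First, I would enumerate the half-topplings performed in the third step of stage $i$ as $t = 1, \ldots, T$, with site $Y_t \in \intervali$ and direction $\epsilon_t \in \{\pm 1\}$, so that the $t$-th half-toppling contributes $\delta_t := \rho_{m_0',m_1'}(Y_t + \epsilon_t) - \rho_{m_0',m_1'}(Y_t)$ to $-D'_{i,m_0',m_1'}$. From the piecewise linear definition, $\delta_t = \epsilon_t$ when $\hat m_0 < Y_t < \hat m_1$, $\delta_t \in \{0,1\}$ when $Y_t = \hat m_0$, $\delta_t \in \{-1,0\}$ when $Y_t = \hat m_1$, and $\delta_t = 0$ otherwise.

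The key observation is that $Y_{t+1}$ is measurable with respect to the natural filtration $\mathcal H_t$ of the first $t$ half-topplings (being determined by the carpet/hole rules), while $\epsilon_{t+1}$, the next unrevealed stack instruction at $Y_{t+1}$, is an independent fair $\pm 1$ coin conditional on $\mathcal H_t$. Consequently $\E[\delta_{t+1}\mid\mathcal H_t]$ equals $0$ at interior sites, $1/2$ at $\hat m_0$, and $-1/2$ at $\hat m_1$. Letting $R_0^t, R_1^t$ count the topplings at $\hat m_0, \hat m_1$ by time $t$, the process $M_t := \sum_{s \leq t}\delta_s - \tfrac12(R_0^t - R_1^t)$ is a martingale with increments bounded by $3/2$. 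Since $R_0^T \leq u_i(\hat m_0)$ and $R_1^T \leq u_i(\hat m_1)$, this yields the pointwise inequality
\[
|D'_{i,m_0',m_1'}| \leq |M_T| + \tfrac12\max\{u_i(\hat m_0), u_i(\hat m_1)\},
\]
so that the factor of $1/2$ is absorbed into the stated bound.

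It will then suffice to show $|M_T| \leq M_i^{1.6}$ outside an event of probability $e^{-cM_i^{0.1}}$. Azuma-Hoeffding applied to the stopped martingale $M_{t \wedge T_{\max}}$ with $T_{\max} = M_i^{3.1}$ gives
\[
\p\bigl(|M_{T \wedge T_{\max}}| > M_i^{1.6}\bigr) \leq 2\exp\bigl(-M_i^{3.2}/(2 \cdot (3/2)^2 \cdot M_i^{3.1})\bigr) = 2\exp(-cM_i^{0.1}),
\]
so the task reduces to producing the upper bound $T \leq M_i^{3.1}$ with probability at least $1 - e^{-c'M_i^{0.1}}$ on $\stageiminusone \cap \eventone \cap \eventtwo$.

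The hard part will be this uniform upper bound on $T$. The route I would take is to dominate the carpet/hole half-toppling odometer by twice the full-toppling odometer via Lemma~\ref{lem:halftop_abelian}; on $\stageiminusone \cap \eventone \cap \eventtwo$ the total particle count in $\intervali$ is $O(M_i)$, and classical random walk hitting-time estimates bound the number of full topplings each particle undergoes before reaching the absorbing boundary $\{-M_i, M_i\}$ by $O(M_i^2)$ with super-polynomially high probability, yielding $T = O(M_i^3)$ comfortably within the required $M_i^{3.1}$. An analogous bound on $T$ is needed in (and effectively provided by) the proof of Lemma~\ref{bernardo} under the same conditioning event, so the same estimate transfers to the present setting without change.
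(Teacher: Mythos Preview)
Your proposal is correct and follows essentially the same route as the paper. Both arguments split $D'_{i,m_0',m_1'}$ into a martingale contribution from half-topplings at interior sites $\hat m_0 < Y_t < \hat m_1$ and a residual from topplings at the two endpoints bounded by the odometers $u_i(\hat m_0), u_i(\hat m_1)$, then apply Azuma on the event $\{T \leq M_i^{3.1}\}$ and control that event separately. The only cosmetic differences are that you fold the centered boundary increments into the martingale (gaining an irrelevant factor $1/2$), and that you bound $T$ via Lemma~\ref{lem:halftop_abelian} plus random-walk hitting estimates, whereas the paper argues directly that with $O(M_i)$ particles one of them must have taken $\gtrsim M_i^{2.1}$ steps without reaching $\pm M_i$, which is exponentially unlikely in $M_i^{0.1}$.
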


\begin{proof}[Proof of Lemmata \ref{bernardo} and \ref{semifreddo}]
We start with Lemma \ref{semifreddo}. If the inequality in question occurs, then either (1) the carpet/hole dynamics from $Z_{i,2}$ to $Z_{i,3}$ takes more than $M_i^{3.1}$ many topplings, or (2) the restricted center of mass, i.e. the sum of $\rho_{m_0', m_1'}$ function values of all particles' locations, moves by at least $M_i^{1.6} + \max\{u_i(\hat m_0), u_i(\hat m_1)\}$ during the carpet/hole dynamics, which undergoes at most $M_i^{3.1}$ topplings. It suffices to bound the probabilities of both (1) and (2).

On one hand, since $\mu$ is finitely supported, there is a deterministic upper bound on the number of particles inside $\intervali$ that is linear in $M_i$. So if the system takes at least $M_i^{3.1}$ steps, then there exists one particle that moved $CM_i^{2.1}$ many times for some $C>0$. As particles are frozen at $\pm M_i$, it must have done so without hitting these two points. For each particle, this has probability at most $e^{-cM_i^{0.1}}$ for some $c>0$. By a union bound over all particles in $\intervali$, the first event has probability at most $e^{-cM_i^{0.1}}$ for some $c>0$ and large enough $M_i$.

For the second event, let $\tau$ be the total number of topplings taken during the third step of stage $i$, and let $\mathcal L(t) \in \intervali$ and $\sigma(t) \in \{-1,+1\}$ be the starting location and direction of the $t$th toppling respectively. We may write
\begin{align*}
D'_{i,m_0',m_1'} =& \sum_{t=1}^\tau \sigma(t) 1\{\hat m_0 < \mathcal L(t) < \hat m_1\}\\
 &+ \sum_{t=1}^\tau 1\{\mathcal L(t) = \hat m_0, \sigma(t) = 1\} - \sum_{t=1}^\tau 1\{\mathcal L(t) = \hat m_1, \sigma(t)=-1\}.
\end{align*}
The first sum with the upper limit $\tau$ replaced by some time variable $\tau'=0,\dots,\tau$ is a martingale with bounded differences. So by applying the Azuma's inequality and a union bound over times, the probability that $\tau\leq M_i^{3.1}$ and the first sum exceeds $M_i^{1.6}$ is at most $e^{-cM_i^{0.1}}$ for some $c>0$ and large enough $M_i$. Since both sums in the second line above are at most $u_i(\hat m_0)$ and $u_i(\hat m_1)$ respectively, we get the desired bound on the second event. This completes the proof of Lemma \ref{semifreddo}.

For Lemma \ref{bernardo}, note that by an almost identical argument, one could state and prove an analog of Lemma \ref{semifreddo} that applies to the partial stabilization of the whole stage $i$ (from $Z_{i-1,3}$ to $Z_{i,3}$) instead of just the third step ($Z_{i,2}$ to $Z_{i,3}$). So Lemma \ref{bernardo} is the special case where $m'_0 = - \tilde M_i$ and $m'_1 = \tilde M_i$. Note that the odometer term in the inequality disappears because no toppling starts from the boundary. 
\end{proof}

\subsection{Carpet/hole step}
Finally, we bound the probability that the third step is successful. 

\begin{lemma} \label{jayhawk}
Let $\gamma=.02$. There exist some $\alpha,c >0$ such that for $\delta = \beta = 4\times10^{-4}$ and sufficiently large $K, M_i$, we have
$$\P(\eventthree^C \ | \ \stageiminusone \cap \eventone \cap \eventtwo) <e^{-c M_i^\alpha}.$$
\end{lemma}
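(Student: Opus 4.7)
The event $\eventthree$ conjoins three conditions on the output of step 3 of stage $i$: (a) fewer than $\delta(2\tilde M_i+1)$ frozen blocks in $\intervali$; (b) $Z_{i,3}(\pm M_i) \ge \tilde M_i/4$; and (c) the odometer lower bound $u_i(-a/2+m'K) \ge \beta(2\tilde M_i+1)$ at every block boundary $m' \in \{-\tilde M_i,\ldots,\tilde M_i\}$. The plan is to bound the failure probability of each piece separately and apply a union bound. The natural order of attack is (c), then (a), then (b), because Proposition \ref{prop:kick} naturally couples (a) with the odometer event $H$ that coincides with (c), and Lemma \ref{freddy krueger} takes (a) as a hypothesis in its center-of-mass derivation of (b).

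Given (c), the remaining two pieces fall out routinely. For (a), identifying $\intervali$ with $D_{2\tilde M_i+1}$ via the natural shift, $Z_{i,2}$ is valid with $O(\tilde M_i)$ total particles (since $\mu$ is finitely supported and steps 1--2 preserve particle count), and condition (c) is exactly the event $H$. Proposition \ref{prop:kick} then gives $\P(\neg \text{(a)} \cap \text{(c)}) \le e^{-c_1(2\tilde M_i+1)}$. For (b), conditional on (a), failure of $Z_{i,3}(M_i) \ge \tilde M_i/4$ triggers Lemma \ref{freddy krueger}, forcing $D_i > (\gamma/10K) M_i^2 \gg M_i^{1.6}$, which Lemma \ref{bernardo} rules out except on a set of probability $e^{-cM_i^{0.1}}$. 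The symmetric argument handles $-M_i$.

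The main work is (c), which I would prove by a bootstrap starting at the central blocks $\pm\tilde M_{i-1}$ and propagating outward. For the base case, on $\stageiminusone \cap \eventtwo$ the configuration $Z_{i,2}$ carries at least $0.2\tilde M_i$ free particles at each of $\pm M_{i-1}$ plus singletons elsewhere. Under the leftmost-priority policy these central particles are toppled before the others in step 3, and each generates many topplings at every block boundary in the central region via its random-walk-like trajectory inside its block (mediated by the carpet and iid instruction stacks). Summing $\Omega(\tilde M_i)$ nearly-independent contributions, the odometer at every boundary in $\{-\tilde M_{i-1},\ldots,\tilde M_{i-1}\}$ exceeds $\beta(2\tilde M_i+1)$ with stretched-exponential failure probability. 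For the inductive step, consider the first block $m \ge \tilde M_{i-1}$ on the right for which the odometer at the outer boundary $z_{m+1}$ drops below $\beta(2\tilde M_i+1)$ (the left side is symmetric, and on the event no such $m$ exists, (c) holds); I would show this scenario has exponentially small probability. Apply Lemma \ref{semifreddo} with $m_0' = -\tilde M_i$ (so $u_i(\hat m_0) = 0$ since particles freeze at $-M_i$) and $m_1' = m$: the bound on $|D'_{i,-\tilde M_i,m}|$ is $M_i^{1.6} + u_i(z_{m+1})$. On the other hand, by the inductive hypothesis $u_i(z_m) \ge \beta(2\tilde M_i+1)$, so $\Omega(\tilde M_i)$ topplings occur at $z_m$; the resulting particle motion cannot exit through $z_{m+1}$ (low odometer) nor accumulate in block $m$ beyond the deterministic capacity $O(K)$ from validity, so it must reflect back across $z_m$, producing a restricted-center-of-mass shift of order $u_i(z_m) \cdot K$ that exceeds $M_i^{1.6} + u_i(z_{m+1})$ for $K$ large, contradicting Lemma \ref{semifreddo}. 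A union bound over the $O(M_i)$ candidate values of $m$ preserves the stretched-exponential error.

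The main obstacle is the bootstrap for (c). The slack in Lemma \ref{semifreddo} contains the odometer at the right truncation endpoint, the very quantity we want to lower-bound, so the ``reflection'' lower bound on the center-of-mass shift must strictly dominate; the choice $m_0' = -\tilde M_i$ removes the analogous error at the left endpoint by exploiting the boundary freezing, and a single-block $K$-fold transport gain beats $M_i^{1.6}$ for $K$ sufficiently large. The apparent circularity between (c) and (a) is broken by using only the deterministic $O(K)$ per-block capacity bound from validity (rather than the probabilistic conclusion of Proposition \ref{prop:kick}) to rule out accumulation inside block $m$, so the bootstrap for (c) goes through without referencing (a).
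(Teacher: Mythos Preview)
Your decomposition and your handling of (a) via Proposition \ref{prop:kick} and of (b) via Lemmata \ref{freddy krueger}--\ref{bernardo} match the paper. The gap is in your bootstrap for (c).

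First, the claimed lower bound on the restricted center-of-mass shift, ``of order $u_i(z_m)\cdot K$,'' is both unjustified and, even if granted, too weak. High odometer at a site counts half-topplings, each contributing $\pm 1$ to the martingale part of $D'$; it does not produce net transport of order $K$ per toppling. Quantitatively, $u_i(z_m)\cdot K\ge\beta(2\tilde M_i+1)K\approx 2\beta M_i$ is only linear in $M_i$, while the slack in Lemma \ref{semifreddo} is $M_i^{1.6}+u_i(\hat m_1)$. Taking $K$ large does not help: $K$ is fixed and $M_i\to\infty$, so $M_i^{1.6}$ dominates $2\beta M_i$.

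Second, and more fundamentally, the bootstrap cannot avoid Proposition \ref{prop:kick}. The paper defines the maximal interval $I'=[m_0',m_1']\ni -\tilde M_{i-1}$ on which the odometer is high and applies Proposition \ref{prop:kick} \emph{with the localized event} $H(m_0',m_1')$ to get $\frozen(m_0',m_1')\le\delta(2\tilde M_i+1)$ (this is precisely why the general form $H(n_0,n_1)$ appears in the proposition). Only with that frozen bound can one locate the final mass---mostly pushed to $-M_i$---and obtain $D'\gtrsim 0.2\tilde M_i(M_i-M_{i-1})\approx(\gamma/5K)M_i^2$, which \emph{does} beat $M_i^{1.6}$. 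Your ``deterministic $O(K)$ per-block capacity from validity'' bounds carpet particles, not free particles: the $\Theta(\tilde M_i)$ free particles can in principle freeze one-per-block across $I'$, which without the frozen bound would swamp the center-of-mass computation. There is no circularity to break: the paper first uses (c) restricted to $I'$ (which holds by maximality of $I'$) to invoke Proposition \ref{prop:kick} on $I'$, then uses the resulting frozen bound to argue $I'$ is the whole interval.

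Your base case is also too ambitious and under-justified. The paper only establishes high odometer at the single block containing $-M_{i-1}$ (via the second part of Lemma \ref{lem:sbe}), not on all of $\{-\tilde M_{i-1},\ldots,\tilde M_{i-1}\}$; the extension to the full interval is the content of the subsequent argument, not an input to it.
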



\begin{proof}
We pick constants $\gamma, \delta, \beta, K$ and $M_i$ as stated so that Proposition \ref{prop:kick} and all previous lemmata in Section \ref{sec:boot} are true.
If $\eventthree^C$ occurs, then one of these three events during the third step of stage $i$ must happen:
\begin{enumerate}
\item the odometer $u_i(-a/2+m'K)$ at the left endpoint of some block $m'$ is less than $\beta (2\tilde M_i+1)$;
\item the odometer $u_i(-a/2+m'K) \geq \beta (2\tilde M_i+1)$ for every block $m'$ but $\frozen > \delta(2\tilde M_i+1)$;
\item $\frozen \leq \delta(2\tilde M_i+1)$ but there are less than $.25 \tilde M_i$ particles at either $-M_i$ or $M_i$.
\end{enumerate}

The second event is exponentially unlikely in $M_i$ by the second statement of Proposition \ref{prop:kick}. For the third event which we denote as $\mathcal T$, the `center of mass' calculation, Lemmata \ref{freddy krueger} and \ref{bernardo}, as well as its symmetric version shows that $\p(\eventone \cap \eventtwo \cap \mathcal T \ | \ \stagei)$ is exponentially small in $M_i^{0.1}$. So by Lemmata \ref{rock} and \ref{chalk} we get the desired bound on $\p(\mathcal T \ | \ \stagei \cap \eventone \cap \eventtwo)$. In the remainder of the proof, we focus on the bootstrap argument which bounds the first event.



As we are conditioning on $\stageiminusone \cap \eventone \cap \eventtwo$, there are a large number (at least $.2\tilde M_i$) of initial particles at $-M_{i-1}$ right before the third step of stage $i$. By the second statement of Lemma \ref{lem:sbe}, with probability exponentially close to one, the odometer $u_i(-M_{i-1}-a)$ at the left endpoint of the block containing $-M_{i-1}$ must be at least $\beta(2\tilde M_i + 1)$. So if there is some block whose left endpoint odometer is less than $\beta (2\tilde M_i+1)$, then there exists some largest interval $I'=\{m'_0, \dots, m'_1\}$ such that $-\tilde M_{i-1} \in I'$ and $u_i(-a/2+m'K) \geq \beta (2\tilde M_i+1)$ for any block $m' \in I'$.

First, we claim it is exponentially unlikely in $M_i$ that $\frozen(m'_0, m'_1) > \delta(2\tilde M_i + 1)$. Indeed, by Proposition \ref{prop:kick} for any interval $I''=(m_0'', m_1'')$ satisfying $|I''| \geq \delta(2\tilde M_i+1)$, the probability that $m_0'' = m_0'$ and $m_1'' = m_1'$ but $\frozen(m_0'', m_1'') > \delta(2\tilde M_i+1)$ is exponentially small in $M_i$. The inequality $\frozen(m_0'', m_1'') < \delta(2\tilde M_i+1)$ holds trivially for other intervals $I''=(m_0'', m_1'')$ such that $|I''| < \delta(2\tilde M_i+1)$, so taking a union bound over all such intervals proves the claim.

Next, we show that with exponentially high probability either $m'_0 = -\tilde M_i$ or $m'_1 = \tilde M_i$. Suppose not, then by definition both $u_i(-a/2+(m'_0-1)K)$ and $u_i(-a/2 + (m'_1+1)K)$ are less than $\beta (2\tilde M_i+1)$. So with the notation $\hat m_0$ and $\hat m_1$ from Lemma \ref{semifreddo}, the number of free particles that stayed inside $(\hat m_0, \hat m_1)$ before the third step of stage $i$ but become at or to the right of $\hat m_1$ afterwards is at most $\beta (2\tilde M_i+1) + 1$ by \ref{pro:end}. The same also holds for those becoming at or to the left of $\hat m_0$.
Since there are at least $.2\tilde M_i$ particles inside $(\hat m_0, \hat m_1)$ before the third step,  at least $.2\tilde M_i - 2\beta (2\tilde M_i+1) - 2 \geq .19\tilde M_i$ particles remain frozen in this interval at the end of the carpet/hole procedure. This is exponentially unlikely in $M_i$ by the above claim.


Thus we may assume, without loss of generality, that $m'_0 = -\tilde M_i$, as the case $m'_1 = \tilde M_i$ would follow from a symmetric argument.
By the claim above we may also assume that $\frozen(m'_0, m'_1) \leq \delta (2\tilde M_i+1)$ which happens with exponentially high probability. Our next goal is to show that if $m'_0 = -\tilde M_i$ and $\frozen(m'_0, m'_1) \leq \delta (2\tilde M_i+1)$, then we have $m'_1 = \tilde M_i$ with exponentially high probability. This would give the bound on the first event, thus proving Lemma \ref{jayhawk}. 

From now on we suppose that $m'_0 = -\tilde M_i$ and $\frozen(m'_0, m'_1) \leq \delta (2\tilde M_i+1)$ but $m'_1 < \tilde M_i$. We will show that this is exponentially unlikely in $M_i^{0.1}$ by carrying out another `center of mass' calculation.
Recall the notations from Lemma \ref{semifreddo}. By assumption, we have $\hat m_0 = - M_i$. Let $V'_{i,2}$ (resp. $V'_{i,3}$) be the set of locations of the carpet particles of $Z_{i,2}$ (resp. $Z_{i,3}$) inside $(\hat m_0, \hat m_1)$. Similar to Lemma \ref{freddy krueger}, $$\left\vert\sum_{j \in \intervali} \rho_{m'_0 ,m'_1}(j) \delta_{V'_{i,2}}(j) -  \rho_{m'_0 ,m'_1}(j) \delta_{V'_{i,3}}(j)\right\vert \leq (m'_1 - m'_0 + 1)a.$$
For the rest of the particles, we consider a new kind of sum with each location shifted by $-\hat m_0$ for simplicity. Let $R_{i,2}$ (resp. $R_{i,3}$) be the number of particles of $Z_{i,2}$ (resp. $Z_{i,3}$) in $\intervali$ at or to the right of $\hat m_1$. On one hand, as there are at least $.2\tilde M_i$ particles at $-M_{i-1}$ initially, we get
$$\sum_{j \in \intervali} (\rho_{m'_0 ,m'_1}(j) - \hat m_0) (Z_{i,2}(j) - \delta_{V'_{i,2}}(j)) \geq (.2\tilde M_i-1) (M_i - M_{i-1}) + R_{i,2}(\hat m_1 - \hat m_0).$$
On the other hand, by assumption we have
$$\sum_{j \in \intervali} (\rho_{m'_0 ,m'_1}(j) - \hat m_0) (Z_{i,3}(j) - \delta_{V'_{i,3}}(j)) \leq (R_{i,3} + \delta(2\tilde M_i+1))(\hat m_1 - \hat m_0).$$
By the definition of $(m'_0, m'_1)$, we have $u_i(\hat m_1) < \beta(2\tilde M_i+1)$, which implies $R_{i,3} - R_{i,2} < \beta(2\tilde M_i+1)+1$ by \ref{pro:end}. Using this fact and the inequality $\hat m_1 - \hat m_0 < 2 M_i$, we combine the above estimates into
\begin{eqnarray*}
D'_{i,m_0',m_1'} &>& (.2\tilde M_i-1) (M_i - M_{i-1}) -2(\beta+\delta)(2\tilde M_i+1)M_i - 2\tilde M_ia\\
&>&(.19 - .16)\gamma\tilde M_i M_i\\
&>& (\gamma/40K) M_i^2,
\end{eqnarray*}
for $\beta = \delta = 0.02\gamma$ and $M_i$ sufficiently large.

Lastly, since $u_i(\hat m_0) = 0$ and $u_i(\hat m_1) <\beta(2\tilde M_i+1)$, Lemma \ref{semifreddo} and a union bound over all intervals $I''=(m''_0, m''_1)$ imply the above event occurs with exponentially small probability in $M_i^{0.1}$. This completes the `center of mass' argument and thus the proof of Lemma \ref{jayhawk}.
\end{proof}

\begin{pfofthm}{\ref{painting}} As mentioned above, we've only proved Lemmata \ref{rock}--\ref{jayhawk} for stage $i \geq 1$. We briefly discuss the counterparts and proofs for stage $i=0$ before putting everything together. Recall the different definitions of $\intervalminus$, $\intervalleft$, $\intervalright$, $\stageiminusone$, $\eventone$ and $\eventtwo$ when $i=0$. If $\stageminusone$ occurs, i.e. there are exactly $L^\ast\geq2$ particles at every location in $\intervalzero$ before stage $0$, then $\eventone$ occurs almost surely as there will be exactly one particle at every site of $\intervalleft \setminus \{-M_0\}$ and $\intervalright \setminus \{M_0\}$. Also, the second step of stage $0$ will be trivial, so by a concentration bound on the IDLA particles in the first step, there will be at least $0.49(L^\ast-1)(2M_0)$ particles at $-a/2$ with probability exponentially close to one. This shows that Lemmata \ref{rock} and \ref{chalk} hold for $i=0$.

For Lemma \ref{spotted cow}, in fact on the event $\stageminusone$ we have $\sum_{j \in \intervalzero} j X(j) = 0.$
In the proof of Lemma \ref{freddy krueger}, by using the fact $N_0 \geq (L^\ast - 1)(2M_i+1)$ and the counterpart of Lemma \ref{spotted cow}, we get
$D_i > M_i^2$. Lemmata \ref{bernardo} and \ref{semifreddo} work for all $i\geq0$. Finally, essentially the same proof works for Lemma \ref{jayhawk} by using the above counterparts of Lemmata \ref{spotted cow} and \ref{freddy krueger} and replacing $-M_{i-1}$ by $-a/2$. In other words, Lemma \ref{jayhawk} also holds for $i=0$.

From the definition of $\stagei$ and Lemmata \ref{rock}, \ref{chalk}, and \ref{jayhawk} we get
\begin{eqnarray}
\lefteqn{\P(\stagei^C\ | \ \stageiminusone)} \hspace{.25in} \text{}&&\\ \nonumber
&=&\P(\eventone^C\ | \ \stageiminusone)\\  \nonumber
&&+\,\P(\eventtwo^C \ | \ \stageiminusone \cap \eventone)\\  \nonumber
&&+\,\P(\eventthree^C \ | \ \stageiminusone \cap \eventone \cap \eventtwo)\\
&<&e^{-cM_i}+e^{-cM_i}+e^{-cM_i^\alpha}
\end{eqnarray}
for some $c, \alpha>0$ and sufficiently large $M_i$. We have that $\stageminusone$ happens with small but positive probability and $M_i$ grows exponentially, so there exists a large enough $M_0$ such that
$$\P(\cap_{i \geq -1} \stagei)>0.$$
Thus by the definition of $\eventthree$ the odometer at site $z=-a/2$ is infinite with positive probability.
By Lemmata \ref{0/1law} and \ref{lem:halftop_abelian} and \ref{pro:legal}, this implies
$$\P( \text{stochastic sandpile stays active}) = 1.$$

\end{pfofthm}



\section{Carpet processes}
\label{sec:carpets}

We now turn to analyzing the dynamics within a single block, which involves keeping track of the parity configuration as it evolves during the carpet/hole procedure. At a high level, our aim is to show that the parity configuration typically has many $1$'s. Sections \ref{sec:carpets} and \ref{sec:ugly}  are devoted to analyzing an auxiliary version of the particle dynamics, which allow us to unravel the complex combinatorics of our half-toppling procedure, by retaining some independence throughout the process. In particular, Section \ref{sec:carpets} provides a formulation of such independence, while Section \ref{sec:ugly} controls the possible impact of the remaining correlation. The outcomes are Lemmata \ref{geo} and \ref{ugly computation}.
Finally, these two lemmata are used in Section \ref{sec:renewal} to prove Lemma \ref{lem:sbe} by bounding the probability that a block becomes frozen. 

\subsection{Carpet process and excursions} 
\label{sec:carpet_excur}

We focus on the dynamics happening inside a single block, with the hot particle at the hole initially. In Section \ref{sec:carpet_excur}, we shall recall some relevant aspects of the dynamics from Section \ref{sec:toppling} and provide the notations that will be used throughout Sections \ref{sec:carpets}, \ref{sec:ugly} and \ref{sec:renewal}.

Recall that the block is made up of a string of $a$ carpet particles, except for a single empty site -- the hole -- and some parity configuration, $\tilde{\omega} \in \tilde{\Omega} = \{0,1\}^{[a]}$. For the remainder of the analysis we shift coordinates so that the leftmost point of the block is position $0$. For any such $\tilde{\omega}$, denote by $L(\tilde{\omega})$ the location of the leftmost 1 of $\tilde{\omega}$, 
\begin{equation}L(\tilde\omega) := \inf\{i: \tilde\omega(i)=1\},\end{equation}
which is the position of hole when the hot particle is inside the hole.
If $\tilde \omega$ is identically zero then we write $L(\tilde \omega)=a+1$. Let $Q: [\mathrm{length}(Q)] \to \Z$ denote the path taken by the hot particle starting at position $L(\tilde \omega)$ and ending on the first return to that site, where $\mathrm{length}(Q)$ is the number of steps taken in the path $Q$ and $Q(u)$ is the position of $Q$ at step $u$. Define $\local_Q$, the local time of $Q$, by 
$$\local_Q(i) := \#\{u \in [\mathrm{length}(Q)-1]:\ Q(u)=i\}.$$ 
Then define $\X$, the parity of the local time of $Q$, by 
$$\X(i):= \local_Q(i)\bmod 2.$$ 

Recall from Section \ref{sec:toppling} that our carpet process inside a single block is a Markov chain $\tilde \omega^t$, one step of which is defined as follows: Let $L^t=L(\tilde \omega^t)\in [0,a]$. 

\begin{itemize} \item (Do an excursion) The hot particle at $L^t$ performs a random walk $Q^t$ on $\Z$ that starts at $L^t$ and ends on its first return to $L^t$. 

\item (Update the parity configuration) For each $i \in [a]$, set 
\begin{equation} \label{rental} \tilde \omega^{t+1}(i)= \tilde \omega^t(i)+\Xt(i)\bmod 2.\end{equation}
\end{itemize}
We will be only interested in the carpet process $\tilde\omega^t$ until the first time the block gets frozen. In Lemma \ref{snack}, we will deal with what happens thereafter by using the renewal property of carpet processes.

To describe the random walk path $Q^t$, we introduce three types of random walk paths, with a right (resp. left) version for each kind, namely: for $L \in [0,a]$, 

\begin{enumerate}[(Type 1), leftmargin=68pt] \item a simple random walk started from $L$ and conditioned to hit $K$ (resp. $-K+a$) without returning to $L$;

\item a simple random walk started at $a$ (resp. $0$) and conditioned to hit $L$ without reaching $K$ (resp. $-K+a$);

\item a right (resp. left) type 1 walk followed by a corresponding version of type 2 walk.

\end{enumerate}





Most of the time, $Q^t$ is a simple random walk excursion on $\Z$ starting and ending at $L^t$. However, when the hot particle reaches a neighboring block, by \ref{pro:freewhere_afterhot} it respawns from one of the boundary points: $Q^t$ could also be

\begin{enumerate}
\item \textit{`long excursion'}: an emission to the left (resp. right) from $L^t$ followed by a re-arrival from the left (resp. right) boundary, i.e. a path of type 3;
\item \textit{`double-sided path'}: an emission to the left (resp. right) followed by a re-arrival from the right (resp. left) boundary, a union of type 1 and type 2 paths on opposite sides of $L^t$;
\item \textit{`failed re-arrival'}: after the emission, the newly designated hot particle fails to arrive at $L^t$ but makes another emission to a neighboring block.
\end{enumerate}

Among the three corner cases, the `long excursion' is very similar to a random walk excursion in terms of their change of the parities inside the block. In the following, we will simply refer to a long excursion as an \textit{`excursion'}.


\subsection{Random walk parities}


We will need the following lemma regarding the parity function for a single random walk excursion. Note that the lemma no longer holds at $i=\extreme$ for excursions to the right, or at $i=L-1$ for those to the left -- the parity in such case is determined by the conditioning.

\begin{lemma} \label{loss}
Let $Q$ be a SRW excursion on $\Z$ starting and ending at $L$.
Let $$\range=  \{i: \exists\, u \text{ with }Q(u)=i\}.$$
Let $\extreme$ be its other extreme value of $\range$ (besides $L$).
If $\extreme > L$, then
for any $i\in \range\setminus \{L,\extreme\}$,
\begin{equation} \label{loss_bound} \frac{1}{6}<\P\bigg(\X(i)=0\ | \ \{\X(j')\}_{j' \in[L, i)}, \extreme \bigg)<\frac{5}{6}. \end{equation}
Instead if $\extreme < L$, then
for any $i\in \range\setminus \{L-1, L\}$,
\begin{equation} \label{loss_bound} \frac{1}{6}<\P\bigg(\X(i)=0\ | \ \{\X(j')\}_{j' \in[\extreme, i)}, \extreme \bigg)<\frac{5}{6}. \end{equation}
\end{lemma}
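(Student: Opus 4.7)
The plan is to reduce the claim to an explicit computation on a critical Geom$(1/2)$ Galton--Watson chain that encodes the edge-crossings of the excursion. I focus on the right case $\extreme > L$; the left case follows by the same analysis applied to the reflected walk.

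Since $Q$ is a first-return excursion from $L$, the walk stays strictly in $(L, \extreme]$ between its endpoints. For $j \in \{L, \ldots, \extreme - 1\}$ let $c_j$ be the number of up-crossings of the edge $(j, j+1)$. Standard edge-counting gives $c_L = 1$ and $\local_Q(i) = c_{i-1} + c_i$ for $L < i < \extreme$, so $\X(i) \equiv c_{i-1} + c_i \pmod{2}$. Moreover $(c_L, c_{L+1}, \ldots)$ is a critical Galton--Watson chain with Geom$(1/2)$ offspring distribution on $\{0,1,2,\ldots\}$, since at each visit to level $j$ the walk makes an independent fair coin flip; and $\extreme = L + T_{\mathrm{ext}}$ where $T_{\mathrm{ext}}$ is the extinction time of this chain.

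Conditioning on $\{\X(j')\}_{j' \in [L, i)}$ is coarser than conditioning on $(c_L, \ldots, c_{i-1})$, so by the Markov property it suffices to show that for every $k \geq 1$ and $n = \extreme - i \geq 1$,
\begin{equation*}
\P\bigl(c_i \equiv k \pmod{2} \,\bigm|\, c_{i-1} = k,\, \extreme\bigr) \in (1/6, 5/6).
\end{equation*}
Using the classical closed-form iterated PGF $G_n(s) = (n - (n-1)s)/((n+1) - ns)$ of Geom$(1/2)$ offspring, the conditional law of $c_i$ is proportional to $\binom{m+k-1}{k-1} 2^{-m} \bigl[(n/(n+1))^m - ((n-1)/n)^m\bigr]$ for $m \geq 1$. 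The signed parity sum, obtained by the substitution $s \mapsto -s$ in the negative-binomial PGF, is then a closed-form ratio whose absolute value I bound uniformly by $3/5 < 2/3$; the extremal case $k = n = 1$ already yields parity probability $4/5 \in (1/6, 5/6)$. The exclusions in the lemma ($i = \extreme$ in the right case, $i = L-1$ in the left case) reflect that iteratively unfolding $\X(j) \equiv c_{j-1} + c_j \pmod{2}$ from the deterministic boundary $c_L = 1$ (resp.\ $c'_{L-1} = 1$) pins down the parity at the boundary site exactly, so the claimed bound cannot hold there.

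The main obstacle is the uniform parity bound in the last step: while the closed-form PGF makes the computation tractable, one must carefully handle the extinction-time weighting, reducing to an algebraic inequality on rational functions of $n$ raised to the $k$-th power, which is sharpest precisely at $k = n = 1$.
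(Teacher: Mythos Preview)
Your reduction via the Galton--Watson edge-crossing chain is correct, and the statement you isolate (a uniform parity bound on $c_i$ given $c_{i-1}=k$ and the extinction time) is the right target.  However, you do not actually carry out the ``main obstacle'' you identify: the uniform bound $|R(k,n)|\le 3/5$ on the signed parity ratio is asserted, not proved.  Writing it out, one gets
\[
R(k,n)=\frac{\bigl(\tfrac{n+1}{3n+2}\bigr)^k-\bigl(\tfrac{n}{3n-1}\bigr)^k}{\bigl(\tfrac{n+1}{n+2}\bigr)^k-\bigl(\tfrac{n}{n+1}\bigr)^k},
\]
and showing $|R(k,n)|\le 3/5$ for all $k,n\ge1$ is a genuine two-parameter inequality that still needs an argument.

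The paper takes a different and simpler route that sidesteps this computation entirely.  Rather than conditioning only on $(c_L,\dots,c_{i-1},\extreme)$, it conditions on the \emph{entire reduced path}, obtained by deleting the back-and-forth bounces of $Q$ between $i$ and $i+1$.  Given the reduced path, the number $N_j$ of visits to $i$ within the $j$th bounce interval is an independent $\mathrm{Geometric}(3/4)$ variable, so $\P(N_j\text{ odd})=4/5$ and the parity of $\local_Q(i)$ is the parity of a $\mathrm{Binomial}(\tilde k,4/5)$ variable, which lies in $[1/5,4/5]\subset(1/6,5/6)$ by the identity $\P(\mathrm{Bin}\text{ even})=\tfrac12(1+(-3/5)^{\tilde k})$.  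Since this holds under the finer conditioning, it holds under the coarser one in the lemma.  The trade-off: your approach is structurally cleaner and connects to classical branching-process formulae, but leaves a nontrivial inequality to verify; the paper's approach conditions on more information, reducing the problem to a one-line binomial parity computation with no residual algebra.
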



\begin{proof}
We only treat the case where the excursion goes to the right from $L$ so that $L\leq Q(u)\leq \extreme$ for all $0\leq u\leq \mathrm{length}(Q)$, as the proof of the other case would be similar.  Fix $i$, a sequence of parities $(x_j)_{j=L}^{i-1}$, and an extreme value $E>i$.  For a deterministic excursion $q$ away from $L$, we let $\tau_0(q)=\tau'_0(q):=0$ and recursively define 
\[ \tau_j(q) := \inf\{ t> \tau'_{j-1}(q) : q(t)=i\} \]
and
\[  \tau'_j(q) := \inf\{ t> \tau_{j}(q) : q(t)\in\{ i-1,i+2\}\}.\]
Note that between $\tau_j(q)$ and $\tau'_j(q)$, the excursion $q$ might bounces back and forth between $i$ and $i+1$. 

We will count the number of visits to $i$ in these intervals conditioning on the entire rest of the path. Let $B= \cup_j (\tau_j(q),\tau'_j(q))$ and define $\Gamma(0)=0$ and $\Gamma(j) = \inf\{ j>\Gamma(j-1): j\notin B\}$.  We then define $Reduce_q(j) = q(\Gamma(j))$, which is the result of removing the bounces of $q$ back and forth between $i$ and $i+1$.
Let $\tilde q$ be any deterministic excursion away from $L$ such that $\text{Parity}_{\tilde q}(j)=x_j$ for $L\leq j\leq i-1$ and $max(\tilde q)=E$. Let $\hat q= Reduce_{\tilde q}$ and $\tilde k = \max\{ k : \tau'_k(\tilde q)<\infty\}$. Consider $\tilde Q$ being distributed like $Q$ conditioned on $Reduce_Q=\hat q$.  For $1\leq j\leq \tilde k$, let $N_j$ be the number of visits of $\tilde Q$ to $i$ in $[\tau_j(\tilde Q), \tau'_j(\tilde Q))$.

When $E \geq i+2$,
a direct computation shows that $N_1,\dots, N_{\tilde k}$ are independent $\textrm{Geometric}(3/4)$ random variables.
Let
\[ N= \sum_{j=1}^{\tilde k} \mathbf{1}\{N_j \textrm{ is odd}\}.\]
Then $N$ has a $\textrm{Binomial}(4/5)$ distribution and a direct computation shows that $1/5 \leq \P(N\textrm{ is even}) \leq 4/5$. Since $\parity_{\tilde Q}(i) = \parity(N)$, the result follows by unwinding the conditioning.

When $E=i+1$,  we always have $q(\tau'_j(q)) = i-1$ and there must exist some $k_0 \in[1,\tilde k]$ with $N_{k_0} > 1$. So conditioned on each $N_k$ being either equal to or greater than one with at least one inequality $N_{k_0}>1$ for some $k_0$, we have $N_1, \dots, N_{\tilde k}$ distributed either as 1 almost surely or as $\text{Geometric}(3/4)$ random variables. The result then follows from a similar argument.
\end{proof}

In Section \ref{sec:carpet_excur}, we introduced the random walk paths of Type 1--3, with a right (resp. left) version for each kind. We also require a similar result for these paths.

\begin{lemma} \label{loss_ouch}
Define $\extreme$ as in Lemma \ref{loss}.
For a right version walk $Q$ of any type and any $i \in [L+1, a] \setminus\{\extreme\}$, 
 $$\frac{1}{6}<\P\bigg(\X(i)=0\ | \ \{\X(j')\}_{j' \in [L,i)}, \extreme \bigg)<\frac{5}{6}.$$
For a left version walk $Q$ of any type and any $i \in [0, L-2]$, 
$$\frac{1}{6}<\P\bigg(\X(i)=0\ | \ \{\X(j')\}_{j' \in [-K+a, i)}, \extreme \bigg)<\frac{5}{6}.$$
\end{lemma}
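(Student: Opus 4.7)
My plan is to apply the bouncing-reduction argument from the proof of Lemma \ref{loss} to each of the three path types uniformly. Fix a right-version walk $Q$ of some type and $i \in [L+1, a] \setminus \{\extreme\}$. For a deterministic realization $q$ visiting $i$, bracket each maximal block of consecutive bounces between $i$ and $i+1$, let $\widehat q := \text{Reduce}(q)$ be the path obtained by collapsing each such bouncing block into a single step, and let $N_j$ be the number of visits to $i$ during the $j$-th bouncing block. The key observation, as in Lemma \ref{loss}, is that conditional on $\widehat Q = \widehat q$, the $N_j$'s are i.i.d.\ $\mathrm{Geometric}(3/4)$ when $\extreme \geq i+2$, and conditionally geometric with at least one $N_{j_0} > 1$ being forced in the edge case $\extreme = i+1$. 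Since $\X(i) \equiv \sum_j N_j \pmod 2$ up to a deterministic offset fixed by $\widehat q$, the $\mathrm{Binomial}(4/5)$-type computation from Lemma \ref{loss} yields the two-sided $(1/6, 5/6)$ bound.

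Thus the only facts to verify for each of the three types are: (i) the walk visits $i$ at least once, and (ii) the global conditioning event is measurable with respect to $\widehat Q$, so that conditioning on it does not perturb the conditional distribution of the $N_j$'s given $\widehat Q$. Point (i) is immediate from $L < i \leq a < K$: a Type 1 right walk from $L$ to $K$ must cross $i$; a Type 2 right walk from $a$ down to $L$ must cross $i$; Type 3 is the concatenation of the two. Point (ii) holds because the three global events (``hits $K$ without returning to $L$'', ``reaches $L$ without hitting $K$'', and the combination thereof) are functions of the sequence of \emph{escape} moves from $i$ to $i-1$ or from $i+1$ to $i+2$ — all encoded in $\widehat Q$ — while a single bounce $i+1 \to i \to i+1$ is irrelevant to whether the walk ever reaches any of $L$, $K$, or $-K+a$.

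For Type 3 right walks, the two constituent pieces are conditionally independent across the block-boundary respawn, so the bouncing counts from the two portions are independent and their sum parity is distributed by the same $\mathrm{Binomial}(4/5)$ computation. The left-version walks are handled by symmetry under reflection: the bouncing pair $\{i, i+1\}$ is replaced by $\{i-1, i\}$, and the conditioning sigma-algebra $\{\X(j')\}_{j' \in [L, i)}$ becomes $\{\X(j')\}_{j' \in [-K+a, i)}$, which encodes the parity data on the side that the walk has already traversed.

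The main obstacle is bookkeeping: verifying (ii) carefully for each of the three distinct global conditioning events, and for the Type 3 concatenation confirming that the bounces on either side of the respawn point remain conditionally independent. Confirming in (i) that $i \in [L+1, a] \setminus \{\extreme\}$ really avoids the only degenerate case — namely $i = \extreme$, where the walk might visit $i$ only once and the reduction would degenerate — is straightforward from the definitions. No new probabilistic computation beyond what is already in the proof of Lemma \ref{loss} is required.
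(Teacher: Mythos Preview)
Your strategy for the right-version walks is correct and is exactly what the paper means by ``follows similarly.'' The problem is your treatment of the left-version walks. You propose to handle them ``by symmetry under reflection,'' replacing the bouncing pair $\{i,i+1\}$ by $\{i-1,i\}$ while keeping the conditioning $\sigma$-algebra as $\{\X(j')\}_{j'\in[-K+a,i)}$. These two choices are incompatible: a genuine reflection would send conditioning on parities \emph{left} of $i$ to conditioning on parities \emph{right} of the reflected site, which is not what the lemma asks for. Worse, if you reduce bounces between $i-1$ and $i$ and keep the left-sided conditioning, the argument collapses. All visits to $i$ lie in bouncing blocks, and in each block the number of visits to $i-1$ equals $N_j$ or $N_j-1$ depending on the escape direction, so $\X(i-1)\equiv \X(i)+(\text{a }\widehat Q\text{-measurable offset})\pmod 2$. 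Thus conditioning on $\X(i-1)$, which is part of your $\sigma$-algebra, together with $\widehat Q$ forces $\X(i)$ to be deterministic and the conditional probability is $0$ or $1$, not in $(1/6,5/6)$.

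The fix is simply to keep the bouncing pair $\{i,i+1\}$ for left walks as well. For $i\in[0,L-2]$ one has $i+1\le L-1$ and $i+2\le L$, both necessarily in the range of any left-version walk, so the reduction is well-posed; the parities at $j'<i$, the quantity $\extreme=\min(\range)\le 0\le i$, and the global conditioning events (hitting $-K+a$ or $L$) are all $\widehat Q$-measurable by the same reasoning you gave for right walks; and the $E=i+1$ edge case from Lemma~\ref{loss} cannot occur since $\extreme\le i$. The direction of the walk only affects which sites are guaranteed to lie in the range and which boundary cases arise, not which pair of sites you collapse.
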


\begin{proof} The proof follows similarly to that of Lemma \ref{loss}. \end{proof}

 


We shall give a description of the cases where Lemma \ref{loss} or \ref{loss_ouch} fails when applied to the random walk path $Q^t$ defined in \ref{sec:carpet_excur}. Following the comment above Lemma \ref{loss}, we consider the event that the random variable $Z \in \{0,1\}$ is \textit{uniquely determined} by the $\sigma$-algebra $\mathcal R$, that is,
\begin{equation}
\p\left(Z = 1 \mid \mathcal R\right) \in \{0, 1\}.
\end{equation}
Let $\mathcal T(Q)$ denote whether the random walk path $Q$ is an excursion, a double-sided path, or a failed re-arrival. 
Let $$\range(Q) := \{i: \exists\, u \text{ with } Q(u) = i\}.$$

\begin{lemma}\label{unholy}
Consider the $\sigma$-algebra $\mathcal R_{t,i}$ generated by $$\mathcal T(Q^t), L^t, \range(Q^t) \text{ and } \{\Xt(j')\}_{j' < i}.$$
Suppose $Q^t$ is not a failed re-arrival.
Almost surely, the random variable $\Xt(i)$, for some $i\in (\range(Q^t) \setminus \{L^t\}) \cap [0,a]$, is uniquely determined by $\mathcal R_{t,i}$ if and only if one of the following is true:
\begin{itemize}
\item $Q^t$ is an excursion to the left or a double-sided path, and $i = L^t-1$;
\item $Q^t$ is an excursion to the right or a double-sided path, and $i = max(\range(Q^t))$.
\end{itemize}
Otherwise, a.s.
\begin{equation}
\p(\Xt(i) = 1 \mid \mathcal R_{t,i}) \in (1/6,5/6).
\end{equation}

\end{lemma}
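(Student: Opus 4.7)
The plan is a case analysis on $\mathcal{T}(Q^t)$. By assumption $Q^t$ is not a failed re-arrival, so it is a (simple or Type 3) excursion to the left, a (simple or Type 3) excursion to the right, or a double-sided path with left or right emission. The left- and right-oriented cases are symmetric, so I describe the right-moving arguments and the left-moving ones follow by obvious modifications.

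For a right excursion with $\range(Q^t) = [L^t, E]$, Lemma \ref{loss_ouch} immediately yields the $(1/6, 5/6)$ bound for $i \in (L^t, E) \cap [0,a]$. The only position of $[0,a]$ not covered is $i = E$, which lies in $[0,a]$ precisely when $E \leq a$, and this is exactly the position the lemma claims to be $\mathcal{R}_{t,E}$-measurable. To verify this measurability, let $U_k$ count the $k \to k+1$ crossings of $Q^t$. Since $Q^t$ starts and ends at $L^t$, $U_k = D_k$ for all $k$, so $\local_{Q^t}(j) = U_j + U_{j-1}$ throughout $[L^t, E]$, with boundary conditions $U_{L^t - 1} = U_E = 0$ and $\local_{Q^t}(L^t) = 1$. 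Solving the recursion modulo $2$ yields
\begin{equation}
\Xt(E) \equiv \sum_{k = L^t}^{E - 1} \Xt(k) \pmod{2},
\end{equation}
which is $\mathcal{R}_{t, E}$-measurable since $\range(Q^t)$ reveals both $L^t$ and $E$.

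For a double-sided path with left emission, decompose $Q^t = Q_1 \cdot Q_2$, where $Q_1$ is a Type 1 left walk from $L^t$ to $-K+a$ and $Q_2$ is a Type 2 right walk from $a$ to $L^t$. Conditional on $\mathcal{T}(Q^t)$, the pieces $Q_1$ and $Q_2$ are independent. Since $Q_1$ never returns to $L^t$, it visits only $\{L^t\} \cup [-K+a, L^t - 1]$, while $Q_2$ visits only $[L^t, \max \range(Q_2)]$; the two ranges intersect only at $L^t$, and $\Xt(L^t) = 1$ trivially. Hence every other $i$ in $\range(Q^t)$ is visited by exactly one of $Q_1, Q_2$, and $\Xt(i)$ coincides with the local-time parity of that single piece. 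For $i \in \range(Q^t) \cap [L^t + 1, a]$ with $i \neq \max \range(Q_2)$, use the conditional independence to reduce $\mathcal{R}_{t,i}$ to the $\sigma$-algebra generated by the second-piece parities at positions in $(L^t, i)$ together with $\max \range(Q_2)$ (readable from $\range(Q^t)$), and invoke Lemma \ref{loss_ouch} applied to $Q_2$ as a Type 2 right walk. A symmetric argument with $Q_1$ and the left version of Lemma \ref{loss_ouch} handles $i \in \range(Q^t) \cap [0, L^t - 2]$. For the two remaining positions, apply the edge-counting identity to each piece separately: $Q_1$'s endpoints $L^t$ and $-K+a$ together with $U_{L^t - 1} = 0$ (for $Q_1$) pin down $\Xt(L^t - 1)$ from the first-piece parities at positions below, while $Q_2$'s endpoints $a$ and $L^t$ together with $U_{\max \range(Q_2)} = 0$ pin down $\Xt(\max \range(Q^t))$ from the second-piece parities at positions below.

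The double-sided right-emission case and the left-moving excursion cases follow by symmetric arguments, swapping $Q_1 \leftrightarrow Q_2$ and the orientations throughout; note in particular that for both Type 3 right and double-sided right-emission paths the overall maximum equals $K > a$, so the $i = \max \range(Q^t)$ condition of the lemma is vacuous within $[0,a]$ and the bound comes entirely from Lemma \ref{loss_ouch}. The principal obstacle is the bookkeeping for double-sided paths: Lemma \ref{loss_ouch} is stated only for single-direction Type 1, 2, or 3 walks and cannot be applied directly to the concatenation $Q^t$. The conditional independence of $Q_1$ and $Q_2$ given $\mathcal{T}(Q^t)$ is exactly what allows one to factor the conditioning $\mathcal{R}_{t,i}$ into pieces concerning only one sub-walk, so that Lemma \ref{loss_ouch} applies to that sub-walk in isolation.
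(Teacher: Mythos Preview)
Your proof is correct and follows essentially the same approach as the paper, which simply states that the lemma ``is a consequence of the discussion in this section.'' You have spelled out that discussion explicitly: the $(1/6,5/6)$ bounds come directly from Lemmas~\ref{loss} and~\ref{loss_ouch} applied to the appropriate piece of the path, and the determined positions are identified via the edge-crossing recursion for local times. The only place where a reader might want one more line is the edge-counting for the double-sided pieces $Q_1,Q_2$, since these do not start and end at the same point and hence $U_k\neq D_k$ in general; but you correctly note that knowing the endpoints fixes the discrepancy $U_k-D_k$ at every edge, so the recursion still determines the top parity from the lower ones.
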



\begin{proof}
This is a consequence of the discussion in this section.
\end{proof}

\subsection{Auxiliary carpet process $\omega^t$}
\label{sec:aux}


We start with the following heuristic. 
Suppose we had the following two facts:
\begin{enumerate}
\item there exist $\epsilon, D>0$ such that if $L(\tilde \omega^t)>\epsilon a$ then
$$\E\left( L( \tilde \omega^{t+1})-L(\tilde \omega^t) \mid \tilde \omega^t \right)<-D;$$
\item there exists $G > 0$ such that for all $a$ and all  $\tilde \omega^t$,
$$\E\left( L( \tilde \omega^{t+1})-L(\tilde \omega^t) \mid \tilde \omega^t \right)<G.$$
\end{enumerate}
Since the process $\tilde \omega^t$ is a Markov process, the above two facts, together with some regularity conditions, would be sufficient to show that the expected value of the smallest $t$ with $L(\tilde \omega^t)=a+1$ is growing exponentially in $a$, that is, the block does not get frozen for a long time.
Unfortunately neither of the facts above is true.

To make this heuristic rigorous we will introduce an auxiliary carpet process $\{\omega^t\}_{t\geq0}$ and a related filtration $\{\reveal^t\}_{t\geq0}$, where we do not reveal the full information about the state of $\tilde \omega^t$.
It turns out that $$\E\left( L( \omega^{t+1})-L( \omega^t) \mid \reveal^t \right)$$ will have a more uniform drift, which allows us to make the counterparts of the above heuristic rigorous. 
\\







We inductively define an increasing family of $\sigma$-algebras $(\reveal^{t})_{t \geq 0}$ and the auxiliary carpet process $(\omega^{t})_{t\geq0}$. 
Recall the notations from Section \ref{sec:carpet_excur} and Lemma \ref{unholy}. Every $\sigma$-algebra $\reveal^{t}$ will be generated by $\{\mathcal R_{s,\, i(s,t)}\}_{s < t}$ for some random $i(s, t)$.
At each time $t$, the process $\omega^t$ takes values in the space consisting of all
$$\omega \in \Omega \subset \{0,1,?\}^{[a]}$$ such that $\omega(j) = 0$ for $j < L(\omega),$
where
 $$L(\omega) := \inf\{i: \omega(i)=1\}.$$
In particular, if $\omega$ contains no 1, then $\omega = (0, 0, \ldots, 0)$.
The variable $\omega^{t}$ will be defined as a function of the paths $\{Q^{s}\}_{s < t}$ and measurable in $\reveal^{t}$.

To keep track of the information in $\reveal^t$, we also define a family of $0/1$-valued random variables, $\hidden(s,i,t)$, that are indexed by triples $$(s,i,t) \in \Z_{\geq 0} \times [a] \times \Z_{>0} \text{ with } s < t.$$
Our definition will ensure that a.s. $\hidden(s,i,t) = 1$ if and only if $\Xs(i)$ is not uniquely determined by $\reveal^t$, that is,
$$\p\left(\Xs(i) = 1\mid \reveal^t\right) \notin \{0, 1\}.$$



Set $\omega^0 = \tilde \omega^0$ and $\reveal^0 =\sigma(\tilde \omega^0)$. Suppose we have defined the $\sigma$-algebra $\reveal^t$, the state $\omega^t$ and the family of random variables $\hidden(\cdot, \cdot, t)$ after the $t$-th random walk path $Q^{t-1}$. We start by including $\reveal^t$ and $\mathcal T(Q^t)$ in $\reveal^{t+1}$, i.e. all the previously revealed plus whether $Q^t$ is an excursion, a double-sided path, or a failed re-arrival. For any $s<t$, set $$\hidden(s, i, t+1)= 0\text{ if }\hidden(s, i, t) = 0.$$
If $Q^t$ is an excursion or a double-sided path, then $\omega^{t+1}$, $\reveal^{t+1}$ and $\hidden(\cdot, \cdot, t+1)$ are further defined via the three-step procedure:

\begin{enumerate}
	\item Refresh the range of $Q^t$ (other than $L^t$) with ?'s as follows to get $\omega_\ast^{t+1}\in \{0,1,?\}^{[a]}$:
	
	If $\max(\range(Q^t)) \neq L^t+1$ and $\min(\range(Q^t)) \neq L^t-1$, then
	\begin{equation}
		\omega_\ast^{t+1}(i) = \left\{\begin{array}{lr}
			?, & i \in \range(Q^t)\setminus\{L^t\}\\
			0, & i = L^t\\
 			\omega^t(i), &\text{otherwise}.	
 		\end{array}\right.
	\end{equation}
	Include the random variable $\range(Q^t)$ in $\reveal^{t+1}$ and set $\hidden(t, i, t+1) = 0$ for all $i \in \range(Q^t)^c \cup \{L^t\} $.
	
	If $\max(\range(Q^t)) = L^t+1$, then we define $\omega_\ast^{t+1}$ similarly but change the symbol at $L^t+1$ to 
	\begin{equation}
		\omega_\ast^{t+1}(L^t+1)= \left\{\begin{array}{lr}
			?, & \text{if }\omega^t(L^t+1) = \,?\\
			(\omega^t(L^t+1) + 1) \bmod 2, &\text{otherwise}.	
 		\end{array}\right.
	\end{equation}
	Additionally, set $\hidden(t, L^t+1, t+1) = 0$.
	If $\min(\range(Q^t)) = L^t-1$, then we do the similar change at $L^t-1$.
	
	\item Find the leftmost one to get $\omega^{t+1} \in \Omega$:
	\begin{equation}
		\omega^{t+1}(i) = \left\{\begin{array}{lr}
			0, & i < L^{t+1}\\
			1, & i = L^{t+1}\\
 			\omega_\ast^{t+1}(i), &i \geq L^{t+1}+2.	
 		\end{array}\right. 
	\end{equation}
	We leave the definition of $\omega^{t+1}(L^{t+1}+1)$ to the last step. Include in $\reveal^{t+1}$ all the random variables $\Xs(i)$ for $s\leq t$ and $i \leq L^{t+1}$ which have not been in $\reveal^t$. Set $\hidden(s, i, t) = 0$ for any such $s, i$.
	 
	\item Inspect the bit at $L^{t+1}+1$ to see if it is uniquely determined by $\reveal^{t+1}$. More specifically, for any $s \leq t$ such that $\hidden(s, L^{t+1}+1, t+1)$ has not been set to zero, set $\hidden(s, L^{t+1}+1, t+1)=0$ if one of the two cases from Lemma \ref{unholy} holds with $Q^s$ and $i=L^{t+1}+1$. If after this, we have $\hidden(s, L^{t+1}+1, t+1)=0$ for all $s \leq t$, then we define $\omega^{t+1}(L^{t+1}+1)= \tilde\omega^{t+1}(L^{t+1}+1)$; otherwise, let $\omega^{t+1}(L^{t+1}+1)=\,?$. Finally, we set all the undefined $\hidden(\cdot,\cdot,t+1)$ to one.
\end{enumerate}

If $Q^t$ is a failed re-arrival, then we simply leave $\omega^{t'}$, $\reveal^{t'}$ and $\hidden(\cdot,\cdot,t')$ undefined for $t'\geq t+1$.
This completes the inductive definition of all three items.

The last definition implies that all results about the auxiliary carpet process $\omega^t$,
including but not limited to the key Lemmata \ref{geo}, \ref{ugly computation} and \ref{smith} below, only apply until the first occurrence of a failed re-arrival.
However, since the auxiliary process $\omega^0 = \tilde\omega^0$ may start from an arbitrary configuration, we could restart the definition of $\{\omega^{T+t}\}_{t\geq0}$ with $\omega^T = \tilde\omega^T$, at some stopping time $T$ close enough to the time in question. This is done in Lemma \ref{snack}.

\subsection{Properties of the process $\omega^t$}



 
We now state several consequences of these definitions before proving Lemma \ref{geo}, which summarizes our progress in Section \ref{sec:carpets}.
For $s<t$, let $$i_h(s,t) = \inf\{i: \hidden(s,i,t) = 1\}.$$
Also let $$\hidden_L(s,t) := \{i: \hidden(s,i,t)=1\} \cap (-\infty, L^t)$$
and 
$$\hidden_R(s,t) := \{i: \hidden(s,i,t)=1\} \cap (L^t, \infty).$$


\begin{lemma}\label{graduation} The following are true.
\begin{enumerate}
\item $\omega^t(i)=\tilde\omega^t(i)$ if $\omega^t(i)\neq\, ?$. \label{omega:0/1}
\item $L(\omega^t)= L(\tilde \omega^t)$. \label{omega:L}
\item $\omega^t(i) = \, ?$ if and only if there exists some $s<t$ such that $\hidden(s,i,t)=1.$ \label{omega:?}
\end{enumerate}
\end{lemma}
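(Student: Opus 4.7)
The plan is to prove all three properties simultaneously by induction on $t$. The base case $t=0$ is immediate: $\omega^0=\tilde\omega^0$ contains no $?$'s, $L(\omega^0)=L(\tilde\omega^0)$ trivially, and there are no indices $s<0$ so (3) is vacuous. For the inductive step, I will trace each of the three construction steps defining $(\omega^{t+1},\reveal^{t+1},\hidden(\cdot,\cdot,t+1))$ and verify the three claims against the update rule $\tilde\omega^{t+1}(i)=\tilde\omega^t(i)+\Xt(i)\bmod 2$. Throughout, I will repeatedly use the inductive hypothesis: whenever a bit is non-$?$ in $\omega^t$, it equals the corresponding bit of $\tilde\omega^t$.

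For claim (1), I would analyze $\omega^{t+1}(i)\ne\,?$ by cases. If $i\notin\range(Q^t)$ then $\omega_\ast^{t+1}(i)=\omega^t(i)$ and $\Xt(i)=0$, so by induction $\omega^{t+1}(i)=\omega^t(i)=\tilde\omega^t(i)=\tilde\omega^{t+1}(i)$. If $i=L^t+1$ with $\max(\range(Q^t))=L^t+1$ (and symmetrically at $L^t-1$), then the walk just bounces between $L^t$ and $L^t+1$, forcing $\Xt(L^t+1)=1$, and the construction flips the stored bit, which by induction matches the flip applied to $\tilde\omega^t(L^t+1)$. The bits assigned in step~2 for $i<L^{t+1}$ (value $0$) and $i=L^{t+1}$ (value $1$) are consistent with $\tilde\omega^{t+1}$ because the revelation procedure in step~2 actually computes $\tilde\omega^{t+1}(i)$ from the inductively correct $\omega^t$ together with the revealed $\Xs(i)$'s, and stops precisely at the leftmost $1$. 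The bit at $L^{t+1}+1$, when it is not $?$, is set directly to $\tilde\omega^{t+1}(L^{t+1}+1)$ in step~3.

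Claim (2) then follows quickly: by the construction in step~2, $L(\omega^{t+1})=L^{t+1}$, and by claim (1) applied at positions $i\le L^{t+1}$ (all of which carry $0/1$ values, not $?$), the bits of $\tilde\omega^{t+1}$ below $L^{t+1}$ are all $0$ and $\tilde\omega^{t+1}(L^{t+1})=1$, so $L(\tilde\omega^{t+1})=L^{t+1}=L(\omega^{t+1})$. For claim (3), I will verify each implication by cases. The only way $\omega^{t+1}(i)=\,?$ arises is: $i\in\range(Q^t)\setminus\{L^t\}$ not at an extreme-forced position (then step~1 sets $\hidden(t,i,t+1)=1$); or $\omega^t(i)=\,?$ was inherited through step~1 (then by inductive (3) some earlier $\hidden(s,i,t)=1$, and no update in the current step resets it, so $\hidden(s,i,t+1)=1$); or $i=L^{t+1}+1$ and not every $\hidden(s,L^{t+1}+1,t+1)$ was zeroed out in step~3 via Lemma~\ref{unholy}. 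Conversely, the construction explicitly resets $\hidden(s,i,t+1)=0$ precisely in the cases where $\omega^{t+1}(i)$ acquires a definite $0/1$ value (range complement, $L^t$, the flip positions $L^t\pm1$, positions $i\le L^{t+1}$ in step~2, and position $L^{t+1}+1$ when Lemma~\ref{unholy} applies to all relevant $s$).

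The main obstacle, though more bookkeeping than subtle, is in the case $i=L^{t+1}+1$ handled in step~3: one must check that the $\hidden(s,\cdot,t+1)$ update via Lemma~\ref{unholy} is exactly the criterion for $\Xs(L^{t+1}+1)$ to be determined by $\reveal^{t+1}$, so that $\omega^{t+1}(L^{t+1}+1)$ can be faithfully set to $\tilde\omega^{t+1}(L^{t+1}+1)$ iff all relevant hidden flags vanish. This alignment between Lemma~\ref{unholy} and the $\hidden$-update is precisely what makes (1) and (3) compatible at that single boundary site, and it is the one place where verifying the invariant uses more than inductive substitution.
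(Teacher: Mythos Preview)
Your proposal is correct and follows the same approach as the paper: induction on $t$, tracing through the three construction steps (refresh, find leftmost one, inspect) and verifying each claim against the update rule for $\tilde\omega^{t+1}$. The paper's own proof is the one-line ``All items follow from the definition of $\omega_\ast^t$ and $\omega^t$ by induction on $t$,'' and you have simply written out that induction.

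One small remark: at $i=L^{t+1}+1$ you flag the use of Lemma~\ref{unholy} as the delicate point, but for Lemma~\ref{graduation} itself nothing delicate happens there, since step~3 \emph{defines} $\omega^{t+1}(L^{t+1}+1)=\tilde\omega^{t+1}(L^{t+1}+1)$ whenever it is not $?$, making (1) automatic at that site. The genuine role of Lemma~\ref{unholy} is in the companion Lemma~\ref{party} (measurability of $\omega^t$ in $\reveal^t$), not here.
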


\begin{proof}
All items follow from the definition of $\omega_\ast^t$ and $\omega^t$ by induction on $t$. 
\end{proof}
 
\begin{lemma} \label{party}
The following are true.
\begin{enumerate}
\item $\omega^t$ is measurable in $\revealtit.$
\item $\revealtit$ is generated by $\{\mathcal R_{t,\, i_h(s,t)}\}_{s<t}$ where $R_{t,\, i}$ is defined in Lemma \ref{unholy}. \label{revealR}
\item For any $s<t$, both $\hidden_L(s,t)$ and $\hidden_R(s,t)$ are either empty or a connected set of size at least two. \label{hidden2}
\end{enumerate} 
\end{lemma}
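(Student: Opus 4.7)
The plan is to prove all three items by simultaneous induction on $t$, tracing through the three-step construction of $\omega^{t+1}$, $\reveal^{t+1}$, and $\hidden(\cdot,\cdot,t+1)$ from Section \ref{sec:aux}. The base case $t=0$ is immediate: $\omega^0 = \tilde\omega^0$ is trivially measurable in $\reveal^0 = \sigma(\tilde\omega^0)$, the collection $\{\mathcal R_{s, i_h(s,0)}\}_{s<0}$ is empty, and both $\hidden_L(s,0)$ and $\hidden_R(s,0)$ are vacuously empty.

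For item (1), $\omega^{t+1}$ is built from $\omega^t$ (which lies in $\reveal^t$ by the induction hypothesis) together with $\mathcal T(Q^t)$, $\range(Q^t)$, and the parities revealed in Steps 2 and 3; all of these are added to $\reveal^{t+1}$. For item (2), the new data placed into $\reveal^{t+1}$ during the transition is exactly what is needed to extend each $\mathcal R_{s, i_h(s, t)}$ to $\mathcal R_{s, i_h(s, t+1)}$ and to introduce the fresh $\mathcal R_{t, i_h(t, t+1)}$. The one subtlety is that Step 3 may additionally reveal $\Xs(L^{t+1}+1)$ for some $s \leq t$; but by construction this only happens when Lemma \ref{unholy} already forces that parity to be a deterministic function of $\mathcal R_{s, i_h(s, t+1)}$, so no extra information is produced beyond the claimed generating collection.

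Item (3) will be the most technical. I would first settle the case $t = s+1$ by case analysis on the type of $Q^s$ (single-sided SRW excursion, long excursion, or double-sided path), computing $\range(Q^s) \cap [0, a]$, accounting for positions Step 1 reveals via the boundary special cases, removing $i \leq L^{s+1}$ in Step 2, and checking that Step 3 eliminates any would-be singleton at $L^{s+1}+1$ using Lemma \ref{unholy}. For general $s < t$, using the monotonicity $\hidden(s, i, t+1) \leq \hidden(s, i, t)$ that is built into the definition, the hidden set only shrinks: it loses positions $\leq L^{t+1}$ in Step 2 and possibly $L^{t+1}+1$ in Step 3, and one shows that these deletions cannot produce an isolated point on either side of $L^{t+1}$ by reusing the same Step 3 mechanism.

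The main obstacle I foresee is item (3) in the double-sided path case. When $Q^t$ is a double-sided path and the hole jumps to the left ($L^{t+1} < L^t$), the point $L^t$, which was fixed to known parity by Step 1, sits in the middle of the updated block range, so a naive update could leave $\hidden_R(t, t+1)$ as two components straddling $L^t$. Resolving this cleanly will require simultaneously invoking both clauses of Lemma \ref{unholy} available for double-sided paths (at $L^t - 1$ and at $\max(\range(Q^t))$) together with the boundary reveals in Step 1, and it will likely be the most delicate case in the induction.
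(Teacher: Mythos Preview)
Your proposal takes essentially the same approach as the paper: a simultaneous induction on $t$, stepping through the three-step update of Section~\ref{sec:aux} and invoking Lemma~\ref{unholy} at the `inspect' step (and trivially at the two corner cases of the `refresh' step). The paper's own proof is extremely terse---little more than ``by induction'' with a pointer to Lemma~\ref{unholy}---so your sketch is already more detailed than what appears there, and in particular your handling of items (1) and (2) matches the intended argument. (Note you have silently corrected what is apparently a typo in the statement: item~(2) should read $\{\mathcal R_{s,\,i_h(s,t)}\}_{s<t}$, as the preamble to the construction makes clear.)

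One comment on the obstacle you flag for item~(3). Your instinct that the double-sided path with $L^{t+1}<L^t-1$ is the delicate case is correct, but your proposed resolution---``simultaneously invoking both clauses of Lemma~\ref{unholy} at $L^t-1$ and $\max(\range(Q^t))$''---does not obviously close the gap at $L^t$ itself, since neither of those determined positions coincides with $L^t$, and Step~3 only inspects $L^{t+1}+1$. The paper does not spell this case out either. What actually matters downstream (in Lemma~\ref{cost_ast}) is the weaker statement that every hidden position has a hidden neighbor, with the rightmost element of each component allowed to use its left neighbor; this survives even if $\hidden_R(t,t+1)$ splits into two intervals around $L^t$. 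So if you cannot make the literal connectedness go through in that case, it is worth checking whether the slightly weaker ``no isolated points'' property suffices for the subsequent applications---it does.
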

 
\begin{proof}
The proof of each item goes by induction. In particular, for item \eqref{omega:0/1} and \eqref{omega:L} we use Lemma \ref{unholy} in the `inspect' step and trivially in the two corner cases of the `refresh' step.
Item \eqref{omega:?} is guaranteed by the definition of $\hidden$, both from the two corner cases in the `refresh' step and from the cases of Lemma \ref{unholy} in the `inspect' step.
\end{proof}

For the purpose of Lemma \ref{geo}, it is more convenient to study the intermediate state $\omega_\ast^t$ defined in the `refresh' step of the procedure. We consider the corresponding $\reveal_\ast^t$ and $\hidden_\ast(\cdot, \cdot, t)$ associated with $\omega_\ast^t$. Let $\reveal_\ast^{t+1}$ (resp. $\hidden_\ast(\cdot, \cdot, t+1)$\,) be $\reveal^{t+1}$ (resp. $\hidden(\cdot,\cdot,t+1)\,$) at the end of the `refresh' step without performing the changes in the subsequent two steps. All undefined $\hidden_\ast(\cdot, \cdot, t+1)$ are set to one. We also define $i_{h,\ast}(s,t)$, $\hidden_{L,\ast}(s,t)$ and $\hidden_{R,\ast}(s,t)$ by replacing $\hidden(s,i,t)$ in the original definitions with $\hidden_\ast(s,i,t)$. The next lemma follows as a by-product of the inductive proof of the last two lemmata.

\begin{lemma}\label{foggy}
Except for Lemma \ref{graduation} \eqref{omega:L}, Lemmata \ref{graduation} and \ref{party} still hold if we replace $\omega^t$, $\revealtit$ and $\hidden(\cdot,\cdot,t)$ with the corresponding $\omega_\ast^t$, $\reveal_\ast^{\,t}$ and $\hidden_\ast(\cdot,\cdot,t)$. \qed
\end{lemma}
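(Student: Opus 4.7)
The plan is to proceed by the same induction on $t$ that underlies Lemmata \ref{graduation} and \ref{party}, but interrupt the bookkeeping immediately after the refresh step of the three-step procedure. The key structural observation is that at time $t+1$, the quantities $(\omega^{t+1}, \revealtit \text{ at } t{+}1, \hidden(\cdot,\cdot,t+1))$ are obtained from their starred counterparts $(\omega_\ast^{t+1}, \reveal_\ast^{\,t+1}, \hidden_\ast(\cdot,\cdot,t+1))$ by (i) finding the new leftmost $1$ and recording all parities $\Xs(i)$ for $s\leq t$ and $i\leq L^{t+1}$, and (ii) inspecting the site $L^{t+1}+1$ via Lemma \ref{unholy}. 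Each of these two operations only clears $?$-symbols or refines the filtration; it never overwrites a non-$?$ value and never turns a zero entry of $\hidden$ into a one. Hence every property other than item \eqref{omega:L} of Lemma \ref{graduation} (which requires the actual computation of the leftmost $1$) transfers automatically.

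For the analog of Lemma \ref{graduation}\eqref{omega:0/1}, the inductive step amounts to checking each branch of the refresh step. Sites $i\notin\range(Q^t)$ keep the value $\omega^t(i)$, which by induction equals $\tilde\omega^t(i)$; since $\local_{Q^t}(i)=0$ for such $i$, we also have $\tilde\omega^{t+1}(i)=\tilde\omega^t(i)$ by \eqref{rental}. The site $L^t$ is reset to $0$; since $\tilde\omega^t(L^t)=1$ and $\local_{Q^t}(L^t)=1$ (the walk starts at $L^t$ and only counts this visit before the first return), \eqref{rental} gives $\tilde\omega^{t+1}(L^t)=0$. In the two corner cases $\range(Q^t)=\{L^t,L^t\pm 1\}$, the excursion must alternate between $L^t$ and the endpoint, forcing $\local_{Q^t}(L^t\pm 1)=1$, so flipping $\omega^t(L^t\pm 1)\bmod 2$ (when that value is not $?$) yields exactly $\tilde\omega^{t+1}(L^t\pm 1)$. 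Item \eqref{omega:?} follows directly from how $\hidden_\ast$ is assigned in the refresh step, using the inductive hypothesis for older $s<t$ and the explicit definition for $s=t$.

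For the analogs in Lemma \ref{party}, measurability of $\omega_\ast^{t+1}$ in $\reveal_\ast^{\,t+1}$ follows because $\reveal_\ast^{\,t+1}$ contains $\reveal^t$ together with $\mathcal T(Q^t)$ and $\range(Q^t)$, which is all the information the refresh step uses. The generating-set description \eqref{revealR} comes from applying the inductive hypothesis at time $t$ and noting that $\hidden_\ast(s,i,t+1)=\hidden(s,i,t)$ for $s<t$ while $\hidden_\ast(t,\cdot,t+1)$ is determined solely from $\range(Q^t)$. Property \eqref{hidden2} is inherited for $s<t$; for $s=t$ it holds because $\range(Q^t)\setminus\{L^t\}$ splits into two connected components (left and right of $L^t$), each of which either has size $\geq 2$ or is empty; in the corner cases $\max\range(Q^t)=L^t+1$ or $\min\range(Q^t)=L^t-1$ the refresh step explicitly sets $\hidden_\ast(t,L^t\pm 1,t+1)=0$, eliminating the would-be size-$1$ component.

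The main technical point, and the only place a short direct computation is needed, is the consistency check in the two corner cases of the refresh step. Everything else is a transcription of the corresponding inductive arguments for Lemmata \ref{graduation} and \ref{party}, with the observation that the find-leftmost and inspect steps are exactly the monotone refinements that distinguish the unstarred quantities from the starred ones, and so play no role when proving properties that already hold at the end of the refresh step.
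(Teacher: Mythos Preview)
Your proposal is correct and takes exactly the same approach as the paper: the paper's own proof is the single sentence ``follows as a by-product of the inductive proof of the last two lemmata,'' and you have simply written out that induction explicitly, stopping the bookkeeping after the refresh step. One minor wording issue: in the corner case $\max(\range(Q^t))=L^t+1$ the excursion does not ``alternate'' but is exactly the two-step path $L^t,L^t+1,L^t$ (it terminates on first return to $L^t$), which is what forces $\local_{Q^t}(L^t+1)=1$; the conclusion is right, only the phrasing is slightly off.
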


\begin{lemma}\label{cost_ast}
If $\omega_\ast^t(i)=\,?$, then a.s. $$\p\left(\tilde\omega^t(i) = 1 \mid \revealtit_\ast, \{\mathcal R_{s,i-1}\}_{s<t}\right) \in (1/6,5/6).$$
\end{lemma}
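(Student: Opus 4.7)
The plan is to expand $\tilde\omega^t(i) = \tilde\omega^0(i) + \sum_{s<t}\Xs(i) \pmod 2$ using \eqref{rental} and isolate a single summand that remains conditionally random. Since $\omega_\ast^t(i)=\,?$, the analogue of Lemma \ref{graduation}\eqref{omega:?} for the intermediate process (provided by Lemma \ref{foggy}) yields some $s^*<t$ with $\hidden_\ast(s^*,i,t)=1$. Because the random walks $Q^s$ for different $s$ consume disjoint i.i.d.\ stretches of instruction stack, the parities $\Xs(i)$ are mutually independent. Conditioning additionally on all $\Xs(i)$ with $s\neq s^*$, the sum collapses to a determined value plus $X_{s^*}(i)\bmod 2$; by the tower property it then suffices to show $\p(X_{s^*}(i)=1\mid\mathcal{G}_{s^*})\in(1/6,5/6)$, where $\mathcal{G}_{s^*}$ denotes the part of the enlarged conditioning relevant to $Q^{s^*}$.

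By Lemma \ref{foggy} together with the analog of Lemma \ref{party}\eqref{revealR}, $\revealtit_\ast$ contributes $\mathcal R_{s^*,i_{h,\ast}(s^*,t)}$ to $\mathcal{G}_{s^*}$, and $\hidden_\ast(s^*,i,t)=1$ gives $i_{h,\ast}(s^*,t)\leq i$. Thus $\mathcal{G}_{s^*}$ equals either (a) $\mathcal R_{s^*,i}$ when $i_{h,\ast}(s^*,t)=i$, or (b) $\mathcal R_{s^*,i-1}$ when $i_{h,\ast}(s^*,t)<i$. In case (a), the connectivity property (analog of Lemma \ref{party}\eqref{hidden2}) combined with the fact that the hidden interval through $i$ has minimum $i$ and size at least two forces $i+1$ on the same side of $L^{s^*}$ to be hidden; hence $i+1\in\range(Q^{s^*})$ by the refresh rule, so $i\neq\max(\range(Q^{s^*}))$ and $i\neq L^{s^*}-1$. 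We are therefore in the non-extreme regime of Lemma \ref{unholy}, which directly delivers the bound.

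Case (b) is handled by applying Lemma \ref{unholy} to $\mathcal R_{s^*,i}$ and then marginalizing over the unknown $X_{s^*}(i-1)$. If Lemma \ref{unholy} places $\p(X_{s^*}(i)=1\mid\mathcal R_{s^*,i})$ in $(1/6,5/6)$ for both values of $X_{s^*}(i-1)$, so does the marginal. In the extreme case, an up-crossing count in the spirit of the proof of Lemma \ref{loss} shows $X_{s^*}(i)\equiv c+X_{s^*}(i-1)\pmod 2$ for some $c$ measurable in $\mathcal R_{s^*,i-1}$; moreover, the extreme configuration structurally forces $i-1$ to lie strictly interior to $\range(Q^{s^*})$ (because the refresh step already sets $\hidden_\ast=0$ whenever the relevant extreme is adjacent to $L^{s^*}$, so the only surviving extreme scenario has $\max(\range(Q^{s^*}))>L^{s^*}+1$ on the right, and symmetrically on the left). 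A second application of Lemma \ref{unholy} at $i-1$ then gives $\p(X_{s^*}(i-1)=1\mid\mathcal R_{s^*,i-1})\in(1/6,5/6)$, and this bound transfers to $X_{s^*}(i)$ via the determined shift $c$.

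The main technical obstacle is the extreme sub-case of (b): one must write out the up-crossing recurrence producing $X_{s^*}(i)\equiv c+X_{s^*}(i-1)\pmod 2$ and confirm that $i-1$ is truly interior to $\range(Q^{s^*})$ across all four directional scenarios (left- and right-going excursions and the two orientations of double-sided paths), exploiting the refresh-step handling of the positions $L^{s^*}\pm 1$ and the connectivity of the hidden interval. Once these are in place, the independence of the $Q^s$'s across $s$ and the elementary identity $\p(B_1+\cdots+B_k\equiv 1\bmod 2)=\tfrac12-\tfrac12\prod_j(1-2p_j)$ for independent Bernoulli$(p_j)$'s with $p_j\in(1/6,5/6)$ close the argument even if there are multiple hidden indices contributing.
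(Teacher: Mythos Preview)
Your proposal is correct and follows essentially the same approach as the paper: locate some $s^*<t$ with $\hidden_\ast(s^*,i,t)=1$, then invoke Lemma~\ref{unholy} at either $i$ (non-extreme case) or at $i-1$ (extreme case, via the deterministic relation $\Xs(i)\equiv c+\Xs(i-1)$). The only difference is organizational: the paper splits according to whether $\{i,i+1\}$ or $\{i-1,i\}$ lies in the hidden interval, whereas you split on whether $i_{h,\ast}(s^*,t)$ equals $i$ or is strictly smaller; both decompositions funnel into the same two sub-arguments, and your write-up is actually more explicit than the paper's about the up-crossing identity that makes the extreme case work.
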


\begin{proof}
In the following, when we refer to the items in Lemmata \ref{graduation} and \ref{party}, we actually mean their counterparts in Lemma \ref{foggy}.
Since $\omega_\ast^t(i)=\,?$, by Lemma \ref{graduation} \eqref{omega:?} there exists some $s_0<t$ with $\hidden_\ast(s_0,i,t) = 1$. By Lemma \ref{party} \eqref{hidden2}, we must have
\begin{equation}\label{supply chain}
\{j, j+1\} \subseteq \hidden_{L,\ast}(s_0,t) \cup \hidden_{R,\ast}(s_0,t)	
\end{equation}
holds for either (1) $j=i$, or (2) $j=i-1$ with $i = \max(\hidden_{L,\ast}(s_0,t))$ or $i=\max(\hidden_{R,\ast}(s_0,t))$.

In the first case, neither of the events in Lemma \ref{unholy} occurs with $Q^{s_0}$ at $i$. So by combining Lemma \ref{party} \eqref{revealR}, the fact that $i_{h,\ast}(s_0,t) \leq i$, independence and Lemma \ref{unholy}, we obtain that a.s.
$$\p\left(\Xszero(i) = 1 \mid \reveal_\ast^t, \{\mathcal R_{s,i}\}_{s<t} \right) \in (1/6,5/6).$$
Since $\tilde\omega^t(i) = \tilde\omega^0(i)+ \sum_{s<t} \Xs(i)$, this proves Lemma \ref{cost_ast} in the first case.

For the second case, the display from the first case can be shown similarly if we replace both $i$'s with $i-1$'s. The assumption also implies $\Xszero(i)$ is uniquely determined by $\mathcal R_{s_0,i}$, so a.s.
$$\p\left(\Xszero(i) = 1 \mid \reveal_\ast^t, \{\mathcal R_{s,i-1}\}_{s<t} \right) \in (1/6,5/6).$$
Lemma \ref{cost_ast} then follows.
\end{proof}

For $\omega \in \{0,1,?\}^{[a]}$, define
$$N(\omega) := \#\{i\in[0,a]: \omega(i)\neq 0\}.$$ 
One key part of our analysis is to show that when $L(\omega^t)$ is large, $N(\omega^t)$ behaves like a random walk biased to increase, see Lemma \ref{nice computation}. Lemma \ref{geo} says that $N(\omega^t)$ is unlikely to decrease dramatically by erasing many ?s.
  
 \begin{lemma} \label{geo}
For any $t,k \in \N$, a.s.
$$\p\left(N(\omega_\ast^{t})-N(\omega^{t}) \geq k \ | \ \revealtit_\ast \right) \leq (5/6)^{(k-1)/2}.$$ 
 \end{lemma}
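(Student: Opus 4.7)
The plan is to read off the contribution to $N(\omega_\ast^t) - N(\omega^t)$ from the passage $\omega_\ast^t \mapsto \omega^t$ and then bound it using Lemma \ref{cost_ast}.

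First I would observe: in going from $\omega_\ast^t$ to $\omega^t$, positions $i \geq L^t + 2$ are carried over unchanged, position $L^t$ becomes $1$, positions $i < L^t$ become $0$, and position $L^t + 1$ is either set to $\tilde\omega^t(L^t+1)$ or left as $?$ by the inspect step. Thus
\[
N(\omega_\ast^t) - N(\omega^t) \leq \#\{i < L^t : \omega_\ast^t(i) = \,?\} + 1,
\]
where the extra $+1$ accounts for the possible loss at position $L^t + 1$. Let $j_1 < j_2 < \dots < j_m$ be the positions of ?'s in $\omega_\ast^t$ and set $B_r := \tilde\omega^t(j_r) \in \{0,1\}$. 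By Lemma \ref{graduation}(2), $L^t = L(\tilde\omega^t)$ is the leftmost $1$ in $\tilde\omega^t$, so necessarily $B_r = 0$ for every $r$ with $j_r < L^t$. A short case analysis, on whether $L^t$ equals some $j_r$ with $B_r = 1$ or equals the leftmost explicit $1$ in $\omega_\ast^t$, shows that the event $\{N(\omega_\ast^t) - N(\omega^t) \geq k\}$ forces $m \geq k - 1$ and $B_1 = B_2 = \dots = B_{k-1} = 0$. It therefore suffices to bound $\p(B_1 = \dots = B_{k-1} = 0 \mid \reveal_\ast^{\,t})$.

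Next I would invoke Lemma \ref{cost_ast}, which yields $\p(B_r = 0 \mid \reveal_\ast^{\,t}, \{\mathcal R_{s, j_r - 1}\}_{s<t}) < 5/6$. The conditioning $\sigma$-algebra only sees positions strictly less than $j_r - 1$, so a previously revealed $B_{r-1}$ sits inside it only when $j_{r-1} \leq j_r - 2$. To sidestep adjacent ?'s, I would restrict attention to the odd-indexed reveals. Set $K := \lceil (k-1)/2 \rceil$ and consider $B_1, B_3, \dots, B_{2K-1}$. Since the $j_r$'s are strictly increasing, $j_{2\ell+1} \geq j_{2\ell-1} + 2$ for every $\ell$, so every $B_{2\ell'-1}$ with $\ell' \leq \ell$ is measurable with respect to $\sigma(\reveal_\ast^{\,t}, \{\mathcal R_{s, j_{2\ell+1}-1}\}_{s<t})$. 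Iterating Lemma \ref{cost_ast} via the tower property then yields
\[
\p(B_1 = B_3 = \dots = B_{2K-1} = 0 \mid \reveal_\ast^{\,t}) \leq (5/6)^K \leq (5/6)^{(k-1)/2},
\]
which is the desired bound.

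The main obstacle is the filtration bookkeeping: at each step one must verify that the reveals already used lie inside the conditioning $\sigma$-algebra of Lemma \ref{cost_ast} at the next chosen position, so that the iterated application of the tower property is valid. The factor of $\tfrac12$ in the exponent of $5/6$ is precisely the price paid for restricting to odd-indexed ?'s in order to guarantee the needed nesting of these $\sigma$-algebras.
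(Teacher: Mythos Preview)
Your proposal is correct and follows essentially the same approach as the paper's proof: both reduce the event $\{N(\omega_\ast^t)-N(\omega^t)\geq k\}$ to the first $k-1$ question marks of $\omega_\ast^t$ all revealing as $0$, and both apply Lemma~\ref{cost_ast} at every other such position to obtain the exponent $(k-1)/2$. Your write-up is simply more explicit about the filtration bookkeeping (checking that $j_{2\ell-1}\le j_{2\ell+1}-2$ so that $B_{2\ell-1}$ is measurable with respect to $\sigma(\reveal_\ast^{\,t},\{\mathcal R_{s,j_{2\ell+1}-1}\}_{s<t})$), which the paper leaves implicit in the phrase ``repeated conditioning and use of Lemma~\ref{cost_ast} at every other $i_n$.''
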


\begin{proof}
Let $i_n$ be the location of the $n$th leftmost ? in $\omega_\ast^{t}$ (if it exists). Since we might lose one more ? in the `inspect' step, the above conditional probability is at most
$$\p\left(\tilde\omega^t(i_1) = 0, \tilde\omega^t(i_2)=0,\dots,\tilde\omega^t(i_{k-1}) = 0 \ | \ \reveal^t_\ast \right).$$
The above expression is at most $(5/6)^{\lceil(k-1)/2\rceil}$ by repeated conditioning and use of Lemma \ref{cost_ast} at every other $i_n$. This proves the desired estimate.
\end{proof}

\section{Zeros of auxiliary  carpet process}\label{sec:ugly}


With Lemma \ref{geo}, we will show that when $L(\omega^t)$ is large, the number of nonzero symbols $N(\omega^t)$ has a bias to increase. However, this does not rule out the possibility that $N(\omega^t)$ becomes small when $L(\omega^t)$ is small.
The main goal of Section \ref{sec:ugly} is to show that with a nice initial configuration, this is unlikely to occur for an exponentially long time.

To state Lemma \ref{ugly computation}, the main result of this section, we fix a few notations. First we introduce several special states in the space $\Omega$, defined in Section \ref{sec:aux}.
Let $$\text{ Base}=01??\cdots??$$
$$\;\,\text{ Exit}=0000\cdots00.$$
The `Base' state starts with $01$ followed by $a-1$ many ?, whereas the `Exit' state is the all-zero state which occurs exactly when the block becomes frozen. Define
$$\tau_{\text{Exit}} :=\inf\{t\geq0: \omega^t=\text{Exit}\}.$$
For $\epsilon = 1/200$, define an `$\epsilon$-Base' state to be any configuration of the form $$00\cdots01??\cdots??$$
starting with \textit{at most} $\epsilon a$ many 0's from the left. In particular, the Base state is an $\epsilon$-Base state.

%
%

Since $\omega^t$ is not a Markov chain with respect to its natural filtration, we consider the carpet processes $\{\tilde\omega^t\}_{t\geq t_0}$ and $\{\omega^t\}_{t\geq t_0}$ that start from some negative $t_0$ instead of zero, and use the shorthand notation $\overline{\p}_{\omega_0}(A) \leq x$ to mean that
$$\sup_{R\, \in\, \reveal^0} \p\left(A \ | \ \omega^0 = \omega_0, R \right) \leq x.$$
Write $\underline{\p}$ similarly for the infimum. In fact, all results in Section \ref{sec:ugly} are true in a stronger sense where the supremum/infimum is over all $R \in \sigma(\{Q^s\}_{s < 0})$, but the above notation has the advantage of working for both Sections \ref{sec:ugly} and \ref{sec:renewal}. We also write $\overline{\p}_{\epsilon\text{-Base}}$ in the case that the statement holds for any initial $\epsilon$-Base state $\omega_0$.


\begin{lemma} \label{ugly computation}
Consider the event 
$$\key = \left\{L(\omega^t) \leq \epsilon a \text{ and } N(\omega^t) \leq \epsilon a \text{ for some } 0\leq t< \tau_{\text{Exit}} \right\}.$$
For $\epsilon = 1/200$ and sufficiently large $a$, 
$$\overline{\p}_{\epsilon\text{-Base}}(B) \leq \exp(-a/100).$$
\end{lemma}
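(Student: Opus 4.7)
The plan is to introduce the potential $\Phi(\omega) := L(\omega) + N(\omega)$, which equivalently equals $a+1 - |\{i > L(\omega) : \omega(i) = 0\}|$. For any $\epsilon$-Base state, $\Phi(\omega^0) = a+1$; on the bad set $\key$ one has $\Phi \leq 2\epsilon a = a/100$. Thus $\key$ occurring before $\tau_{\text{Exit}}$ forces $\Phi$ to drop by at least $(1-1/100)a$, and the task reduces to bounding the probability of such a drop.

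The first step is a one-step analysis of $\Delta \Phi_t := \Phi(\omega^{t+1}) - \Phi(\omega^t)$, by case analysis on the type and direction of $Q^t$. Computing the effect of the three-phase update (refresh, find-leftmost-$1$, inspect), one finds that in both rightward excursions and leftward excursions that fail to discover a $1$ to the left of $L^t$, $\Delta \Phi \in \{-1, 0\}$; in leftward excursions that do discover a new leftmost $1$ at some $\alpha + G < L^t$, $\Delta \Phi \in \{-2, -1\}$. In each case the contributions from the refresh step (which may unreveal revealed zeros as $?$'s) and from the find-leftmost-$1$ step (which reveals zeros while advancing $L$) exactly cancel in $\Phi$, leaving only the inspect contribution and a small constant. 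The geometric distribution of $G$ is controlled by Lemma~\ref{cost_ast}, and Lemma~\ref{geo} bounds larger per-step deviations.

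Using these estimates, I would build an exponential super-martingale: for a small $c > 0$, establish $\E[e^{-c \Delta \Phi_t} \mid \reveal^t] \leq \kappa$ with $\kappa - 1 = O(c)$. Iterating yields $\E[e^{c(\Phi(\omega^0) - \Phi(\omega^t))}] \leq \kappa^t$, and Markov's inequality gives $\p(\Phi(\omega^0) - \Phi(\omega^t) \geq (1-2\epsilon)a) \leq e^{-c(1-2\epsilon)a} \kappa^t$. For a suitable choice of $c$ and $t$ at most of order $a$, this is at most $e^{-a/100}$.

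The main obstacle is handling the unbounded time horizon, since $\tau_{\text{Exit}}$ is not a priori controlled. I would address this by combining the above with a drift argument on $L^t$: Lemma~\ref{cost_ast} implies that each hidden position resolves to a $1$ with probability at least $1/6$, so in the Base-like regime the find-leftmost-$1$ step pushes $L$ to the right by a constant in expectation, and $L$ escapes $[0, \epsilon a]$ in $O(a)$ steps with high probability. Re-entries into the small-$L$ regime are handled by a renewal-style argument on the filtration $\{\reveal^t\}$, each contributing a bounded multiplicative factor. This renewal step is the most delicate part of the argument, since successive re-entries are correlated through $\reveal^t$ and the independence is only partial (as quantified by Lemma~\ref{cost_ast}).
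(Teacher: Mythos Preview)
Your potential $\Phi = L + N = (a+1) - Z$ is the natural one, and the computation $\Phi(\omega^0) = a+1$ at an $\epsilon$-Base state is correct. But the argument has an essential gap that your final paragraph does not close.

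The one-step analysis is not quite right, and more importantly, not strong enough. It is \emph{not} true that $\Delta\Phi \leq 0$ in all cases: when the excursion range covers some pre-existing right zeros, or when the find-leftmost-$1$ step advances $L$ past pre-existing right zeros, those zeros are erased and $\Delta\Phi > 0$. That is harmless for your purposes, but the flip side is decisive: from any state with few right zeros (in particular from Base-like states), $\Delta\Phi$ is supported on $\{-2,-1,0\}$ with $\p(\Delta\Phi \leq -1)$ bounded away from $0$. Hence $\E[e^{-c\Delta\Phi}\mid \reveal^t] \geq 1 + p_0 c$ for some fixed $p_0 > 0$, uniformly in $c$ and in such states. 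So $e^{-c\Phi}$ is a strict \emph{sub}martingale on a set of states that the process visits repeatedly, and no exponential supermartingale in $\Phi$ alone exists. Your bound $e^{-c(1-2\epsilon)a}\kappa^t$ is only useful for $t = O(a)$, and there is no reason for the hitting time of $\{\Phi \leq 2\epsilon a\}$ or $\tau_{\text{Exit}}$ to be that small; in fact the whole point of the surrounding analysis is that $\tau_{\text{Exit}}$ is typically exponentially large. The proposed renewal argument does not help: each excursion of $L$ into and out of $[0,\epsilon a]$ can create $O(1)$ new right zeros, and there can be far more than $O(a)$ such excursions before $\tau_{\text{Exit}}$, so the accumulated drop in $\Phi$ is not controlled.

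The paper's proof takes a completely different route. It does not track $\Phi$ step by step; instead it observes (Lemma~\ref{pao}) that the last-visit times $\last^t(i)$ of the right zeros are monotone in $i$, so the zeros must have been produced in a right-to-left pass. This lets one encode the entire history by a ``good'' string $x \in \{0,*,?/1\}^{[p,a]}$ and a counter vector $z$ (Lemma~\ref{union}), and then bound $\p(A_{x,z})$ by a product over the zeros of $x$, each factor being roughly $2/3$, $2/9$, or $(1-1/2r)$ depending on the local pattern (Lemma~\ref{healthy}). Summing this product over all $(x,z)$ compatible with $\key$ gives the exponential bound. The crucial point your potential misses is that \emph{each} right zero individually carries an exponential cost factor, not just the aggregate count $Z$.
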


The rest of Section \ref{sec:ugly} is devoted to the proof of Lemma \ref{ugly computation}. In order to produce so many zeros in $\omega^t$, the auxiliary carpet process needs to follow certain scheme -- it has to work from right to left and generate the zeros without refreshing the pre-existing zeros to its right. The proof then goes by identifying such schemes and bounding each scheme's probability, via estimates mimicking those for birth-death chains.

\subsection{History of zeros in $\omega^t$}

We start by studying how zeros are generated in the auxiliary carpet process. Write
$$\visited(t,i) = 1\{i \in \range(Q^t)\}.$$
Recall that the process $\omega^t$ starts from some negative time $t_0$. For any time $t \geq 0$, define 
 $$\rightzeros^t := \{i>L(\omega^t):\ \omega^t(i)=0\}.$$
For any time $t\geq0$ and $i \in [L(\omega^t),a]$, define 
$$\last^t(i) := \sup \{t' \in [0,t) : \visited(t',i)=1 \} \vee -1,$$
which is the last \textit{nonnegative} time $t'$ before $t$ when $i$ gets visited by a random walk path. 

\begin{lemma} \label{fromage mini}
For any $s$ and $i$ with $L(\omega^{s+1}) < i \le L(\omega^s)$, we have $\visited(s,i)=1$
\end{lemma}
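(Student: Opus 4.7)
The plan is to reduce the claim to two facts: (1) the walk $Q^s$ actually visits the new leftmost-one position $L(\omega^{s+1})$, and (2) on each side of $L^s := L(\omega^s)$ the relevant portion of $Q^s$ is nearest-neighbor, so its range on that side is a contiguous interval that captures all intermediate integers.

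If $L(\omega^{s+1}) \ge L(\omega^s)$ the range of $i$ in the statement is empty, so I may assume $L(\omega^{s+1}) < L^s$. In particular $\omega^{s+1}$ is defined, so $Q^s$ is an excursion, a long excursion, or a double-sided path, but not a failed re-arrival. The case $i=L^s$ is immediate since $Q^s$ starts at $L^s$, so only $L(\omega^{s+1}) < i < L^s$ needs argument.

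For fact (1), I would apply Lemma \ref{graduation}(2) twice to identify $L(\omega^{s+1})=L(\tilde\omega^{s+1})$ and $L(\omega^s)=L(\tilde\omega^s)$. Since $L(\omega^{s+1}) < L(\tilde\omega^s)$, the definition of $L(\tilde\omega^s)$ gives $\tilde\omega^s(L(\omega^{s+1}))=0$, while by definition $\tilde\omega^{s+1}(L(\omega^{s+1}))=1$. Combined with the update rule \eqref{rental}, this forces $\Xs(L(\omega^{s+1}))=1$, hence $\local_{Q^s}(L(\omega^{s+1}))\ge 1$ and $\visited(s,L(\omega^{s+1}))=1$.

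For fact (2), in each of the three remaining path types, the portion of $Q^s$ lying weakly to the left of $L^s$ is a nearest-neighbor walk starting from $L^s$: for an ordinary excursion and a long excursion this is obvious; for a double-sided path only one of its two halves crosses to the left of $L^s$, and each half is individually nearest-neighbor by the definition of the Type 1 and Type 2 walks in Section \ref{sec:carpet_excur}. Since this left-going portion reaches $L(\omega^{s+1})$, it must pass through every integer in $[L(\omega^{s+1}), L^s]$, yielding $\visited(s,i)=1$ throughout. The only mildly subtle point is the bookkeeping for the double-sided case, where one must note that nearest-neighborhood is a property of each half of $Q^s$ rather than of the concatenation (which has a ``jump'' at the transit boundary); this follows immediately from Section \ref{sec:carpet_excur}, so no real obstacle arises.
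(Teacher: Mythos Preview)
Your proof is correct and takes a somewhat different route from the paper's. The paper argues by a direct case split on the type of $Q^s$: it rules out right excursions (since then $L(\omega^{s+1}) > L(\omega^s)$), notes that a double-sided path visits all of $[0,a]$, and for a left excursion observes $L(\omega^{s+1}) \geq \min(\range(Q^s))$ so that $i$ lies in the contiguous range $[\min(\range(Q^s)), L^s]$. Your approach is more unified: you first use the parity update rule \eqref{rental} together with Lemma~\ref{graduation}\eqref{omega:L} to force $L(\omega^{s+1}) \in \range(Q^s)$ directly (which also disposes of the right-excursion case by contradiction), and then appeal to nearest-neighbor contiguity on the left of $L^s$. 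Both reach the same conclusion with comparable effort; your fact~(1) is a clean observation that the paper leaves implicit.

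One small imprecision: for a long excursion to the left, the full path $Q^s$ jumps from the neighboring block boundary $-K+a$ back to $0$ when the new hot particle is designated, so it is not literally a single nearest-neighbor walk as you assert. This is harmless, since the type~1 half alone is nearest-neighbor from $L^s$ down to $-K+a$ and its range already contains $[0,L^s]\supset[L(\omega^{s+1}),L^s]$; you handled the analogous jump for the double-sided case explicitly, and the long-excursion case deserves the same remark rather than ``obvious.''
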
 
\begin{proof}
$Q^s$ cannot be an excursion to the right, since necessarily $L(\omega^{s+1}) > L(\omega^s)$ in that case. If $Q^s$ is a double-side path, then $\visited(s,i) = 1$ for all $i$. So assume $Q^s$ is an excursion to the left. Then
$L(\omega^{s+1}) \geq \min(\range(Q^s))$ and thus $\min(\range(Q^s)) < i \le L(\omega^{s})$. This implies $\visited(s,i)=1$.
\end{proof} 
 
\begin{lemma} \label{quejo}
For any $t\geq0$, $i \in \rightzeros^t$ and $ t' \in [\last^t(i)+1, t]$, $$ L(\omega^{t'})<i.$$
\end{lemma}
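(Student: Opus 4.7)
The plan is to proceed by downward induction on $t'$, starting from the base case $t' = t$ and working backwards to $t' = \last^t(i)+1$.

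For the base case $t' = t$, the conclusion $L(\omega^t) < i$ is immediate from the definition of $\rightzeros^t$, since $i \in \rightzeros^t$ requires $i > L(\omega^t)$.

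For the inductive step, suppose $L(\omega^{t'+1}) < i$ for some $t' \in [\last^t(i)+1, t-1]$, and assume toward a contradiction that $L(\omega^{t'}) \geq i$. Then we have $L(\omega^{t'+1}) < i \leq L(\omega^{t'})$, so Lemma \ref{fromage mini} applied with $s = t'$ yields $\visited(t', i) = 1$. However, the assumption $t' \geq \last^t(i) + 1$ means $t' > \last^t(i)$, which directly contradicts the definition of $\last^t(i)$ as the supremum of nonnegative times $t' < t$ at which $\visited(t', i) = 1$. (In the edge case $\last^t(i) = -1$, no visit has occurred in $[0, t)$, and the same contradiction still applies.)

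The entire argument is essentially a bookkeeping consequence of Lemma \ref{fromage mini}, which already encodes the key geometric fact: the hole cannot move leftward past a site without that site being visited by the random walk path. Since there is no computational or combinatorial obstacle here, I do not anticipate any difficulty beyond carefully handling the edge case $\last^t(i) = -1$ and making sure the downward induction terminates correctly at $t' = \last^t(i) + 1$.
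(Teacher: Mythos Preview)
Your proof is correct and essentially the same as the paper's. The paper argues by direct contradiction---if $L(\omega^{t'})\geq i$ for some $t'$ in the range, then since $L(\omega^t)<i$ there must be a crossing time $s\in[t',t)$ with $L(\omega^{s+1})<i\le L(\omega^s)$, and Lemma~\ref{fromage mini} forces $\visited(s,i)=1$, contradicting the definition of $\last^t(i)$---whereas your downward induction locates the same crossing time as the largest $t'$ at which the inequality fails; the key input (Lemma~\ref{fromage mini}) and the contradiction are identical.
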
 
 
 \begin{proof}
Suppose that $L(\omega^{t'})\geq i$. Since $i \in \rightzeros^t$, we have $L(\omega^t) < i \le L(\omega^{t'})$. Thus there exists some $s$ such that $t' \leq s <t$ with  $L(\omega^{s+1}) < i \le L(\omega^{s})$. But by Lemma \ref{fromage mini} we must have a time $s \in (\last^t(i),t)$ with $\visited(s,i)=1$, which contradicts the definition of $\last^t(i)$. Therefore $L(\omega^{t'})< i$.
 \end{proof}
 
\begin{lemma}\label{boutique}
Suppose $L(\omega^s) \notin \{i,i+1\} \subseteq \range(Q^s)$, and $L(\omega^{t'}) < i$ for any $t' \in [s+1,t]$, then $$\omega^t(i) = \omega^t(i+1) =\, ?.$$
\end{lemma}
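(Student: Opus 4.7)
The plan is to reduce to Lemma \ref{graduation}\eqref{omega:?}, which identifies $\omega^t(i) = \,?$ with the existence of some $s'' < t$ satisfying $\hidden(s'', i, t) = 1$. It therefore suffices to show that $\hidden(s, i, t) = \hidden(s, i+1, t) = 1$ for the particular $s$ in the hypothesis. I will first establish this at time $s+1$ directly from the inductive definition, and then propagate it up to time $t$ by induction using the running hypothesis $L(\omega^{t'}) < i$.

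For the base case at time $s+1$, I will track $\hidden(s, i, \cdot)$ and $\hidden(s, i+1, \cdot)$ through the three sub-steps. In the refresh sub-step, $\{i, i+1\} \subseteq \range(Q^s) \setminus \{L^s\}$ places both positions into the generic class that receives ``?'' and $\hidden$-status $1$; the only obstructions are the corner exceptions at $L^s \pm 1$. I will exclude these case-by-case: for the $\max$-corner at position $L^s+1$, the dangerous coincidences are $L^s+1 = i$ and $L^s+1 = i+1$, but the former forces $\max(\range(Q^s)) = i$ which contradicts $i+1 \in \range(Q^s)$, and the latter is excluded by $L^s \neq i$; the $\min$-corner is ruled out symmetrically. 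The leftmost-one sub-step only zeros $\hidden(s, j, s+1)$ for $j \leq L^{s+1}$, which by the second hypothesis is at most $i-1$, so neither $i$ nor $i+1$ is affected. The inspect sub-step could only touch $\hidden(s, L^{s+1}+1, s+1)$; the sole worrying subcase is $L^{s+1}+1 = i$, and there I will verify that both alternatives of Lemma \ref{unholy} for $Q^s$ at $i$ fail, since $i \neq L^s - 1$ (because $L^s \neq i+1$) and $i \neq \max(\range(Q^s))$ (because $i+1 \in \range(Q^s)$). This yields $\hidden(s, i, s+1) = \hidden(s, i+1, s+1) = 1$.

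The inductive step from $r$ to $r+1 \in [s+2, t]$ is then almost automatic: the refresh sub-step at time $r+1$ modifies only $\hidden(r, \cdot, r+1)$ and leaves $\hidden(s, \cdot, r+1)$ inherited from time $r$; the leftmost-one and inspect sub-steps are handled exactly as in the base case, because the standing hypothesis $L(\omega^{r+1}) < i$ still holds and the Lemma \ref{unholy} conditions for $Q^s$ at $i$ depend only on $Q^s$, not on $r$. This gives $\hidden(s, i, t) = \hidden(s, i+1, t) = 1$, and Lemma \ref{graduation}\eqref{omega:?} closes the proof.

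The main obstacle I anticipate is not any estimate but the meticulous bookkeeping around the three corner-type events — the two refresh-step exceptions at $L^s \pm 1$ and the inspect-step exception at $L^{r+1}+1$. The hypothesis that \emph{both} $i$ and $i+1$ lie in $\range(Q^s)$, rather than just $i$, is what lets me simultaneously exclude all of these coincidences: it forces $\max(\range(Q^s)) \geq i+1$ and $\min(\range(Q^s)) \leq i$, which combined with $L^s \notin \{i, i+1\}$ eliminate every dangerous configuration in one stroke.
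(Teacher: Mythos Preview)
Your proof is correct and follows exactly the same approach as the paper's own proof: show $\hidden(s,i,t)=\hidden(s,i+1,t)=1$ by checking that none of the three sub-steps ever sets these to zero, then invoke Lemma~\ref{graduation}\eqref{omega:?}. The paper's proof is a two-sentence sketch of precisely this argument, and your version simply fills in the corner-case bookkeeping that the paper leaves implicit.
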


\begin{proof}
Recall the definition of the procedure in Section \ref{sec:aux}. By the first assumption, $\hidden(s,i',s+1)$ has not been set to zero at the end of the `refresh' step for $i'\in\{i,i+1\}$. By the second assumption, $\hidden(s,i',t')$ will remain to be one for $i'\in\{i,i+1\}$ and $t' \in [s+1,t]$. So $\hidden(s,i,t) = \hidden(s,i+1,t) = 1$ and the lemma follows by Lemma \ref{graduation} \eqref{omega:?}.
\end{proof}

\begin{lemma} \label{pao}
Let $i_{\max} := \max\{i > L(\omega^t):\omega^t(i)=0\}.$
We have
\begin{enumerate}
\item the function $\last^t(i)$ is decreasing on the set $[L(\omega^t),a]$; \label{monotone}
\item for any $i,j \in \rightzeros^t$ with $j < i$ and $\last^t(j)=\last^t(i)\neq\last^t(i_{\max})$, we have $i=j+1$ and at time $\last^t(i)$, there was an 
excursion to its left $Q^s=Q^{\last^t(i)}$ starting from $i$ such that $\visited(s,i-2)=1$ and $\parity_{Q^s}(i-1)=0$; \label{adjacent}
\item there do not exist three distinct values $i,j,k \in \rightzeros^t$ with 
   $\last^t(i) = \last^t(j) = \last^t(k) \neq \last^t(i_{\max})$. \label{nothree}
\item if $\omega^0$ is an $\epsilon$-Base state, then there do not exist four distinct values in $\rightzeros^t$ with the same function value $\last^t(\cdot)$.
\label{nofour}
   \end{enumerate}
\end{lemma}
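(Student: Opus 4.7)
The plan is to handle the four parts in turn, with (1), (3), (4) following largely from (2). For part (1), I would argue by intermediate value. Given $i < j$ both in $[L(\omega^t), a]$ and $s = \last^t(j)$: if $L^s \leq i$ then $Q^s$ must cross $i$ on its way to $j$, so $\visited(s, i) = 1$; otherwise $L^s > i \geq L(\omega^t)$, and the leftmost $1$ decreases strictly from $> i$ to $\leq i$ between times $s$ and $t$, so Lemma \ref{fromage mini} produces an intermediate walk that visits $i$. Either way, $\last^t(i) \geq s$.

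For part (2), the main tool is Lemma \ref{boutique}. Set $s = \last^t(i) = \last^t(j)$; connectivity of $\range(Q^s)$ forces $\{j, j+1\} \subseteq \range(Q^s)$, and Lemma \ref{quejo} gives $L(\omega^{t'}) < j$ for $t' \in [s+1, t]$. If $L^s \notin \{j, j+1\}$, Lemma \ref{boutique} forces $\omega^t(j) = \,?$, contradicting $j \in \rightzeros^t$; so $L^s \in \{j, j+1\}$. I would rule out $L^s = j$ by noting that any walk from $L^s = j$ reaching $i > j$ leaves $L^{s+1}$ strictly to the right of $j$ after refresh and leftmost-one, violating Lemma \ref{quejo}. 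So $L^s = j+1$. Applying Lemma \ref{boutique} to the pair $(i-1, i)$, together with the fact that later inspects can only act at $L^{t'}+1 \leq j < i-1$ (by Lemma \ref{quejo}), rules out $i > L^s$, forcing $i = j+1 = L^s$. A separate check rules out $Q^s$ being a double-sided path (in that case $i_{\max}$ would also be visited at time $s$ and share $\last^t$ with $i,j$, violating the hypothesis). Hence $Q^s$ is a left excursion from $i$. The only way to get $\omega^t(j) = 0$ is via the inspect step at $s+1$ at position $L^{s+1}+1 = j$, which forces $\min(\range(Q^s)) = L^{s+1} = i - 2$ (so $\visited(s, i-2) = 1$) and $\parity_{Q^s}(i-1) = 0$.

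Part (3) is then immediate: for three distinct $i > j > k$ with common $\last^t(\cdot) = s \neq \last^t(i_{\max})$, part (2) applied to each pair yields $i = j+1$, $j = k+1$, and $i = k+1$, contradicting $i = k+2$. For part (4), my plan is to classify how a single walk $Q^s$ can contribute zeros to $\rightzeros^t$ with $\last^t(\cdot) = s$. A case check of refresh/leftmost-one/inspect shows only $L^s$ and $L^{s+1}+1$ are viable immediate zeros (boundary flips at $L^s \pm 1$ either produce $1$ instead of $0$ or force $L^{s+1}$ past the new zero, contradicting Lemma \ref{quejo}). Further zeros must come from inspects at later times $t'+1$ with $L^{t'+1}$ equal to the new zero minus one, but Lemma \ref{quejo} applied to the \emph{smallest} $\last^t = s$ zero bounds $L^{t'+1}$ strictly below every such zero, permitting at most one extra ``late'' zero -- at most three in total. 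The $\epsilon$-Base hypothesis enters only to rule out inherited zeros with $\last^t = -1$: under $\epsilon$-Base, $\omega^0$ has no zeros right of $L^0$, so every element of $\rightzeros^t$ automatically has $\last^t \geq 0$.

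The hard part will be the case analysis in part (2), particularly ruling out double-sided paths cleanly and tracking the delicate interplay of refresh, leftmost-one, and inspect when $\min(\range(Q^s))$ equals $L^s - 1$ versus $L^s - 2$; the boundary-flip bookkeeping in part (4) is similarly the main source of technical bloat, though it is conceptually straightforward once (2) is in hand.
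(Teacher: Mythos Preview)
Your overall strategy is close to the paper's, and parts (1) and (3) are fine. But there are genuine gaps in parts (2) and (4).

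\textbf{Part (2).} Your argument ruling out $L^s = j$ (``any walk from $L^s = j$ reaching $i > j$ leaves $L^{s+1}$ strictly to the right of $j$'') is only valid for right excursions, not for double-sided paths: a double-sided path from $L^s = j$ refreshes positions on both sides of $j$ with $?$'s, so $L^{s+1}$ can land anywhere. You do rule out double-sided paths later, but by then you have already used the false claim $L^s = j+1$ to argue $i = j+1$. The fix is to reorder: rule out double-sided first (your $i_{\max}$ argument is correct), then rule out right excursions (they don't visit $j < L^s$), and only then conclude $Q^s$ is a left excursion from $L^s = j+1$; now $i \in \range(Q^s) \subseteq [\min, j+1]$ together with $i > j$ forces $i = j+1$ directly, without any appeal to Lemma~\ref{boutique} on $(i-1,i)$. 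Also, your assertion $\min(\range(Q^s)) = L^{s+1}$ is false in general: $L^{s+1}$ can lie strictly to the right of $\min(\range(Q^s))$. The paper instead argues directly that $\tilde\omega^t(j) = \tilde\omega^s(j) + \parity_{Q^s}(j) = 0 + \parity_{Q^s}(j)$, so $\omega^t(j) = 0$ forces $\parity_{Q^s}(j) = 0$, and parity zero at $j = L^s - 1$ forces the excursion to reach $j-1 = i-2$.

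\textbf{Part (4).} Your claim that ``under $\epsilon$-Base, every element of $\rightzeros^t$ automatically has $\last^t \geq 0$'' is false. An index $i > L^0$ with $\omega^0(i) = \,?$ can end up with $\omega^t(i) = 0$ without ever being visited: if at some time $t'$ one has $L^{t'+1} = i-1$, the inspect step can reveal $\tilde\omega^{t'+1}(i) = \tilde\omega^0(i)$, which may well be $0$. So $\rightzeros^t(-1)$ need not be empty. The paper instead shows $|\rightzeros^t(-1)| \leq 1$: if $i < j$ were both in $\rightzeros^t(-1)$, then Lemma~\ref{quejo} gives $L^{t'} < i$ for all $t' \in [0,t]$, so $j \geq L^{t'} + 2$ always and $\omega^{t'}(j)$ can never leave the value $?$. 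For $s \geq 0$ the paper's argument is also much simpler than your case analysis: among any four elements of $\rightzeros^t(s)$, at most two can have $L^s \in \{i, i+1\}$ and at most one can fail $i+1 \in \range(Q^s)$, so some $i$ satisfies the hypotheses of Lemma~\ref{boutique}, yielding $\omega^t(i) = \,?$, a contradiction.
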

 
\begin{proof}
Since the random walk always starts from the leftmost one, a straightforward induction on the time $t$ proves item \eqref{monotone}. Item \eqref{nothree} follows directly from item \eqref{adjacent}, so it remains to show items \eqref{adjacent} and \eqref{nofour}.

For item \eqref{adjacent}, write $s = \last^t(j) = \last^t(i)$. By item \eqref{monotone}, we have $s > \last^t(i_{\max}) \geq -1$, so $s \geq 0$ and $\visited(s,j) = \visited(s,i)=1$. Thus $\{j, j+1\} \subseteq \range(Q^s)$. We claim that $L(\omega^s) \in \{j, j+1\}$. Suppose not, then Lemmata \ref{quejo} and \ref{boutique} combined imply that $\omega^t(j)=\,?$, which is a contradiction. This proves the claim.

We consider all the possible cases of $Q^s$ satisfying $L(\omega^s) \in \{j, j+1\}$. First, $Q^s$ cannot be a double-sided path: otherwise, we would have $\last^t(i) = \last^t(i_{\max})$. Secondly, $Q^s$ cannot be an excursion to the right: otherwise, from $L(\omega^s) \in \{j, j+1\}$ we must have $L(\omega^{s+1}) > j$, which contradicts Lemma \ref{quejo}. Lastly, if $Q^s$ is an excursion to the left, then we must have $L(\omega^s) = j+1$ to guarantee $\visited(s, j+1) = 1$. This implies $i=j+1$. Since no random walk after time $s$ visits $j$, it follows that $\parity_{Q^{s}}(j) = \tilde\omega^s(j) = \omega^s(j)=0$, which in turn implies that $\visited(s,i-2)=1$. This proves item \eqref{adjacent}.

For item \eqref{nofour}, we first show that for any $s \geq 0$, $|\rightzeros^t(s)| \leq 3$, where
$$\rightzeros^t(s) := \{i \in \rightzeros^t: \last^t(i) = s\}.$$
Suppose not, then out of any four such elements we can find $i \in \rightzeros^t(s)$ such that $L(\omega^s) \notin \{i, i+1\} \subseteq \range(Q^s)$, which leads to a similar contradiction to that in the proof of item \eqref{adjacent}. This proves the bound for $s\geq0$.

It remains to check the case where $s=-1$. We show that $|\rightzeros^t(-1)| \leq 1$ again by contradiction. Suppose not and there exist $i, j \in \rightzeros^t(-1)$ with $i<j$. Since $\last^t(j)=-1$, by item \eqref{monotone} we have $j > L(\omega^0)$ and thus $\omega^0(j) =\,?$ due to the definition of an $\epsilon$-Base state. Moreover, by Lemma \ref{quejo} we get $L(\omega^{t'}) < i$ for any $t' \in [0,t]$, so it follows from the definition of $\omega^t$ that $\omega^{t'}(j) =\,?$ for any $t' \in [0,t]$. This contradicts with the fact that $\omega^t(j)=0$, which proves item \eqref{nofour}.
\end{proof}







We introduce some more notation.
For $p<a$ write $[p,a] =[p,a] \cap \Z.$
We call a sequence $x \in \{0,*,\qm\}^{[p,a]}$ {\bf good}
if for all $j$ such that $x(j)=*$ we have  $x(j+1)=0$. The good sequences capture different ways in which zeros may be generated in $\omega^t$, with adjacent $*$ and $0$ representing a pair of zeros generated as described in Lemma \ref{pao} \eqref{adjacent}.

Given a good $x$ and $p < a$, we partition the zeros in $x$ as follows:
$$\setzero:=\{j \in [p,a]:x(j)=0\}=\setone \cup \settwo \cup \setthree \cup \{\tilde M(x)\},$$
where
\begin{eqnarray*}
\tilde M(x) &:=& \max \{j: \ x(j)=0\}\\
\tilde N(x) &:=& \max \{j < \tilde M(x): \ x(j)=0\}
\end{eqnarray*}
and
\begin{eqnarray*}
\setone &:=& \{ j \in [p,a]: x(j)=0, x(j+1) \neq\, \qm \text{ and }x(j-1)\neq*\} \ \setminus \{\tilde N(x)\}\\
\settwo &:=& \{ j \in [p,a]: x(j)=0, x(j+1) \neq\, \qm \text{ and }x(j-1)=*\} \ \setminus \{\tilde N(x)\}\\
\setthree &:=& \{ j \in [p,a]: x(j) =0, x(j+1) =\,\qm\}\cup\{\tilde N(x)\}\setminus\{\tilde M(x)\}.
\end{eqnarray*}


For $j \in \setzero \setminus \{ \tilde M(x) \}$, let $k(j)$ be such that
$$j+k(j)=\inf\{j'>j: j' \in \setzero\}$$
and let $r(j)$ be such that
$$j+r(j)=\inf\{j'>j: x(j')=0 \text{ or }*\},$$
except when $j=\tilde N(x)$, we have $r(\tilde N(x)) = \tilde M(x) + 1 - \tilde N(x)$ instead.
Note that in a good sequence, for $j \neq \tilde N(x)$ we always have $r(j)=k(j) \text{ or } k(j)-1$.

Finally, for $r \geq 2$ divide $\setthree$ into pieces 
$$\setthreek:=\setthree\cap\{j:\ r(j)=r\}.$$

\subsection{Counters, stopping times and events}\label{count von count}


In this section, we outline the proof of Lemma \ref{ugly computation}.
Suppose we're given a good sequence $x$. To bound the probability of the right-to-left dynamics generating $x$, we will inductively define a counter $y^s(i)$ at every vertex $i \in \setzero$ starting from $\tilde M(x)$.
The goal of the counters is twofold. On one hand, given a sequence $z \in \N^{\setzero}$ the counters can be used to identify a sequence of stopping times $\tilde T(i)$ for every $i \in \setzero$. We will tailor our definitions so that $\tilde T(i) = \last^t(i) + 1$ with the right choice of $z$ (in most cases), see Lemma \ref{stopping time}. On the other hand, the counters also give us bounds on the probability of the key events $A_{x,z}$, see Lemma \ref{healthy}.

We start with the definition of $y^s(i)$ and $\tilde T(i)$ at $i = \tilde M(x)$.

\begin{itemize}
\item For $i =\tilde M(x)$, the counter $y^0(i)$ starts at zero. For any $s \geq 0$, the counter $y^{s+1}(i)=y^s(i)+1$ increases by one if at time $s$ the random walk starts from $L(\omega^s)\geq \tilde M(x)$ and either $s=0$ or the previous random walk path $Q^{s-1}$ was from $L(\omega^{s-1}) < \tilde M(x)$; otherwise, the counter stays put and we have $y^{s+1}(i)=y^s(i)$. Given $z(i)$, define the stopping time $\tilde T(\tilde M(x))$ to be the smallest $s$ such that $y^s(i)=z(i)$ and the next path starts at $L(\omega^s) < \tilde M(x)$ if such $s$ exists; in this case, we say $\tilde T(\tilde M(x))$ is well-defined. 
\end{itemize}

In order to define other counters we will work inductively. Suppose that the stopping time $\tilde T(i+k(i))$ is well-defined, we shall define $y^s(i)$ for all $s \geq \tilde T(i+k(i))$.
\begin{itemize}
\item For $i \in \setone \cup \settwo$, then initially at $s=\tilde T(i+k(i))$, we set the counter $y^s(i)=\tilde\omega^s(i)-1$. For any $s \geq \tilde T(i+k(i))$, let $$y^{s+1}(i)=y^s(i)+\local_{Q^s}(i).$$ In words, the counter increases by one every time a particle moves from $i$.
\item For $i \in \setthree$, then the counter $y^{\tilde T(i+k(i))}(i)$ starts at zero and for $s \geq \tilde T(i+k(i))$, we have
$$y^{s+1}(i)=\begin{cases}
y^s(i)+1,&  L(\omega^s)\in [i,i+r(i))\\
y^s(i)+\local_{Q^s}(i),&  L(\omega^s)< i.
\end{cases}
$$ In words, the counter increases by one every time a random walk path starts at a location belonging to $[i,i+r(i))$ or every time a particle moves from $i$ in a path that starts to the left of $i$.
\end{itemize}

Given $z(i)$, let $\tilde T(i)$ be the smallest $s$ such that $y^s(i)=z(i)$ if such $s$ exists; in this case, we say $\tilde T(i)$ is well-defined. This completes the definition of the counter $y^s(i)$ and stopping time $\tilde T(i)$ at every $i \in \setzero$.

Finally, we define the key events $A_{x,z}$ in the analysis of counters. For a good sequence $x$ and $z \in \N^{\setzero}$, define $A_{x,z}$ to be the event that

\begin{enumerate}
\item the stopping times $\tilde T(i)$ are well-defined for all $i \in \setzero$; \label{well-defined}
\item $\tilde T(\tilde M(x)) < \tau_{\text{Exit}}$; \label{no exit}
\item  for any $i \in \setzero \setminus \{\tilde M(x)\}$, none of  $Q^s$ visits $i+r(i)$ during $\tilde T(i+k(i)) \le s < \tilde T(i)$;\label{no visit}
\item for $i \in \settwo$, just before $\tilde T(i)$ the particle made an excursion to the left $Q^s = Q^{\tilde T(i)-1}$ starting from $i$ where $\visited(s,i-2)=1$ and $\parity_{Q^s}(i-1)=0$. \label{two steps}
\end{enumerate}



\begin{lemma} \label{union}
If the event $\key$ in Lemma \ref{ugly computation} occurs, then there exists 
\begin{itemize}
\item $p\leq\epsilon a$, 
\item a good
$x \in \{0,*,\qm\}^{[p,a]}$ and 
\item $z \in \N^{\setzero}$ 
\end{itemize}
such that 
\begin{itemize}
\item the event $A_{x,z}$ occurs and 
\item all the $z(i)$'s are odd for $i \in \setone \cup \settwo$.
\end{itemize}
\end{lemma}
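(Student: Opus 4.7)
The plan is to construct $p$, $x$, and $z$ explicitly from the auxiliary process history up to the earliest time $t^*$ witnessing the event $B$, and then verify each clause of $A_{x,z}$ using the structural facts in Lemmata \ref{quejo} and \ref{pao}.

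I would first fix $t^* := \min\{t < \tau_{\text{Exit}} : L(\omega^t) \leq \epsilon a \text{ and } N(\omega^t) \leq \epsilon a\}$ and set $p := L(\omega^{t^*}) \leq \epsilon a$. Define $x \in \{0,*,\qm\}^{[p,a]}$ as follows: a zero $i \in \rightzeros^{t^*}$ receives $*$ precisely when it is the left element of a pair in the sense of Lemma \ref{pao} (2), i.e. $i+1 \in \rightzeros^{t^*}$ with $\last^{t^*}(i) = \last^{t^*}(i+1) \neq \last^{t^*}(i_{\max})$; every other zero receives $0$; all remaining positions (where $\omega^{t^*}(i) \neq 0$) receive $\qm$. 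Then $\tilde M(x) = i_{\max}$, and $x$ is good because Lemma \ref{pao} (3) forbids triples at any $\last^{t^*}$-value other than $\last^{t^*}(i_{\max})$: if $x(j) = *$, another pair at $(j+1,j+2)$ is precluded, forcing $x(j+1) = 0$.

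Next, I would set $z(i) := y^{\last^{t^*}(i)+1}(i)$, defined inductively from $\tilde M(x)$ leftward so that each initialization time $\tilde T(i+k(i))$ is already available. This choice is crafted so that $\tilde T(i) = \last^{t^*}(i)+1$ for every $i \in \setzero$, which immediately yields clause (2) of $A_{x,z}$ since $\tilde T(\tilde M(x)) \leq t^* < \tau_{\text{Exit}}$. Clause (4) for $i \in \settwo$ is precisely the pair-creation statement in Lemma \ref{pao} (2). Clause (3) reduces to the identity $\last^{t^*}(i+r(i)) = \last^{t^*}(i+k(i))$, valid in both the $r(i) = k(i)$ and $r(i) = k(i)-1$ cases (in the latter the intervening $*$ position is paired with the zero at $i+k(i)$); the required non-visit then follows from the definition of $\last^{t^*}$. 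The corner case $i = \tilde N(x)$, where $i+r(i) = i_{\max}+1$, is handled by noting that any walk visiting $i_{\max}+1$ after $\last^{t^*}(i_{\max})$ must have $L^s < i_{\max}$ by Lemma \ref{quejo} and would therefore cross $i_{\max}$, contradicting the definition of $\last^{t^*}(i_{\max})$.

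The oddness of $z(i)$ for $i \in \setone \cup \settwo$ follows from the invariant $y^s(i) \equiv \tilde\omega^s(i) - 1 \pmod 2$ together with $\tilde\omega^{\tilde T(i)}(i) = \omega^{t^*}(i) = 0$, where the latter uses Lemma \ref{quejo} to propagate the zero from $\tilde T(i)$ to $t^*$ and Lemma \ref{graduation} (1) to identify $\tilde\omega$ with $\omega$ at $\{0,1\}$-values. Clause (1), the well-definedness and minimality of all $\tilde T(i)$, should follow from a monotonicity argument: each counter strictly increases at $s = \last^{t^*}(i)$---by at least one local-time increment for $\setone \cup \settwo$ and $\setthree$, and by registering the transition into $L \geq \tilde M(x)$ for $\tilde M(x)$---so $z(i)$ cannot be matched at an earlier time. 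I expect the main technical work to lie in cleanly assembling this case analysis across the four counter types, particularly in handling the extra constraint $L^s < \tilde M(x)$ in the $\tilde M$ stopping rule (which requires the leftmost-priority policy) and the hybrid update rule for $\setthree$.
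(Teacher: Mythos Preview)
Your overall strategy matches the paper's, but there is a genuine gap in your choice of $\tilde M(x)$. You set $\tilde M(x) = i_{\max}$ and label every zero in $\rightzeros^{t^*}$ as either $0$ or $*$. The paper instead sets $\tilde M := \min\{i \in \rightzeros^{t^*}: \last^{t^*}(i) = \last^{t^*}(i_{\max})\}$ and declares $x(i) = \qm$ for all $i > \tilde M$. The distinction matters because Lemma~\ref{pao}\,(4) only forbids \emph{four} zeros sharing a common $\last^{t^*}$-value, and Lemma~\ref{pao}\,(2)--(3) say nothing about pairs or triples at the particular value $\last^{t^*}(i_{\max})$. So there can be zeros $i < i_{\max}$ with $\last^{t^*}(i) = \last^{t^*}(i_{\max})$, and your construction places them in $\setzero\setminus\{\tilde M(x)\}$. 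For such an $i$ you lose the strict inequality $\last^{t^*}(i) > \last^{t^*}(i+k(i))$, which the paper needs (and proves) to guarantee that the counter $y^s(i)$ is already running and strictly increases at time $\last^{t^*}(i)$. Concretely, if $i \in \setone \cup \settwo$ and $\tilde T(i+k(i)) = \last^{t^*}(i)+1$, then your formula gives $z(i) = y^{\tilde T(i+k(i))}(i) = \tilde\omega^{\tilde T(i+k(i))}(i) - 1 = -1 \notin \N$, so the stopping time $\tilde T(i)$ is not even well-defined.

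A second, related issue: you assert $\tilde T(\tilde M(x)) = \last^{t^*}(i_{\max})+1$, but this need not hold. The counter $y^s(\tilde M(x))$ increments only when $L(\omega^s) \geq \tilde M(x)$ after a time with $L < \tilde M(x)$, and nothing forces $L(\omega^{\last^{t^*}(i_{\max})}) \geq i_{\max}$; the walk $Q^{\last^{t^*}(i_{\max})}$ can visit $i_{\max}$ while starting strictly to its left. The paper only proves the inequality $\last^{t^*}(\tilde M+1)+1 \leq \tilde T(\tilde M) \leq \last^{t^*}(\tilde M)+1$ (Lemma~\ref{stopping time}), and that is all that is needed for clauses (2) and (3). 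Your downstream arguments that invoke the exact equality at $\tilde M(x)$ would have to be reworked accordingly. Both issues are fixed simultaneously by adopting the paper's $\tilde M$ (absorbing the extra zeros at level $\last^{t^*}(i_{\max})$ into the $\qm$ region) and using $z(i) := y^{t^*}(i)$, which coincides with your $y^{\last^{t^*}(i)+1}(i)$ once the strict monotonicity is in place.
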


\begin{lemma} \label{healthy}
Fix any $p\leq a$.
Fix any good sequence $x \in \{0,*,\qm\}^{[p,  a]}$.
Fix any sequence $z \in \N^{\setzero}$. We have
\begin{eqnarray*}\label{spacey}
\overline{\p}_{\epsilon\text{-Base}}( A_{x,z}) \leq\left(1-p_{\frac{1}{2}}^{\,-\tilde M(x)+a+1}\right)^{z(\tilde M(x))}
\prod_{i \in        \setone}\left(\frac{1}{2}\right)^{z(i)}
\prod_{i \in  \settwo} \frac{1}{6}\left(\frac{1}{2}\right)^{z(i)-1}
\prod_{r>1}\prod_{i \in \setthreek}\left(1-\frac{1}{2r}\right)^{z(i)},
\end{eqnarray*}
where $p_{\frac{1}{2}} = \frac{1}{2} - \frac{1}{a^4}$.
\end{lemma}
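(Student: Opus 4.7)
The plan is to decompose $A_{x,z}$ according to the stopping times $\tilde T(i)$, processing positions $i \in \setzero$ in decreasing order starting at $\tilde M(x)$, and to apply the strong Markov property repeatedly to reduce the bound to a product of per-increment conditional estimates.

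First I would handle the $i = \tilde M(x)$ factor. Each increment of $y^s(\tilde M(x))$ corresponds to a fresh epoch during which $L(\omega^s) \geq \tilde M(x)$. I would argue that during any such epoch, with probability at least $p_{1/2}^{a+1-\tilde M(x)}$ the hot particle takes $a+1-\tilde M(x)$ consecutive right steps within a single conditioned walk of one of the types from Section \ref{sec:carpet_excur}, reaches $a+1$, and eventually drives the block to the Exit configuration; here $p_{1/2} = 1/2 - 1/a^4$ serves as a uniform lower bound on the one-step right-move probability of the relevant conditioned walks via a direct Doob $h$-transform computation on the transit interval of length $K = a^4$. The strong Markov property applied at the successive epoch-entry times then yields that the probability that all $z(\tilde M(x))$ epochs avoid producing such a straight path is at most $\bigl(1 - p_{1/2}^{a+1-\tilde M(x)}\bigr)^{z(\tilde M(x))}$.

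Next I would process the remaining $i \in \setzero$ in decreasing order, conditioning at each step on the $\sigma$-algebra generated by the trajectories up through $\tilde T(i+k(i))$, and give a per-increment bound for $y^s(i)$. For $i \in \setone$ one has $r(i) = 1$ and each increment records a single step of the hot particle from site $i$; condition (3) of $A_{x,z}$ forbids such a step from landing on $i+1$, which occurs with probability at most $1/2$ by the one-step Markov property. Accounting for the bootstrap initial value $y^{\tilde T(i+k(i))}(i) = \tilde\omega^{\tilde T(i+k(i))}(i) - 1$ and the parity requirement that $z(i)$ be odd produces the factor $(1/2)^{z(i)}$. For $i \in \settwo$ the same reasoning handles the first $z(i)-1$ increments and yields $(1/2)^{z(i)-1}$, and the final increment at $s = \tilde T(i)-1$, constrained by condition (4) to be a left excursion visiting $i-2$ with $\parity_{Q^s}(i-1) = 0$, contributes an additional factor $1/6$ obtained by combining the probability of the left-excursion geometry with the parity estimate from Lemma \ref{loss}. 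For $i \in \setthreek$ with $r(i) = r \geq 2$, each increment of $y^s(i)$ corresponds either to a walk starting from $L^s \in [i, i+r)$ or to a step from $i$ in a walk from $L^s < i$; in either case condition (3) forbids the walker from reaching $i+r$, and a standard gambler's-ruin estimate on an interval of length $r$ bounds the conditional probability of this by $1 - 1/(2r)$ per increment, yielding $(1 - 1/(2r))^{z(i)}$.

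Combining these per-factor bounds via the tower property produces the stated product. The main obstacle is the $\tilde M(x)$ factor: within a single epoch the hot particle may perform both unconditioned excursions and long-excursion (Type 3) walks through transit regions, and one must verify that $p_{1/2}$ uniformly bounds the right-step probability across all of these scenarios and remains valid after the extra conditioning induced by the filtration $\reveal^t$ and the events controlled at positions $i' > i$. A secondary subtlety is keeping the conditioning on $R \in \reveal^0$ absorbed into the supremum defining $\overline{\p}_{\epsilon\text{-Base}}$ throughout the iterative argument, which requires that each per-increment bound holds uniformly in that conditioning — a property guaranteed by the strong Markov property of the hot-particle walks.
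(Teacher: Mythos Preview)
Your handling of the $\setone$, $\settwo$, and $\setthree$ factors is essentially the paper's argument, and is fine. The problem is the $\tilde M(x)$ factor, where your proposed mechanism is wrong.

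You argue that within a single epoch the hot particle, with probability at least $p_{1/2}^{a+1-\tilde M(x)}$, makes $a+1-\tilde M(x)$ consecutive right \emph{steps} inside one walk, reaches site $a+1$, and thereby forces Exit. But reaching site $a+1$ inside an excursion has no bearing on Exit: the excursion simply continues into the transit region and returns to $L^t$ (or becomes a long excursion), after which the parity update determines the new $L^{t+1}$. Exit means $L(\omega^t) = a+1$, i.e.\ the parity configuration is all zeros; a single excursion that wanders past $a$ does not produce this. Correspondingly, your explanation of $p_{1/2}$ via a Doob $h$-transform on the transit interval is off target: inside the block the walk is unconditioned simple random walk, so individual right-step probabilities are exactly $1/2$, and the $h$-transform plays no role here.

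The paper's argument is at the level of \emph{paths}, not steps. An epoch consists of many successive paths $Q^s$ with $L(\omega^s) \geq \tilde M(x)$. If the first $a-\tilde M(x)+1$ of these paths are each an excursion to the right (first step right, not a double-sided path or failed re-arrival), then $L(\omega^s)$ strictly increases at each of these times and hence exceeds $a$, contradicting item~\eqref{no exit} of $A_{x,z}$. The number $p_{1/2} = \tfrac12 - a^{-4}$ is a lower bound on the probability that a \emph{single path} is an excursion to the right; the $a^{-4}$ correction accounts for the at most $2/a^4$ chance that a path reaches a neighboring block and is then a double-sided path or failed re-arrival rather than a long excursion. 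This gives the factor $(1 - p_{1/2}^{a-\tilde M(x)+1})^{z(\tilde M(x))}$ directly.

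One minor point: the parity constraint on $z(i)$ for $i\in\setone\cup\settwo$ is not part of this lemma and is not needed for the $(1/2)^{z(i)}$ bound; the paper just uses $y^{\tilde T(p')} - y^{\tilde T(p)} = z(p') - (\tilde\omega^s(i)-1) \geq z(p')$ since $\tilde\omega^s(i)\in\{0,1\}$.
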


In Section \ref{sec:ugly_proof}, we will first prove the above two lemmata, and then combine them to prove Lemma \ref{ugly computation} using a union bound.

\subsection{Analysis of zero generation in $\omega^t$}  \label{sec:ugly_proof}



We start by proving Lemma \ref{union}.
Assume the event $\key$ from Lemma \ref{ugly computation} occurs, that is, $L(\omega^t) \leq \epsilon a \text{ and } N(\omega^t) \leq \epsilon a \text{ for some } 0\leq t< \tau_{\text{Exit}}$.
We define the corresponding $p$, $x$ and $z$ as follows. Recall $i_{\max} = \max\{i>L(\omega^t): \omega^t(i)=0\}.$
\begin{itemize}
\item Let $p:=L(\omega^t) \leq \epsilon a$. 
\item Set $\tilde M := \min\{i \in \rightzeros^t: \last^t(i) = \last^t(i_{\max})\}.$
\item Define a sequence $x$ in $\{0,*,\qm\}^{[p,a]}$ by first setting $x(\tilde M):=0$ and $x(i) := \,\qm$ for $i > \tilde M$. For $i < \tilde M$, let
\[x(i):=\begin{cases}
\qm & \text{if } \omega^t(i)=\,\qm,\\
* & \text{if } \omega^t(i)=  \omega^t(i+1)=0 \ \text{and} \ \last^t(i) = \last^t(i+1), \\
0 & \text{otherwise}.
\end{cases}
\]
Note that $x$ is good due to Lemma \ref{pao} \eqref{nothree}.
\item For $i = \tilde M(x)$ let $z(\tilde M(x)) := y^t(\tilde M(x))$. Note that $\tilde T(\tilde M(x))$ is well-defined by our definition of $z(\tilde M(x))$ and the fact that $L(\omega^t) < \tilde M(x)$. To define $z(i)$ for $i \in \setzero \setminus \{\tilde M(x)\}$ we will work inductively. Suppose that $z(i+k(i))$ is given and $\tilde T(i+k(i))$ is well-defined. Then we let $z(i) := y^t(i)$. Again $\tilde T(i)$ is well-defined by our choice of $z(i)$. This completes the definition of $z$.
\end{itemize}

\begin{lemma}\label{stopping time}
Let $p$, $x$ and $z$ be defined as above. For $i = \tilde M(x)$, we have $\last^t(i+1) + 1 \le \tilde T(i) \le \last^t(i) + 1$. For any $i \in \setzero \setminus \{\tilde M(x)\}$, we have $\tilde T(i) = \last^t(i) + 1$.
\end{lemma}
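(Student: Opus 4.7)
The first move, and the key structural observation for $i = \tilde M(x)$, is to reinterpret the counter $y^s(\tilde M)$ as the number of ``residence intervals'' of $L(\omega^\cdot)$ at or above $\tilde M$ that have started by time $s$. Writing these intervals as $[a_k, b_k)$ for $k \geq 1$, where $a_k$ is the first time $L \geq \tilde M$ after a stretch below and $b_k$ the first subsequent time $L < \tilde M$, a direct check of the increment rule gives $y^s(\tilde M) = k$ on $s \in [a_k + 1, a_{k+1}]$. Combining this with $L(\omega^t) = p < \tilde M$ and $z(\tilde M) := y^t(\tilde M)$, one reads off $\tilde T(\tilde M) = b_{z(\tilde M)}$, namely the end of the $z(\tilde M)$-th residence interval.

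With this identification the upper bound $\tilde T(\tilde M) \leq \last^t(\tilde M) + 1$ is almost immediate: at time $b_{z(\tilde M)} - 1$ the leftmost one drops from $\geq \tilde M$ to strictly below at the next step, so Lemma \ref{fromage mini} gives $\visited(b_{z(\tilde M)} - 1, \tilde M) = 1$, hence $\last^t(\tilde M) \geq b_{z(\tilde M)} - 1$. For the lower bound $\tilde T(\tilde M) \geq \last^t(\tilde M + 1) + 1$ the plan is to argue by contradiction. Suppose $\tilde M + 1$ is visited at some $s^* \geq b_{z(\tilde M)}$. Since $y^t(\tilde M) = z(\tilde M)$ forces the counter to be constant on $[b_{z(\tilde M)}, t]$, no new residence interval starts there, so $L(\omega^{s'}) < \tilde M$ for every $s' \in [b_{z(\tilde M)}, t]$. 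In particular $Q^{s^*}$ starts below $\tilde M$ and reaches $\tilde M + 1$, so $\max(\range(Q^{s^*})) \geq \tilde M + 1 > L(\omega^{s^*}) + 1$ puts us in the generic refresh case (the $L+1$ special case is excluded), which sets $\omega_\ast^{s^* + 1}(\tilde M + 1) = \,?$. Propagating this through step 2 using $L(\omega^{s'}) \leq \tilde M - 1$ throughout $[s^* + 1, t]$ yields $\omega^t(\tilde M + 1) = \,?$; combined with the defining property $\tilde M = \min\{i \in \rightzeros^t: \last^t(i) = \last^t(i_{\max})\}$ and Lemma \ref{pao}(3)--(4), this produces the desired contradiction.

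For $i \in \setzero \setminus \{\tilde M(x)\}$ the argument is essentially algebraic. For $i \in \setone \cup \settwo$ the counter starts at $\tilde\omega^{\tilde T(i+k(i))}(i) - 1$ and increments by $\local_{Q^s}(i)$, so it is non-decreasing and strictly increases exactly at visits to $i$. Since $z(i) := y^t(i)$ and no visits to $i$ occur after $\last^t(i)$, the smallest $s$ with $y^s(i) = z(i)$ is $\last^t(i) + 1$. For $i \in \setthree$ the counter has two flavours of increment, namely $+1$ per random walk whose leftmost one lies in $[i, i + r(i))$ and $+\local_{Q^s}(i)$ per path starting strictly left of $i$; the inductive hypothesis on $\tilde T(i + k(i))$ together with the same ``last visit freezes the counter'' principle again yields $\tilde T(i) = \last^t(i) + 1$.

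The hard part is the lower bound for $i = \tilde M$: carefully tracing the evolution of $\omega^{s'}(\tilde M + 1)$ after a putative visit $s^* \geq b_{z(\tilde M)}$ is delicate because $\omega^t(\tilde M + 1)$ may naturally be $\,?$, so the required contradiction cannot come from a direct $0/\,?$ mismatch and must instead be extracted from the minimality characterization of $\tilde M$ via Lemma \ref{pao}(3)--(4) and the precise form of the good sequence $x$ at positions $\tilde M$ and $\tilde M + 1$. The remaining cases are routine variations on the monotone-counter idea.
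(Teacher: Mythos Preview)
Your residence-interval reinterpretation of $y^s(\tilde M)$ and the upper bound via Lemma~\ref{fromage mini} are correct and match the paper exactly.

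The lower bound for $i=\tilde M(x)$, however, has a genuine gap. You focus on position $\tilde M+1$: from a visit at time $s^*\ge \tilde T(\tilde M)$ you deduce $\omega^t(\tilde M+1)=\,?$, then concede this is not a contradiction and appeal vaguely to ``the minimality characterization of $\tilde M$ via Lemma~\ref{pao}(3)--(4)''. That appeal does not work: $\omega^t(\tilde M+1)=\,?$ is perfectly consistent with the definition of $\tilde M$, and Lemma~\ref{pao}(3)--(4) only limit multiplicities of zeros sharing a $\last^t$ value, which says nothing here. The fix is to look one site to the left. Since $L(\omega^{s^*})<\tilde M$ and $Q^{s^*}$ reaches $\tilde M+1$, the path also traverses $\tilde M$, so $\{\tilde M,\tilde M+1\}\subseteq \range(Q^{s^*})$ with $L(\omega^{s^*})\notin\{\tilde M,\tilde M+1\}$. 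Because $L(\omega^{t'})<\tilde M$ for all $t'\in[s^*+1,t]$, Lemma~\ref{boutique} gives $\omega^t(\tilde M)=\,?$. But $\tilde M\in\rightzeros^t$ by construction, i.e.\ $\omega^t(\tilde M)=0$: this is the direct $0/?$ mismatch you thought was unavailable. This is precisely the paper's argument.

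For the inductive step your sketch is essentially correct but omits two points the paper supplies. First, to even speak of $y^{\last^t(i)}(i)$ you need $\last^t(i)\ge\tilde T(i+k(i))$; this uses $\last^t(i)\neq\last^t(i+k(i))$ (else Lemma~\ref{pao}\eqref{adjacent} would force $x(i)=*$, contradicting $i\in\setzero$) together with monotonicity and the inductive hypothesis. Second, for $i\in\setthree$ the counter can increment via the branch $L(\omega^s)\in[i,i+r(i))$ without $Q^s$ visiting $i$, so ``no visits after $\last^t(i)$'' does not by itself freeze the counter; you also need $L(\omega^{t'})<i$ on $[\last^t(i)+1,t]$, which is Lemma~\ref{quejo}.
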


\begin{proof}
	We shall prove Lemma \ref{stopping time} by induction on $i$. We start with the base case where $i = \tilde M(x)$. If $z(i) = y^t(i) = 0$, then $\tilde T(i) = 0$ and the upper bound on $\tilde T(i)$ is trivial. If $y^t(i) > 0$, then by the definition of $\tilde T(\tilde M(x))$ we have $L(\omega^{\tilde T(i)}) < i \le L(\omega^{\tilde T(i)-1})$. Lemma \ref{fromage mini} implies $\visited(\tilde T(i)-1, i) = 1$ and thus $\tilde T(i)-1 \le \last^t(i)$. This proves the upper bound.
	
	To show $\tilde T(i) \geq \last^t(i+1) + 1$ for $i = \tilde M(x)$, we argue by contradiction. Suppose for some $s \geq \tilde T(\tilde M(x))$, the path $Q^{s}$ visits $\tilde M(x)+1$. Since $s \geq \tilde T(\tilde M(x))$, by definition $L(\omega^{t'}) < \tilde M(x)$ for any $t'\in[s,t]$, so the conditions of Lemma \ref{boutique} are satisfied.
	This implies $\omega^t (\tilde M(x)) = \,?$, which is a contradiction. This completes the proof of the base case.
	
	Now suppose $\tilde T(i+k(i)) \le \last^t(i+k(i)) + 1$ holds for some $i \in \setzero \setminus \{\tilde M(x)\}$, we will prove $\tilde T(i) = \last^t(i) + 1$. First, note that $\last^t(i) \neq \last^t(i+k(i))$; otherwise, Lemma \ref{pao} \eqref{adjacent} would imply $k(i) = 1$ and $x(i) = *$, which contradicts $i \in \setzero$. So by Lemma \ref{pao} \eqref{monotone} and the induction hypothesis we have $\last^t(i) > \last^t(i+k(i)) \geq \tilde T(i+k(i)) - 1$. Thus $\last^t(i) \geq \tilde T(i+k(i))$ and it makes sense to talk about $y^{\last^t(i)}$.
	
	In order to prove $\tilde T(i) = \last^t(i) + 1$, it suffices to check
	\begin{equation} \label{increase}
		y^{\last^t(i)+1} > y^{\last^t(i)}
	\end{equation}
	and for any $t' \in [\last^t(i)+1, t)$,
	\begin{equation}
		\label{constant}
		y^{t'+1}(i) = y^{t'}(i).
	\end{equation}
	Since $Q^{\last^t(i)}$ visits $i$, the inequality \eqref{increase} follows from the definition of counter directly if $i \in \setone \cup \settwo$ or $i \in \setthree$ and $L(\omega^{\last^t(i)}) < i+r(i)$. The remaining case that $i \in \setthree$ and $L(\omega^{\last^t(i)}) \geq i+r(i)$ cannot happen due to the fact that $\last^t(i) > \last^t(i+r(i))$ and Lemma \ref{quejo}.
	For \eqref{constant}, $Q^{t'}$ does not visit $i$ for $t' \in [\last^t(i)+1, t)$, so it follows that $y^{t'+1}(i) = y^{t'}(i)$ for any such $t'$. 
\end{proof}

\begin{proof}[Proof of Lemma \ref{union}]
We finish the proof by checking all requirements on $x$ and $z$. We've checked that $x$ is a good sequence.

We check that $z(i)=y^t(i) \geq 0$ for all $i \in \setone \cup \settwo$. In the proof of Lemma \ref{stopping time}, we've shown $\last^t(i) \geq \tilde T(i+k(i))$,
so $y^t(i) \geq y^{\last^t(i)+1} \geq y^{\last^t(i)} + 1 \geq y^{\tilde T(i+k(i))} + 1 \geq 0$.

To see why $z(i)=y^t(i)$ is odd for $i \in \setone \cup \settwo$, note that by definition $y^{\tilde T(i+1)}(i)=\tilde\omega^{\tilde T(i+1)}(i)-1$. Also $y^t(i)-y^{\tilde T(i+1)}(i)$ has the same parity as $\tilde \omega^t(i) - \tilde\omega^{\tilde T(i+1)}(i)$. Combining these with $\tilde\omega^t(i) = \omega^t(i) = 0$ proves $y^t(i)$ is odd.

Finally, we show $A_{x,z}$ occurs by checking every item of its definition: we've checked item \eqref{well-defined} in the definition of $z$ above; item \eqref{no exit} holds because $t<\tau_{\text{Exit}}$; item \eqref{no visit} follows from the lower bound on $\tilde T(i)$ in Lemma \ref{stopping time}; item \ref{two steps} follows from Lemma \ref{pao} \eqref{adjacent} and Lemma \ref{stopping time}.
\end{proof}

Next we prove Lemma \ref{healthy}.


\begin{proof}[Proof of Lemma \ref{healthy}]
We will estimate the probability inductively. We start with $p=\tilde M(x)$  and then progressively lower it until we get the full result.

For $p=\tilde M(x)$, $\left.x\right|_{[\tilde M(x),a]}$ and $\left.z\right|_{[\tilde M(x),a]}$, define the $j$-th iteration of the counter $y^s(\tilde M(x))$ to be the set of paths $Q^s$ such that $y^{s+1}(\tilde M(x)) = j$ and $L(\omega^s) \geq \tilde M(x)$, for any $j \in [1, z(\tilde M(x)]$. For each iteration of the counter, we may sample an infinite sequence of paths and reveal as many of them as needed. In each iteration we cannot have the first $a-\tilde M(x)+1$ paths all being excursions to their right; otherwise, the leftmost one in the configuration would exceed $a$, which contradicts the definition of $A_{x,z}$ item \ref{no exit}. Since the probability of any path reaching a neighboring block is at most $2/a^4$ (see the proof of Lemma \ref{snack}), the probability of each path being an excursion to the right (including a long excursion to the right, but excluding the double-sided path and the failed re-arrival) is at least $1/2-1/a^4$. Thus we get the following upper bound on the probability of $A_{x,z}$
$$\left(1-(1/2-1/a^4)^{\,- \tilde M(x)+a+1}\right)^{z(\tilde M(x))}.$$

Suppose we have established the upper bound $U\!B_{x,z,p}$ for $p$, $\left.x\right|_{[p,a]}$ and $\left.z\right|_{[p,a]}$. 
Let $p'=\max\{j<p:x(j)=0\}.$ We will establish the bound $U\!B_{x,z,p''}$ for $p''$, $\left.x\right|_{[p'',a]}$ and $\left.z\right|_{[p'',a]}$, where $p'' = p'$ if $p' \in \setone \cup \setthree$ and $p'' = p'-1$ if $p' \in  \settwo$.

If $p' \in \setone$, i.e. $r(p')=1$ and $x(p'-1)\neq*$, then by the definition of $A_{x,z}$ item \ref{no visit} we know that every movement of a particle from $p'$ in the time interval $[\tilde T(p), \tilde T(p'))$ must be to the left. By the definition of the counter, there are $$y^{\tilde T(p')} - y^{\tilde T(p)} = z(p') - (\tilde \omega^s(i)-1) \geq z(p')$$ many such movements. Thus we get an upper bound of $$U\!B_{x,z,p}\,2^{-z(p')}.$$

If $p' \in \settwo$, i.e. $r(p')=1$ and $x(p'-1)=*$, then the same analysis as in the last case $p
' \in \setone$ implies that there are at least $z(p')$ many movements from $p'$ during $[\tilde T(p), \tilde T(p'))$, all of which are to the left. Moreover, by the definition of $A_{x,z}$ item \ref{two steps}, at the last such movement from $p'$ the particle makes an excursion to the left (including a long excursion to the left, but excluding the double-sided path and failed re-arrival) that visits $p'-1$ an even number of times. This happens with probability at most $1/6$. In fact, for a \textit{usual} random walk excursion starting from zero, the probability of going to the left and visiting site $-1$ even times is exactly $1/6$. For our purpose, the random walk respawns at either the left or the right endpoint of the current block after reaching a neighboring block. In either case, a straightforward coupling argument gives the $1/6$ upper bound. Therefore, we obtain a slightly improved bound of $$U\!B_{x,z,p}\,(1/6)(1/2)^{z(p')-1}.$$

If $p' \in \setthreek$, i.e. $r(p') = r$ with $r \geq 2$, then by the definition of $A_{x,z}$ item \ref{no visit}, we know that every time a random walk path starts at a location in $[p', p'+r)$ during $[\tilde T(p), \tilde T(p'))$, the first step either moves to the left or moves to the right without hitting $p'+r$. This has probability at most $1-1/2r$. Also, every time during $[\tilde T(p), \tilde T(p'))$ the particles moves from $p'$ in a path that starts to the left of $p'$, the particle either moves to the left or moves to the right and does not hit $p'+r$ before returning to $p'$, which also has probability at most $1-1/2r$. Combining these with the definition of the counter, we get the bound $$U\!B_{x,z,p}(1-1/2r)^{z(p')}.$$
Putting these together gives us the lemma.
\end{proof}

Finally, we complete the argument by proving Lemma \ref{ugly computation}.

\begin{corollary} \label{cr7}
If $\key$ occurs and $p, x$ are defined as in Lemma \ref{union}, then $x(p)=\,\qm$ and
\begin{equation} \label{lisbon}
\tilde M(x) \geq a - N(\omega^t)-1 \geq a(1-\epsilon)-1,
\end{equation}
\begin{equation} \label{manchester}
|\setone| + 2|\settwo| + 2|\setthree| \geq a - L(\omega^t) - N(\omega^t) - 2 \geq a(1-2\epsilon) - 2,
\end{equation}
\begin{equation} \label{turin}
|\setthree| \leq \sum\nolimits_r (r-1) |\setthreek| \le N(\omega^t) \le a\epsilon,
\end{equation}
\begin{equation}\label{superstition}
	|\setqmit| := |\{j \in [p,a]: x(j) = \qm\}| \leq N(\omega^t)+2 \leq \epsilon a+2.
\end{equation}
\end{corollary}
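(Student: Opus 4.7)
My plan is to verify the five displayed inequalities in turn, using the explicit construction of $x$ from Lemma \ref{union}. The first claim $x(p)=\qm$ is immediate: since $p=L(\omega^t)$ gives $\omega^t(p)=1$ and $\qm$ denotes ``$?$ or $1$,'' the defining case $\omega^t(i)=\qm$ in the construction of $x$ triggers at $i=p$. The geometric core of the remaining claims is the inclusion $\rightzeros^t\cap(\tilde M,i_{\max})\subseteq S$, where $s:=\last^t(i_{\max})$ and $S:=\{i\in\rightzeros^t:\last^t(i)=s\}$. Indeed, if $i\in\rightzeros^t$ satisfies $\tilde M<i<i_{\max}$ and $\last^t(i)=s'>s$, Lemma \ref{quejo} applied to $\tilde M$ gives $L(\omega^{s'})<\tilde M$, so $\range(Q^{s'})$ is an interval stretching from below $\tilde M$ to at least $i>\tilde M$ and hence contains $\tilde M$; this contradicts $\last^t(\tilde M)=s<s'$. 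Combined with Lemma \ref{pao}\eqref{nofour}, which yields $|S|\le 3$, this forces $|\rightzeros^t\cap(\tilde M,a]|\le 2$ and $|\rightzeros^t\cap[\tilde M,a]|\le 3$.

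The bound \eqref{lisbon} is now immediate: $p$ is a nonzero of $\omega^t$ in $[0,\tilde M-1]$, so the nonzeros of $\omega^t$ in $(\tilde M,a]$ number at most $N(\omega^t)-1$, and summing with $|\rightzeros^t\cap(\tilde M,a]|\le 2$ gives $a-\tilde M\le N(\omega^t)+1$. For \eqref{manchester} I count the zeros of $\omega^t$ in $[p+1,\tilde M-1]$ via their image in $x$: each becomes either a $0$ (contributing to $\setone\cup\settwo\cup\setthree$) or a $\ast$, so if $N_\ast$ is the number of $\ast$ symbols in $x$, then $N_\ast+|\setone|+|\settwo|+|\setthree|=|\rightzeros^t\cap[p+1,\tilde M-1]|\ge|\rightzeros^t|-3\ge a-p-N(\omega^t)-2$. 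The key remaining step is the injection of $\ast$'s into $\settwo\cup\setthree$: a $\ast$ at $j$ places $x(j+1)=0$ into $\settwo\cup\setthree$ unless $j+1=\tilde M$, and a $\ast$ at $\tilde M-1$ would require $\omega^t(\tilde M-1)=0$ with $\last^t(\tilde M-1)=s$, putting $\tilde M-1$ into $S$ and contradicting $\tilde M=\min S$. Hence $N_\ast\le|\settwo|+|\setthree|$, and rearranging yields \eqref{manchester}.

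For \eqref{turin} the first inequality is trivial since $r(j)\ge 2$; for the second, $\sum_{j\in\setthree}(r(j)-1)$ equals the total length of the disjoint $\qm$-runs immediately to the right of $\setthree$-elements in $x$ plus a single extra $+1$ absorbed by the exceptional definition of $r$ at $\tilde N$. Since these runs are contained in $[p+1,\tilde M-1]$ and each $\qm$ there corresponds to a nonzero of $\omega^t$ (of which there are at most $N(\omega^t)-1$), the sum is at most $N(\omega^t)$. Finally \eqref{superstition} follows by splitting $[p,a]$ at $\tilde M$: setting $A:=\#\{i\in[p,\tilde M-1]:\omega^t(i)\ne 0\}$ and $C:=\#\{i\in(\tilde M,a]:\omega^t(i)\ne 0\}$, one has $A+C=N(\omega^t)$ (using $\omega^t(\tilde M)=0$) and $|\setqmit|=A+(a-\tilde M)=A+C+|\rightzeros^t\cap(\tilde M,a]|\le N(\omega^t)+2$. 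The main obstacle is the subset inclusion $\rightzeros^t\cap(\tilde M,i_{\max})\subseteq S$; everything else is linear bookkeeping once this density statement is in hand.
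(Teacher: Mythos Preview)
Your proof is correct and follows the same approach as the paper's (very terse) argument, which only sketches \eqref{lisbon} and \eqref{turin} and leaves \eqref{manchester} and \eqref{superstition} to the reader; you have filled in all of the bookkeeping properly, including the injection $j\mapsto j+1$ from $\ast$'s into $\settwo\cup\setthree$ and the exclusion of a $\ast$ at $\tilde M-1$.

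One small simplification: your ``geometric core'' inclusion $\rightzeros^t\cap(\tilde M,i_{\max})\subseteq S$ follows immediately from the monotonicity of $\last^t$ in Lemma~\ref{pao}\eqref{monotone}, since $\last^t(\tilde M)=s=\last^t(i_{\max})$ forces $\last^t(i)=s$ for all $i$ in between; the detour through Lemma~\ref{quejo} is unnecessary (though valid). This is exactly how the paper obtains the ``at most two zeros to the right of $\tilde M$'' count, citing Lemma~\ref{pao}\eqref{monotone}\eqref{nofour}.
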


\begin{proof}
For \eqref{lisbon}, there are only $\qm$'s to the right of $\tilde M(x)$: at most $N(\omega^t)-1$ of them correspond to $?$'s and $1$'s in $\omega^t$, and at most two of them come from zeros in $\omega^t$ by the choice of $\tilde M$ in the definition of $x$ and Lemma \ref{pao} \eqref{monotone}\eqref{nofour}. For \eqref{turin}, notice the convention that $\tilde N(x) \in \setthree$ and $r(\tilde N(x)) = \tilde M(x)+1 - \tilde N(x)$.
\end{proof}


\begin{proof}[Proof of Lemma \ref{ugly computation}]
By Lemma \ref{union}, it suffices to bound
$$\sum_{x \text{ good}}\sum_{z }\p(A_{x,z}),$$
where the sum is taken over all good $x$'s satisfying the bounds in Corollary \ref{cr7} and all $z \in \N^\setzero$ satisfying the parity constraint in Lemma \ref{union}.


By \eqref{lisbon}, the leading term of the product in Lemma \ref{healthy} is at most $$(1-2.1^{-2-a\epsilon})^{z(\tilde M(x))}$$
for sufficiently large $a$.
Summing up over all possible values of $z(\tilde M(x))$ gives us
\begin{equation} \label{house of}
\frac{1}{1-(1-2.1^{-2-a\epsilon})}\leq 5 \cdot 2.1^{a\epsilon}.
\end{equation}
From \eqref{manchester} and \eqref{turin}, we get 
\begin{equation}\label{setone bound}
	|\setone| \geq a(1 - 4\epsilon) - 2|\settwo| - 2.
\end{equation}
Finally, from \eqref{turin} we have
\begin{equation} \label{cards}\prod_r (2r)^{|\setthreek|} \leq 2^{\sum_r r |\setthreek|} \leq 2^{\sum_r 2(r-1) |\setthreek|} \leq 2^{2a\epsilon}.
\end{equation}

Fix $x$ and write $m := |\settwo|$.
By Lemma \ref{healthy} and the distributive property, we have
\begin{eqnarray*}
\lefteqn{ \sum_{z} \p(A_{x,z})} &&\\
&\leq &
5\cdot 2.1^{a\epsilon} 
\prod_{i \in \setone}\left( \sum_{z \geq 1}2^{1-2z}\right)
\prod_{i \in \settwo}\left( \sum_{z \geq 1}1/3\cdot2^{1-2z}\right)
\prod_r\prod_{i \in \setthreek}\left(\sum_{z \geq 0} \left( 1-\frac{1}{2r}\right)^{z}\right)\\
&\leq & 5\cdot  2.1^{a\epsilon} (2/3)^{a(1-4\epsilon)-2m-2}(2/9)^m
\prod_r (2r)^{|\setthreek |}\\
&\leq &15 \cdot 2.1^{a\epsilon} (2/3)^{a(1-4\epsilon)}(1/2)^{m}2^{2a\epsilon}\\
&\leq &15 \cdot 50^{\epsilon a} (2/3)^a (1/2)^{m},
\end{eqnarray*}
where we've used the fact that all $z(i)$'s are odd for $i \in \setone \cup \settwo$ and
equations \eqref{house of}, \eqref{setone bound} and \eqref{cards}.

Note that the bound we just developed only depends on $m=|\settwo|$. For $m \in \N$ let $$\mathcal W(m) := \{x: x \text{ is good, } x(p)=\,\qm, |\settwo|=m \text{ and } |\setqm| \leq \epsilon a +2\}.$$
Write $$S_{n, k} = \sum_{i=0}^k \binom{n}{i}.$$
Also note that by the binomial theorem, for $a \geq 2m$ and $\lambda\geq0$ we have
\begin{equation} \label{madrid}
\binom{a-m}{m} (1/2)^m\lambda^{a-2m} \le (\lambda+1/2)^{a-m}.
\end{equation}
Then by \eqref{superstition} we have
\begin{eqnarray*}
\sum_{x \text{ good}}\sum_{z }\p(A_{x,z}) &=&\sum_m \sum_{x \in \mathcal W(m)}\sum_{z}\p(A_{x,z})\\
&\leq & \sum_m a S_{a, \epsilon a+1} \binom{a-m}{m} 2^{\epsilon a} \cdot 15 \cdot 50^{\epsilon a} (2/3)^a (1/2)^{m}\\
&=& 10^{2\epsilon a} S_{a, \epsilon a} \cdot 15a^2(2/3)^a \sum_m\binom{a-m}{m} (1/2)^m\\
&\leq & 10^{2\epsilon a} S_{a, \epsilon a} \cdot 15a^2(2/3)^a \sum_m (7/4)^{a-m}(4/5)^{a-2m}\\
&\leq & 10^{2\epsilon a} S_{a, \epsilon a} \cdot 150a^2(14/15)^a,
\end{eqnarray*}
where in the second last inequality, we use \eqref{madrid} by picking the suitable $\lambda=5/4$.

We complete the proof of Lemma \ref{ugly computation} by taking a small enough $\epsilon$. Indeed, recall the Chernoff bound 
$$\log S_{a, \epsilon a} \leq H(\epsilon)a$$
where $H(\cdot)$ is the binary entropy function $H(x) = -x \log x - (1-x) \log(1-x)$. For $\epsilon = 1/200$,
$$\log(14/15) + 2\epsilon \log 10 + H(\epsilon) < -\frac{1}{100},$$
so the above probability is upper bounded by $\exp(-a/100)$ for all $a$ sufficiently large. 
\end{proof}

\section{Single block estimate} \label{sec:renewal}

Using the results of the previous sections, which show that the carpet process $\omega$ is well behaved in terms of the number of ?'s and 1s, we return to the main task of bounding the probability that a particle is frozen and proving Lemma \ref{lem:sbe}.  

Define
$$A =\{L(\omega) \leq \epsilon a  \text{ and } N(\omega)> \epsilon a \}\subset \Omega.$$
Recall
$$\text{Base}=01??\cdots??$$
$$\text{ Exit}=0000\cdots00$$
$$\tau_{\text{Exit}} =\inf\{t\geq 0: \omega^{t}=\text{Exit}\}$$
as well as the definition of an $\epsilon$-Base state.


Our first lemma is obtained by analyzing the bias in the process $N(\omega^t)$. It says that starting with a large enough $N(\omega_0)$, the process is much more likely to return to a state $L(\omega) \leq \epsilon a$ than become frozen.

\begin{lemma} \label{nice computation}
For any state $\omega_0 \in A$ and sufficiently large $a$,
$$\overline{\p}_{\omega_0}(\tau_{\text{Exit}}< a^3 \wedge \inf\{t>0: L(\omega^t)\leq \epsilon a\}) \leq e^{-\frac{1}{3} 10^{-6} \sqrt{a}}.$$
\end{lemma}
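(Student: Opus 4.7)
The plan is to reformulate the event in terms of the boundary process $L(\omega^t)$ and then bound it via a hitting-probability estimate. Since $\omega^t = \text{Exit}$ is equivalent to $L(\omega^t) = a+1$, and since the event in the lemma forces $L(\omega^t) > \epsilon a$ for all $t \in [1, \tau_{\text{Exit}}]$ (otherwise the infimum would occur strictly before $\tau_{\text{Exit}}$), the event is equivalent to asking that $L(\omega^t)$, started from the (forced) value $L(\omega^1) > \epsilon a$, exit the interval $(\epsilon a, a+1]$ through its right endpoint in fewer than $a^3$ steps.

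First I would obtain quantitative control on the one-step increment $L^{t+1} - L^t$ conditional on $\reveal^t$. Using Lemmata \ref{loss} and \ref{loss_ouch} to control the parities produced by the excursion $Q^t$, and Lemma \ref{geo} to control how many ?-values may resolve to $0$ in a single transition (which governs how far $L$ can jump in one step), one obtains a description of $L(\omega^t)$ as a random walk with bounded-moment increments and a (conditionally) controllable behavior in the regime $L > \epsilon a$. Second I would combine this with a hitting-probability estimate---of gambler's-ruin or Azuma--Hoeffding type---to bound the probability that the walk reaches $a+1$ (a distance of order $a$) before returning to $\epsilon a$. The stated $\sqrt{a}$ rate in the exponent arises from balancing the distance $\sim a$ against the per-step fluctuations of $L(\omega^t)$ on the $a^3$-step horizon, so the time cap $t < a^3$ plays only a mild role.

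The main obstacle is pinning down the conditional drift and fluctuations of $L(\omega^t)$ in the regime $L > \epsilon a$. The increment $L^{t+1} - L^t$ depends simultaneously on the random walk $Q^t$ and on the hidden parity configuration $\tilde\omega^t$, and the two interact in a state-dependent way through the refresh-and-reveal procedure of Section \ref{sec:aux}. The essential device is to work in the enriched filtration $\reveal^t$: the ?-marks hide just enough information to retain independence between successive excursions, while exposing enough to describe $L^{t+1} - L^t$. Care must also be taken with corner cases---in particular when $\omega^t$ is far from an $\epsilon$-Base-like shape and a single excursion could anomalously advance $L$ by a large amount---but Lemma \ref{geo} ensures these are rare enough to absorb into a union bound.
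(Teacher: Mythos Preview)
Your approach has a genuine gap. You assert that Lemma \ref{geo} ``governs how far $L$ can jump in one step,'' but this is not correct. Lemma \ref{geo} bounds $N(\omega_\ast^{t+1}) - N(\omega^{t+1})$, i.e.\ the number of ?'s in $\omega_\ast^{t+1}$ that resolve to $0$; it says nothing about the $0$'s that were \emph{already present} in $\omega^t$ to the right of $L^t$. Concretely: after a short right excursion with range $[L^t,E]$, if the handful of ?'s in $(L^t,E]$ all resolve to $0$ and $\omega^t$ happens to carry a long run of pre-existing $0$'s immediately beyond $E$, then $L^{t+1}$ jumps past that entire run in a single step. The size of this jump is governed by the right-zero structure of $\omega^t$---precisely the delicate object analyzed in Section \ref{sec:ugly}---and is invisible to Lemma \ref{geo}. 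This is not a corner case to be absorbed by a union bound: accumulation of such zeros is generic, and the heuristic at the start of Section \ref{sec:aux} explicitly notes that the naive drift picture for $L$ fails (recall $L(\omega^t)=L(\tilde\omega^t)$ by Lemma \ref{graduation}, so passing to the auxiliary process does not help $L$ directly).

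The paper's proof sidesteps this by tracking $N(\omega^t)$ instead of $L(\omega^t)$. The increment $\Delta_tN$ splits as $(N(\omega_\ast^{t+1})-N(\omega^t))+(N(\omega^{t+1})-N(\omega_\ast^{t+1}))$: the first term is bounded below by $(\ell(Q^t)\wedge L^t)-2$, where $\ell(Q^t)$ is the leftward reach of the excursion (heavy $\sim 1/q^2$ tail), and the second dominates $-\mathrm{Geo}(1-\sqrt{5/6})-1$ by Lemma \ref{geo}. Pre-existing zeros are already accounted for in $N$ and cause no uncontrolled jumps. One then shows that, while $L^t>\epsilon a$, the process $N(\omega^t)-N(\omega^0)$ stochastically dominates an explicit random walk $S_t$ with heavy-tailed positives against geometric negatives; a concentration bound (counting, for each $q\le a^{1/4}$, how many of the positive increments exceed $q$) gives $S_t\ge -\epsilon a$ on the horizon $t<a^3$ with the stated probability. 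Since $\tau_{\text{Exit}}$ is exactly $\{N=0\}$ and $N(\omega^0)>\epsilon a$, this suffices.
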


\begin{proof}

Consider the value of $$\Delta_t N \vcentcolon = N(\omega^{t+1}) - N(\omega^t) = (N(\omega^{t+1}_\ast) - N(\omega^t)) + (N(\omega^{t+1}) - N(\omega^{t+1}_\ast)).$$
Let $\ell(Q^t) := L^t - \min(\range(Q^t))$ be the maximum distance reached by the random walk path $Q^t$ to its left. By the definition of the `refresh' step, the first term
$$N(\omega_\ast^{t+1}) - N(\omega^t) \geq (\ell(Q^t) \wedge L^t) -2.$$
When $L^t > \epsilon a$ and $\ell(Q^t)>0$, this is at least $(\ell(Q^t) \wedge \epsilon a) - 2$; otherwise, we have the lower bound $-2$.
Using Lemma \ref{geo}, the second term
$$N(\omega^{t+1}) - N(\omega^{t+1}_\ast)\succ -\text{Geo}(p) - 1,$$
where $p = 1-\sqrt{5/6}$ and $\succ$ is stochastic domination.





Combining both estimates, we obtain that for $t < \inf \{s > 0: L(\omega^s) \leq \epsilon a\}$, $N(\omega^t) - N(\omega^0)$ stochastically dominates a sum of the form 
$$S_t = \sum_{i=1}^{B_{t-1}} Z_i - \sum_{i=1}^{t} (Y_i + 3),$$
where $B_{t-1} \sim $ Binomial$(t-1, 1/2)$, $Y_i \sim $ Geo$(p)$ are iid, and $Z_i$ are iid positive variables with tail $\p(Z = q) \geq \frac{1}{2q^2}$ for $q \in [1, \epsilon a]$. Our aim is to show that for $t < a^3$,  $S_t \geq - \epsilon a$ with high probability, which implies that with the same probability, for $t < a^3 \wedge \inf \{s > 0: L(\omega^s) \leq \epsilon a\}$, $N(\omega^t) > 0$, i.e. $\tau_{\text{Exit}}$ has not occurred.

To prove this, first observe that throwing out the positive sum in $S_t$ and using a tail bound for a sum of Geometric random variables,

\begin{equation} \p(S_{\epsilon a / 100} < - \epsilon a / 2) \leq \p(\sum_{i=1}^{\epsilon a / 100} Y_i > \epsilon a/3) \leq \exp(-\epsilon a / 100). \end{equation}




To see that $N(\omega^t)$ stays away from 0 at later times, we carry out the following concentration bound for the sum of $Z_i$. Observe that by standard tail bounds for Binomials (e.g. a Chernoff bound), for any $q = 1, 2, \ldots, a^{1/4}$ and any $t > \epsilon a / 100$, 

\begin{align} \p\left(\#\{i \leq B_{t-1}: Z_i = q\} \leq \frac{1}{12} t q^{-2}\right) &\leq \p(\text{Bin}(t-1,\, \frac{1}{2} \cdot \frac{1}{2} q^{-2}) \leq \frac{1}{12} t q^{-2}) \\
&\leq \exp(- t q^{-2} / 72) \\
&\leq \exp(- \epsilon \sqrt{a} / 7200). \end{align}
By a union bound over all such $q$, none of these events occurs with probability at least $1 - \exp(-\frac{\sqrt{a}}{2*10^6})$ if $a$ is sufficiently large, and if none of them occurs, we have 

$$\sum_{i=1}^{B_{t-1}} Z_i \geq \sum_{q=1}^{a^{1/4}}  \frac{1}{12} t q^{-1} \geq \frac{1}{50} t \log a.$$

For the negative part of $S_t$, again using a similar tail bound for a sum of Geometrics, for any $t > \epsilon a / 100$, 
$$\p\left(\sum_{i=1}^{t} (Y_i + 3) \geq t(3 + 10 p^{-1})\right) \leq \exp(-a/10^4).$$
Combining these bounds, since we can take sufficiently large $a \geq e^{6000}$ so that $\frac{1}{50} \log(a) > 3 + 10p^{-1}$, we obtain for any $t > \epsilon a / 100$, 
$$\p(S_t < -\epsilon a) \leq \exp(-\frac{1}{2} 10^{-6} \sqrt{a}).$$
Taking a union bound over all values $t \leq a^3$ gives the result.



\end{proof}

The next lemma gives a lower bound on the frequency at which the carpet process $\omega^t$ refreshes by returning to the Base state. 

\begin{lemma} \label{LR 3 zoning}
The following hold for $a$ sufficiently large. If $\omega_0$ satisfies $L(\omega_0) \leq a$, 
\begin{equation}\underline{\p}_{\omega_0}(\omega^1 = \text{Base} \text{ or }\omega^2 = \text{Base}) \geq \frac{1}{600 a^2}. \end{equation} 
Additionally, 
\begin{equation} \overline{\p}_{\omega_0}(\tau_{\text{Base}} > a^3) \leq \exp(-a/2000), \end{equation}
where $\tau_{\text{Base}} =  \inf\{t > 0: \omega^t = \text{Base}\}.$

\end{lemma}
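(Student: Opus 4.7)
Plan. The plan is to prove the first inequality by exhibiting an explicit two-step trajectory from $\omega_0$ to Base with probability at least $1/(600a^2)$, and the second inequality by iterating the first over disjoint two-step windows.

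For the first inequality, the first observation is that Base requires $L(\omega) = 1$ together with $?$'s at positions $2, \ldots, a$. Since the refresh step forces $\omega_\ast^{1}(L_0) = 0 \neq\,?$ for every $L_0 \in [3, a]$, reaching Base in one step is impossible for generic $L_0$, so the argument must go through two steps. The construction I would use is: in Step 1, choose $Q^0$ to be an excursion from $L_0$ that reaches (or nearly reaches) position $1$ and returns to $L_0$ without entering the transit regions; this occurs with probability $\Theta(1/a)$ by standard gambler's-ruin estimates for the SRW, and the parity conditions needed to place $L^{1}$ at a small value (e.g.\ $1$ or $2$) are each met with constant conditional probability by Lemmata \ref{loss} and \ref{loss_ouch}. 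In Step 2, choose $Q^1$ to be another specific excursion from $L^{1}$ that reaches the right boundary $a$ and returns, again with probability $\Theta(1/a)$, with the remaining parity constraints arranged so that $\omega^{2} = \text{Base}$. Multiplying these gives $\Theta(1/a^{2})$, and the explicit parity constants from Lemmata \ref{loss} and \ref{loss_ouch} can be tracked to yield at least $1/(600 a^{2})$.

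The hard part will be uniformity: the construction must work for every initial configuration $\omega_0$ with $L(\omega_0) \leq a$, which requires a careful case split on $L_0$, careful treatment of the corner cases of the refresh rule (when $\min(\range(Q^0)) = L^{0} - 1$ or $\max(\range(Q^0)) = L^{0} + 1$) and of the inspection step, and verification that the bound is uniform over the revealed information $R \in \reveal^{0}$ entering the supremum in $\overline{\p}$. The last point follows from the conditional nature of the estimates in Lemmata \ref{loss} and \ref{loss_ouch}.

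For the second inequality, I would partition $[0, a^{3}]$ into $a^{3}/2$ disjoint two-step windows $[2k, 2k+2]$. By the first inequality and the strong Markov property, for each $k$ with $\omega^{2k} \neq \text{Exit}$ the conditional probability of reaching Base within the $k$-th window is at least $1/(600 a^{2})$. A standard Bernoulli iteration then yields $\overline{\p}_{\omega_{0}}(\tau_{\text{Base}} > a^{3}) \leq (1 - 1/(600 a^{2}))^{a^{3}/2} \leq \exp(-a/1200) \leq \exp(-a/2000)$ for $a$ sufficiently large. The possibility of reaching Exit before Base is absorbed into the iteration by running it up to $\min(\tau_{\text{Base}}, \tau_{\text{Exit}})$ and invoking Lemma \ref{nice computation} (or a small variant of its argument) to control the probability that Exit occurs before Base.
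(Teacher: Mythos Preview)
Your proposal is correct and follows essentially the same approach as the paper: a two-step construction (one long left excursion to bring $L$ down near the left edge, then one long right excursion to refresh the whole block with ?'s), each step costing $\Theta(1/a)$ for the excursion length and a constant for one parity constraint via Lemma~\ref{loss}, followed by iteration over disjoint two-step windows to get the exponential tail. The paper is slightly more specific (it sends the first excursion \emph{beyond} $0$ into the transit region with range in $[a,a^2)$, rather than merely to position $1$), which cleanly guarantees the needed refresh; you should adopt that detail.

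One point to drop: your last sentence about absorbing the Exit case via Lemma~\ref{nice computation} is unnecessary and mildly circular in spirit. The paper's proof of the second inequality simply iterates the first bound over $a^3/2$ pairs of steps without mentioning Exit at all, obtaining $(1-\tfrac{1}{600a^2})^{a^3/2}\le\exp(-a/2000)$; the possibility $\tau_{\text{Exit}}<\tau_{\text{Base}}$ is dealt with separately in the proof of Lemma~\ref{smith}, not here. So just run the iteration as stated and leave Exit alone.
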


\begin{proof}
The second claim follows immediately by repeated application of the  first claim, 
bounding the probability of hitting the Base state independently for every successive pair of times:

$$\overline{\p}_{\omega_0}(\tau_{\text{Base}} > a^3) \leq (1-\frac{1}{600a^2})^{a^3/2} \leq \exp(- a/2000).$$

One way to return to the base state is by first taking an excursion to the left beyond 0 and not matching the parity at 1 -- this makes $L(\omega^1)=1$ -- and then, at the next step, taking an excursion to the right beyond $a$ and not matching the parity at 2. Recall that the probability for a simple random walk excursion to reach distance at least $\ell'$ is $\frac{1}{\ell'}$, so the probability for an excursion to reach distance $\ell' \in [a, a^2)$ is at least $\frac{1}{2a}$. Note that by the choice of $K$, an excursion that reaches maximum distance $\ell' \in [a, a^2)$ visits every point in the block but does not reach any neighboring block. Thus if $L(\omega_0) \geq 2$, then by Lemma \ref{loss} 
$$\underline{\p}_{\omega_0}(\omega^2 = \text{Base}) \geq \left(\frac{1}{2} \cdot \frac{1}{2a} \cdot \frac{1}{6}\right)^2 \geq \frac{1}{600a^2}.$$
If $L(\omega_0) = 1$, only the second step is necessary. 
\end{proof} 

%

Now we are ready to state Lemma \ref{smith} which is the culmination of the previous two sections. It shows that in a typical scenario, the carpet process keeps refreshing itself by returning to the Base rather than become frozen.

\begin{lemma} \label{smith} For $a$ sufficiently large, 
$$\overline{\p}_{\epsilon\text{-Base}}(\tau_{\text{Exit}} < \tau_{Base}) \leq e^{-\frac{1}{4} 10^{-6} \sqrt{a}}$$
\end{lemma}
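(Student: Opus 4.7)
The plan is to decompose $\{\tau_{\text{Exit}}<\tau_{\text{Base}}\}$ and control each piece using Lemmata \ref{ugly computation}, \ref{nice computation}, and \ref{LR 3 zoning}. Setting $T:=a^3$, I would write
\begin{equation*}
\{\tau_{\text{Exit}}<\tau_{\text{Base}}\}\subseteq\{\tau_{\text{Base}}\wedge\tau_{\text{Exit}}>T\}\cup B\cup\bigl(B^c\cap\{\tau_{\text{Exit}}\leq T\}\bigr),
\end{equation*}
where $B$ is the event from Lemma \ref{ugly computation}. The first piece has probability at most $\exp(-a/2000)$ by Lemma \ref{LR 3 zoning} (whose iterative two-step Base-return argument actually controls the stronger quantity $\overline{\p}(\tau_{\text{Base}}\wedge\tau_{\text{Exit}}>T)$); the second has probability at most $\exp(-a/100)$ by Lemma \ref{ugly computation}.

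The main work is the third piece. Any $\epsilon$-Base starting state satisfies $L(\omega^0)\leq\epsilon a$ and $N(\omega^0)\geq a+1-\epsilon a>\epsilon a$, so $\omega^0\in A$. On $B^c\cap\{\tau_{\text{Exit}}\leq T\}$ I would introduce
\begin{equation*}
\sigma:=\sup\{s\in[0,\tau_{\text{Exit}})\colon L(\omega^s)\leq\epsilon a\},
\end{equation*}
which is well defined and nonnegative. On $B^c$, the constraint $L(\omega^\sigma)\leq\epsilon a$ forces $N(\omega^\sigma)>\epsilon a$, so $\omega^\sigma\in A$; maximality of $\sigma$ gives $L(\omega^t)>\epsilon a$ for every $t\in(\sigma,\tau_{\text{Exit}}]$, and $\tau_{\text{Exit}}-\sigma\leq T$. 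Thus from time $\sigma$ the process hits Exit within $a^3$ further steps without returning to $\{L\leq\epsilon a\}$---exactly the event Lemma \ref{nice computation} controls.

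I would then union-bound over the $T+1$ deterministic candidate values $s=0,1,\ldots,T$ for $\sigma$. For each $s$, conditioning on $\omega^s$ and $\reveal^s$ and restarting the auxiliary construction at time $s$ (with $\reveal^s$ playing the role of the initial revealed data $R$ in the $\overline{\p}$ notation, as in the restart technique the paper invokes for Lemma \ref{snack}), Lemma \ref{nice computation} yields a bound of $e^{-\tfrac{1}{3}\cdot 10^{-6}\sqrt{a}}$ uniformly over $\omega^s\in A$. Summing gives
\begin{equation*}
\overline{\p}_{\epsilon\text{-Base}}\bigl(B^c\cap\{\tau_{\text{Exit}}\leq T\}\bigr)\leq(T+1)\,e^{-\tfrac{1}{3}\cdot 10^{-6}\sqrt{a}}.
\end{equation*}
Combining the three bounds, since $3\log a\ll\tfrac{1}{12}\cdot 10^{-6}\sqrt{a}$ for large $a$, the total is dominated by $(a^3+1)\,e^{-\tfrac{1}{3}\cdot 10^{-6}\sqrt{a}}\leq e^{-\tfrac{1}{4}\cdot 10^{-6}\sqrt{a}}$.

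The obstacle I expect to be most delicate is the Markov-like application of Lemma \ref{nice computation} at the random time $\sigma$: because $\omega^t$ is not Markov in its own filtration, this requires a careful restart of the auxiliary construction with accumulated history $\reveal^s$ reinterpreted as initial past-information data, leveraging time-homogeneity of the random walk instruction stacks---precisely what the paper's Lemma \ref{snack} is designed to enable. Once the restart is legitimate, the union-bound factor $T=a^3$ is cheap because the $\sqrt{a}$ in Lemma \ref{nice computation}'s exponent dwarfs $\log T$.
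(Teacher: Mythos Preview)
Your proposal is correct and follows essentially the same approach as the paper: reduce to $\tau_{\text{Exit}}<a^3$ via Lemma \ref{LR 3 zoning}, then split according to whether the event $B$ of Lemma \ref{ugly computation} occurs, and on $B^c$ take the last time $\sigma$ with $L(\omega^\sigma)\leq\epsilon a$ and apply Lemma \ref{nice computation} with a union bound over the at most $a^3$ candidate times. One small remark: your concern about restarting the auxiliary construction is somewhat overstated---no genuine restart (in the sense of Lemma \ref{snack}, which is about failed re-arrivals) is needed here, since the $\overline{\p}$ notation is defined precisely as a supremum over all $R\in\reveal^0$, and time-shifting so that $s$ plays the role of $0$ and $\reveal^s$ plays the role of $\reveal^0$ is exactly what this notation is built to absorb.
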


\begin{proof}
By Lemma \ref{LR 3 zoning}, the process hits the Base in time at most $a^3$ with exponentially high probability. So it suffices to bound the event that $\tau_{\text{Exit}} < a^3$.

If $\tau_{\text{Exit}} < a^3$ occurs, then either
\begin{enumerate}
\item there exists a time $t<\tau_{\text{Exit}}$ with $L(\omega^t) \leq \epsilon a$ and $N(\omega^t) \leq \epsilon a$ or 
\item there exists a time $t < \tau_{\text{Exit}} < a^3$ with $\omega^t \in A$ and $L(\omega^s) > \epsilon a$ for any $s$ such that $t < s < \tau_{\text{Exit}}$. 
\end{enumerate}
By Lemma \ref{ugly computation} the first event occurs with exponentially small probability in $a$.
By Lemma \ref{nice computation} and a union bound over all $t < a^3$, the probability of the second event is exponentially small in $\sqrt{a}$. Combining these estimates, we obtain that the process reaches the Exit after $a^3$ steps with exponentially high probability.
\end{proof}

\subsection{Proof of Lemma \ref{lem:sbe}}

Following \cite{HRR20}, we first re-parameterize the process by the number of \textit{attempted emissions} from block $i$, instead of the number of additional input particles from the right. Each attempted emission, as a portion of the carpet/hole toppling procedure described in Section \ref{sec:carpet_rules}, is defined as the evolution until the hot particle reaches a new block or until the hot particle becomes frozen.

Denote by Left$(k)$ and $F(k)$ the number of particles emitted to the left from block $i$ and the number of frozen particles in block $i$ right after the $k$th attempted emission respectively (a slight abuse of notation). Let $e(0)=0$ and $e(k)$ denote the number of steps taken by the auxiliary carpet process $\omega^t$ until the completion of the $k$th attempted emission. Note that $e(k)$ counts steps of the chain $\omega^t$, not individual topplings in the carpet/hole procedure. 

The following lemma gives a conclusion in the same spirit as Lemma \ref{smith}, but for a process not necessarily started from the Base or $\epsilon$-Base state.




\begin{lemma} \label{snack} The following holds for $a$ sufficiently large. If the initial carpet $(\eta, \omega)$ is valid, for any $k \geq 2$, almost surely,
\begin{equation} \p_{(\eta, \omega)}(F(k) = 1 | \mathcal{F}_{i-1}) \leq 8a^{-1}. \end{equation}
\end{lemma}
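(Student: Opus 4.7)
My plan for proving Lemma \ref{snack} is to combine the renewal-type control provided by Lemmas \ref{smith} and \ref{LR 3 zoning} with the crucial observation that $k \geq 2$ guarantees one previous successful emission, which has a strong regularizing effect on the carpet state. The goal is to show that after the $(k-1)$st emission the configuration is close to an $\epsilon$-Base state with overwhelming probability, and that from such a state the probability of freezing before the next successful emission is of order $a \cdot e^{-c\sqrt a}$, much smaller than $8/a$.

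First I would analyze the state of the auxiliary carpet process $\omega^t$ immediately after the $(k-1)$st successful emission. A successful emission is a step of $\omega^t$ whose random walk path $Q^{e(k-1)-1}$ reaches a neighboring block, hence its range covers essentially the whole block. In the refresh step of the $\omega^t$ dynamics, almost all sites in the block are reset to ?, and the new leftmost-1 position $L^{e(k-1)}$ is determined by the true parity configuration $\tilde\omega^{e(k-1)}$. A union bound using Lemma \ref{loss} shows that $L^{e(k-1)} \leq \epsilon a$ except on an event of probability at most $(5/6)^{\epsilon a}$, which is negligible; so $\omega^{e(k-1)}$ is an $\epsilon$-Base state with overwhelming probability. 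If the $(k-1)$st emission is a failed re-arrival (probability $O(1/K) = O(a^{-4})$, as in the computation in the proof of Lemma \ref{LR 3 zoning}), I would reinitialize $\omega^t$ at the ensuing stopping time by setting $\omega := \tilde\omega$ and use Lemma \ref{LR 3 zoning} to reach Base within $a^3$ further steps, except on an exponentially unlikely event.

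From an $\epsilon$-Base state, I would then apply Lemma \ref{smith} in a renewal fashion. Each Base-to-Base cycle has length at most $a^3$ with probability $1 - e^{-a/2000}$ by Lemma \ref{LR 3 zoning}, and contributes at most $e^{-\frac{1}{4}\cdot 10^{-6}\sqrt a}$ to the cumulative probability of reaching Exit. Meanwhile, by a direct gambler's-ruin computation at the Base state (where $L^t=1$), every step initiated from Base has probability at least $\frac{1}{2(K-1)}$ of being a long-excursion emission, so the expected number of Base-to-Base cycles until the $k$th successful emission is $O(a)$. A union bound over these cycles yields $\p(F(k)=1 \mid \mathcal{F}_{i-1}) \leq O(a)\cdot e^{-c\sqrt a} + O(a^{-4})$, comfortably below $8/a$ for $a$ sufficiently large.

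The principal obstacle will be verifying the first step, namely that the post-emission state is indeed close enough to $\epsilon$-Base. In particular, one must check that $L^{e(k-1)-1}$ just before the emission was itself small, so that the refresh covers enough of the left portion of the block (otherwise the range of the long excursion misses positions there and $L^{e(k-1)}$ could be large). I would handle this either inductively across emissions, noting that each long-excursion refresh pulls $L$ back toward $0$, or by bypassing the $\epsilon$-Base reduction entirely and instead using Lemma \ref{LR 3 zoning} to reach the Base state from any post-emission configuration with $L \leq a$ within $a^3$ steps, then chaining Lemma \ref{smith} from Base onward. Correctly bookkeeping the reinitialization of $\omega^t$ around failed re-arrivals, and ensuring that the $8/a$ bound absorbs all the small rare-event contributions in addition to the dominant $a \cdot e^{-c\sqrt a}$ term, is the main remaining technicality.
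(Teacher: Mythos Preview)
Your overall strategy—restart the auxiliary carpet process near the $(k-1)$st emission, reach an $\epsilon$-Base state, and chain Lemma~\ref{smith} over Base-to-Base cycles—is indeed the engine of the paper's proof, but the proposal as written has two genuine gaps.

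First, you implicitly assume that the $(k-1)$st \emph{attempted} emission was a successful emission. It need not be: by definition an attempted emission can end in freezing, i.e.\ $F(k-1)=1$. The paper handles this via a three-case split on the pair $(F(k-2),F(k-1))$. When $F(k-1)=1$ (Case~III) one uses \ref{pro:frozen_run} and Lemma~\ref{loss_ouch} directly; when $F(k-2)=1$ (Case~II) the $(k-1)$st emission is forced to traverse the entire block, which is exactly what makes your ``post-emission state is $\epsilon$-Base'' claim work. In the remaining Case~I ($F(k-2)=F(k-1)=0$) your claim is \emph{not} justified: if the emission is a long excursion to the right from a large $L^{e(k-1)-1}$, the range misses $[0,L^{e(k-1)-1})$ entirely, and the resulting $\omega^{e(k-1)}$ begins with $L^{e(k-1)-1}$ zeros and is nowhere near $\epsilon$-Base. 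Your proposed inductive fix would have to control $L^t$ at emission times uniformly, which is essentially as hard as the original problem.

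Second, your fallback option (b)—restart at $e(k-1)$ and use Lemma~\ref{LR 3 zoning} to reach Base within $a^3$ steps—does not by itself bound $\p(\tau_{\text{Exit}}<\tau_{\text{Base}})$ from an arbitrary state; that is precisely the content of Lemma~\ref{smith}, which requires an $\epsilon$-Base start. The paper's resolution in Case~I is to restart one emission earlier, at $e(k-2)$: on the event $\{F(k-1)=0,\ e(k-1)-e(k-2)\geq a^3\}$, Exit cannot occur in $[e(k-2),e(k-2)+a^3]$ (it would force $F(k-1)=1$), so Lemma~\ref{LR 3 zoning} produces a Base visit before Exit and Lemma~\ref{smith} takes over. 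The cost is the probability $\p(e(k-1)-e(k-2)<a^3)\leq 2a^{-1}$, and this is the \emph{dominant} term in the final $8a^{-1}$ bound. Your asserted estimate $O(a)\,e^{-c\sqrt a}+O(a^{-4})$ is therefore too optimistic: you have not accounted for the event that the next attempted emission completes before the process has had time to regularize.
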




\begin{proof}
We consider three cases.

\textbf{Case I}: $F(k-2) = 0$ and $F(k-1) = 0$.
Note that the probability of each attempted emission being followed by a failed re-arrival is at most $a/K=1/a^{3}$. In the following, we also assume that $Q^{e(k-2)-1}$ and $Q^{e(k-1)-1}$ are not failed re-arrivals.

Under those assumptions, during the $k-1$th and $k$th attempted emissions, the carpet process keeps performing random walk excursions from the hole, until one such excursion reaches a neighboring block.
Recall that the probability that a random walk excursion reaches maximum distance at least $l$ is $\frac{1}{l}$, and the distance from any point in block $i$ to a neighboring block is at most $K$ and at least $K/2$. Thus each excursion of the hot particle in block $i$ has probability at least $a^{-4}$ and at most $2 a^{-4}$ of reaching a neighboring block, uniformly over the initial position of the excursion (inside block $i$). Elementary estimates give that
\begin{equation} \label{diversey}
	\overline{\p}_{\omega_0}(e(k-1) - e(k-2) < a^3) \leq {a^3}\cdot 2a^{-4} \le 2a^{-1}
\end{equation}
and
\begin{align}
	\overline{\p}_{\omega_0}(e(k) - e(k-2) > 2a^5) &\leq (1-a^{-4})^{2a^5} + 2a(1-a^{-4})^{2a^5-1}\\
	& \le a^{-1}.
\end{align}


Following the comment in Section \ref{sec:aux}, we consider the auxiliary carpet process started at $e(k-2)$, and due to the assumption on $Q^{e(k-1)-1}$ all results since Section \ref{sec:carpets} apply until time $e(k)$.
By Lemma \ref{LR 3 zoning}, for any $k \geq 2$ we have
\begin{equation} \overline{\p}_{\omega_0}(\omega^t \neq \text{Base, } t \in [e(k-2), e(k-2) + a^3]) \leq \exp(-a/2000). \end{equation} 
By Lemma \ref{smith} and the previous line, since there are at most $2a^5$ returns to the Base state up to time $2a^5$, for any $k \geq 2$, 
\begin{equation} \overline{\p}_{\omega_0}(\omega^t = \text{Exit for some } t \in [e(k-2)+a^3, e(k-2) + 2a^5]) \leq 2a^5 \exp(-\frac{1}{4}10^{-6} \sqrt{a}). \end{equation}
Combining all these estimates and taking $a$ sufficiently large gives an upper bound $4a^{-1}$ in the first case.

\textbf{Case II}: $F(k-2) = 1$. By \ref{pro:frozen_run}, we have $e(k-1)=e(k-2)+1$, cf. \eqref{diversey}, and in the $(k-1)$th attempted emission, we topple the hot particle started from $0$ or $a$ until it reaches a neighboring block. Since we've chosen $K = a^4$, the probability that the hot particle reaches a neighboring block without visiting the entire block $[0,a]$ is at most $a/K=1/a^{3}$: it must arrive at one endpoint of $[0,a]$ first, and then reach a neighboring block without touching the other endpoint. By Lemma \ref{loss_ouch} for type 1 paths,
\begin{equation}
\overline{\p}_{\omega_0}(\omega^{e(k-1)} \text{ is not } \epsilon\text{-Base} \mid [0,a] \subset \text{Range}(Q^{e(k-1)-1})) \leq (5/6)^{\epsilon a} \leq \exp(-a/1200).
\end{equation}
Moreover, a similar argument using an upper tail bound on $e(k)-e(k-1)$ and Lemma \ref{smith} as in Case I shows that
\begin{equation}
\overline{\p}_{\omega_0}(F(k)=1 \mid \omega^{e(k-1)} \text{ is }\epsilon\text{-Base}) \leq a^5\exp{\frac{1}{4}10^{-6}\sqrt{a}}.	
\end{equation}
Combining all these, we obtain an upper bound $2a^{-1}$ for sufficiently large $a$ in the second case.

\textbf{Case III}: $F(k-1) = 1$. By \ref{pro:frozen_run} and Lemma \ref{loss_ouch}, with probability at least $1-2/a^3$ we have $\omega^{e(k)}$ is not an all-zero configuration and thus $F(k)=0$.

Taking a union bound over the three cases completes the proof.
\end{proof}

\begin{proof}[Proof of Lemma \ref{lem:sbe}] We use Lemma \ref{snack} to prove the single block estimate. We will focus on the first statement because the second one will be shown in \eqref{depot} as an intermediate step. Fix an $l \geq 0$, and block $i \in \{0, \ldots, n-1\}$. Our aim is to uniformly bound the expression
\begin{equation} \E\left[\sum_{s \geq 2} e^{c F_i^i(s)} 1\{L_i^i(s) = l\} \middle| \mathcal{F}_{i-1}\right].\end{equation}
Note that each particle added at site $iK+a$ causes at least one attempted emission in block $i$ (possibly more if additional particles arrive from the left of block $i$ as a result). Thus the sum is upper bounded by
\begin{equation} \widetilde{\E}\left[\sum_{k \geq 2} e^{c F(k)} 1\{\text{Left}(k) = l\}\right], \end{equation}
where we use $\widetilde{\E}$ for the conditional expectation w.r.t. $\mathcal F_{i-1}$. Set
\begin{equation} \tau_l = \inf\{k \geq 0: \text{Left}(k) = l\}, \end{equation}
and re-write the latter sum (over $k \geq 2$) as 
\begin{equation}\label{lugless}
 \widetilde{\E}\left[\sum_{k = \tau_l \vee 2}^{\tau_{l+1}-1} e^{c F(k)}\right]
= \widetilde{\E}\left[ \sum_{k \geq 0} 1\{\tau_{l+1} - (\tau_l\vee2) > k\} e^{c F((\tau_l\vee2) + k)} \right]. 
 \end{equation}

We claim that for any $k, l \geq 0$,
\begin{equation}\label{depot} \widetilde{\p}(\tau_{l+1} - \tau_l \geq k) \leq (2/3)^{\lfloor k/2 \rfloor}\end{equation}
and
\begin{equation}\label{fedex}
	\widetilde{\E}[e^{cF((\tau_l\vee2)+k)}] \leq 1 + 100e^ca^{-1}(k+\log(a)).
\end{equation}

For \eqref{depot}, note that either the $k+1$st attempted emission is a successful emission, or it is frozen in which case the $k+2$nd attempted emission is emitted to a neighboring block with probability 1 by \ref{pro:frozen_run}. By gambler's ruin, the probability to be emitted to the right in either case is at most $\frac{K}{2K-a} \leq 2/3$. Thus
\begin{equation} \widetilde{\p}(\text{Left}(k+2) = \text{Left}(k)) \leq 2/3. \end{equation}
This proves the first claim.

For \eqref{fedex}, when $l=0$, we have $\tau_l=0$ and the claim follows directly from Lemma \ref{snack}. For $l \geq 2$, note that $\tau_l \geq \tau_{l-2} + 2$. By \eqref{depot},
\begin{equation}
\widetilde{\p}(\tau_l - \tau_{l-2} > k') \leq 2(\frac{2}{3})^{\lfloor k'/4\rfloor}	
\end{equation}
Moreover, Lemma \ref{snack} implies that
\begin{equation}
\widetilde{\p}(F(j) = 1 \text{ for some } j \in [\tau_{l-2}+2, \tau_{l-2}+k'+k]) \leq 8(k+k')a^{-1}.
\end{equation}
Combining these two estimates into a union bound by setting $k'=\lceil 12 \log(a) \rceil$ yields the result. When $l=1$, a similar argument works by replacing $\tau_{l-2}$ with 0. This proves \eqref{fedex}.

Now we turn back to the task of bounding the sum \eqref{lugless}. Putting the above bounds \eqref{depot} and \eqref{fedex} together, we get
\begin{equation} \widetilde{\E}\left[1\{\tau_{l+1} - (\tau_l\vee2) > k\} e^{c F((\tau_l\vee2) + k)}\right] \leq \min \{1 + 100e^ca^{-1}(k+\log(a)), (2/3)^{\lfloor k/2 \rfloor} e^c \}. \end{equation}
Splitting the sum \eqref{lugless} at $k = \lceil 6 \log(a) \rceil$, and using both bounds, the original sum is bounded by 
\begin{equation} c' = 7\log(a) + 2500 e^c \log^2(a) a^{-1} \end{equation}
Finally, we pick $a = e^{1.1c}$ and large enough $c \geq 4\times10^4$ so that the condition $\delta c = 0.0004c > \log c'$ holds, which proves Lemma \ref{lem:sbe} for large enough $a$.

It remains to obtain the explicit values for $a$ and $K$. Note that throughout Sections \ref{sec:ugly} and \ref{sec:renewal}, we've taken $a$ sufficiently large multiple times. Two sharpest constraints, however, are $a \geq e^{6000}$ in the proof of Lemma \ref{nice computation} to achieve the bias of the random walk, and $a \geq e^{4.4 \times 10^4}$ above for Lemma \ref{lem:sbe} to work under a very small $\delta=.0004$. Thus, we conclude that Lemma \ref{lem:sbe} holds for $a\geq e^{4.4 \times 10^4}$ and $K = a^4 \geq e^{1.76\times 10^5}.$
\end{proof}

\bibliographystyle{plain}
\bibliography{leo}
\end{document}